\documentclass[]{amsart}

\addtolength{\textheight}{0.5truein}
\addtolength{\voffset}{-0.15truein}
\addtolength{\textwidth}{0.6truein}
\addtolength{\hoffset}{-0.3truein}

\usepackage{amsmath}
\usepackage{amssymb}
\usepackage{amscd}
\usepackage{appendix}
\usepackage{amsthm}
\usepackage{amsfonts}
\usepackage[all]{xy}
\usepackage{array}
\usepackage{hyperref}

%%%%%%%%%   Defining theorem  environments %%%%%%%%%

\theoremstyle{plain}
\newtheorem{thm}{Theorem}[section]
\newtheorem{lem}[thm]{Lemma}
\newtheorem{prop}[thm]{Proposition}
\newtheorem{cor}[thm]{Corollary}

\theoremstyle{definition}
\newtheorem{defn}[thm]{Definition}

\theoremstyle{remark}
\newtheorem{rem}[thm]{Remark}

%%%%%%%    Setting of Contents   %%%%%%%%

%%%%%%%%%   Bold face symbols   %%%%%%%%%

\renewcommand{\AA}{\mathbb{A}}
\newcommand{\FF}{{\mathbb F}}

\newcommand{\ZZ}{{\mathbb Z}}

\newcommand{\RR}{{\mathbb R}}
\newcommand{\CC}{{\mathbb C}}

\newcommand{\GG}{{\mathbb G}}

%%%%%%%%%  frak and cal  %%%%%%%%%

\newcommand{\dfk}{\mathfrak{d}}
\newcommand{\mfk}{\mathfrak{m}}
\newcommand{\nfk}{\mathfrak{n}}
\newcommand{\pfk}{\mathfrak{p}}

\newcommand{\yfk}{\mathfrak{y}}

\newcommand{\Afk}{\mathfrak{A}}

\newcommand{\Cfk}{\mathfrak{C}}
\newcommand{\Dfk}{\mathfrak{D}}

\newcommand{\Ifk}{\mathfrak{I}}

\newcommand{\Nfk}{\mathfrak{N}}
\newcommand{\Pfk}{\mathfrak{P}}

\newcommand{\Ecal}{\mathcal{E}}
\newcommand{\Fcal}{\mathcal{F}}

\newcommand{\Mcal}{\mathcal{M}}

\newcommand{\Ocal}{\mathcal{O}}

\newcommand{\Ccal}{\mathcal{C}}
\newcommand{\Dcal}{\mathcal{D}}

\newcommand{\Kcal}{\mathcal{K}}
\newcommand{\Xcal}{\mathcal{X}}
\newcommand{\Zcal}{\mathcal{Z}}

%%%%%%%%%    Log like operators %%%%%%%%%

\newcommand{\Mat}{\operatorname{Mat}}

\newcommand{\ord}{\operatorname{ord}}

\newcommand{\GL}{\operatorname{GL}}
\newcommand{\SL}{\operatorname{SL}}

\newcommand{\Spp}{\operatorname{Sp}}
\newcommand{\Oo}{\operatorname{O}}

\newcommand{\Hom}{\operatorname{Hom}}

\newcommand{\Pic}{\operatorname{Pic}}

\newcommand{\End}{\operatorname{End}}
\newcommand{\Sym}{\operatorname{Sym}}
\newcommand{\re}{\operatorname{Re}}
\newcommand{\Sp}{\operatorname{sp}}

\newcommand{\Hasse}{\operatorname{Hasse}}
\newcommand{\Diff}{\operatorname{Diff}}
\newcommand{\Spec}{\operatorname{Spec}}
\newcommand{\Mor}{\operatorname{Mor}}
\newcommand{\Sch}{\operatorname{\it Sch}}
\newcommand{\SP}{\operatorname{SP}}
\newcommand{\Char}{\operatorname{char}}
\newcommand{\pr}{\operatorname{pr}}
\newcommand{\Ob}{\operatorname{Ob}}

\linespread{1.05}

\numberwithin{equation}{section}

\begin{document}

\title{On derivatives of Siegel-Eisenstein series over global function fields}
\author[Fu-Tsun Wei]{Fu-Tsun Wei}
\address{Institute of Mathematics, Academia Sinica, Taiwan}
\email{ftwei@math.sinica.edu.tw}

\subjclass[2010]{11M36, 11G09,11R58}
\keywords{Function field, Eisenstein series, Drinfeld modules}

\begin{abstract}
The aim of this article is to study the derivative of \lq\lq incoherent\rq\rq\ Siegel-Eisenstein series on symplectic groups over function fields. By the Siegel-Weil formula for \lq\lq coherent\rq\rq\ Siegel-Eisenstein series, we can relate the non-singular Fourier coefficients of the derivative in question to the arithmetic of quadratic forms. Restricting to the special case when the incoherent quadratic space has dimension $2$, we explicitly compute all the Fourier coefficients, and connect the derivative with the special cycles on the coarse moduli schemes of rank $2$ Drinfeld modules with \lq\lq complex multiplication.\rq\rq 
\end{abstract}

\maketitle

\section*{Introduction}\label{Intro}

The Siegel-Eisenstein series on symplectic groups are closely related to the arithmetic of quadratic forms.
By the Siegel-Weil formula (cf.\ \cite{We2}, \cite{KR2}, and \cite{Wei2}), certain special values of \lq\lq coherent\rq\rq\ Siegel-Eisenstein series are expressed in terms of the theta series associated with coherent quadratic spaces. This can be applied to the special values of automorphic $L$-functions via integral representations, e.g.\ Rankin-Selberg method or the Garrett-type representation for triple product $L$-functions (cf.\ \cite{Zha}, \cite{G-K}, and \cite{Wei1}). Using the Siegel-Weil formula, the central critical derivatives of \lq\lq incoherent\rq\rq\ Siegel-Eisenstein series can be also understood by theta series. Furthermore, in the number field case, the work of Kudla, Rapoport, and Yang (cf.\ \cite{Kud}, \cite{Kud2}, and \cite{KRY}) provides evidences of the relations between the derivatives in question and arithmetic cycles on Shimura varieties of orthogonal type. More precisely, the non-singular Fourier coefficients of such derivatives are essentially the \lq\lq degree\rq\rq\ of the corresponding cycles. This observation can be viewed as an analogue of the Siegel-Weil formula for the derivatives, which is a key ingredient in the work of Yuan-Zhang-Zhang \cite{YZZ} on the central critical derivatives of triple product $L$-functions. The aim of this article is to study the derivatives of incoherent Siegel-Eisenstein series over function fields, and connect the non-singular Fourier coefficients with arithmetic cycles on the moduli space of Drinfeld modules in a simple case. The result obtained in this paper provides an evidence in the function field setting of the phenomenon first observed by Kudla.\\

Let $k$ be a global function field with constant field $\FF_q$. Suppose $q$ is odd. Take a positive odd integer $n$ and let $\Ccal = \{\Ccal_v\}_v$ be an \text{\it incoherent} quadratic space over $k$ (defined in Section~\ref{Preli.Coh}) with dimension $n+1$. For each Schwartz function $\varphi$ on $\Ccal^n$,
the associated Siegel-Eisenstein series $E(\cdot, s,\Phi_{\varphi})$ on the symplectic group $\Spp_n(\AA_k)$ (where $\AA_k$ is the adele ring of $k$) always vanishes at the central critical point $s=0$ (cf.\ Theorem~\ref{thm: Sie-Eis.2}). To explore the central critical derivative, we first restrict ourselves to the special case when $n = 1$.
Then the associated quadratic character $\chi_{\Ccal}$ of $\Ccal$ on $k^{\times} \backslash \AA_k^{\times}$ must be non-trivial. Let $K$ be the quadratic extension of $k$ corresponding to the kernel of $\chi_{\Ccal}$ via class field theory. We take a place $\infty$ of $k$ not split in $K$ and $\alpha \in k^{\times}$ such that $\Ccal_v$ is isomorphic to $(K_v, \alpha \cdot N_{K/k})$ for every place $v \neq \infty$. Here $K_v = K \otimes_k k_v$ and $k_v$ is the completion of $k$ at $v$; $N_{K/k}$ is the norm form of $K$ over $k$. 
Choose a particular Schwartz function $\varphi^{(\alpha)} \in S(\Ccal(\AA_k))$ so that the associated \lq\lq Siegel section\rq\rq\ $\Phi_{\varphi^{(\alpha)}}$ is a \lq\lq new\rq\rq\ vector in the space $R_1(\Ccal)$ consisting all of the sections $\Phi_{\varphi}$ for $\varphi \in S(\Ccal(\AA_k))$ (see (\ref{eqn: Examp.1}) for the precise definition of $\varphi^{(\alpha)}$).
We are interested in the central critical derivative
$$\eta^{(\alpha)}:= \frac{\partial}{\partial s} \widetilde{E}(\cdot,s,\Phi_{\varphi^{(\alpha)}}) \Big|_{s=0},$$
where $\widetilde{E}(\cdot,s,\Phi_{\varphi^{(\alpha)}})$ is the Siegel-Eisenstein series associated with $\varphi^{(\alpha)}$ modified by the Hecke $L$-function of $\chi_{\Ccal}$ (see (\ref{eqn: Examp.2})).
Via Rankin-Selberg method, $\widetilde{E}(\cdot,s,\Phi_{\varphi^{(\alpha)}})$ shows up in the study of Rankin-type $L$-functions associated with \lq\lq Drinfeld type\rq\rq\ automorphic forms on $\GL_2(\AA_k)$ (cf.\ \cite{CWY}). This is our motivation to first target at this particular function $\eta^{(\alpha)}$.
For each $\beta \in k^{\times}$, we show that the $\beta$-th Fourier coefficients of $\eta^{(\alpha)}$ are expressed by representation numbers of $\beta$ as the corresponding norm forms (cf.\ Proposition~\ref{prop: Examp.9}).
In particular, these non-zero Fourier coefficients of $\eta^{(\alpha)}$ are related to special cycles on the \lq\lq compactification\rq\rq\ $\Xcal_{O_K}$ of the coarse moduli scheme of rank one Drinfeld $O_K$-modules (where $O_K$ is the ring of functions in $K$ regular outside $\infty$). To be more concrete, the main result of this paper is in the following.

\begin{thm}\label{thm: Intro.1}
\text{\rm (cf.\ Theorem~\ref{thm: Alg-geo.13})}
For each $y \in \AA_k^{\times}$ and $\beta \neq 0$, there exists an algebraic $0$-cycle $\mathbf{z}(y,\beta)$ on $\Xcal_{O_K}$ such that the  Fourier coefficient $\eta_\beta^{(\alpha),*}(y)$ is equal to:
%of $\eta^{(\alpha)}(\begin{pmatrix}y&0\\0&y^{-1}\end{pmatrix})$ is equal to
$$- \frac{\chi_{\Ccal}(y)|y|_{\AA_k}}{f_{\infty} \cdot \#\Pic(A)} \cdot \deg \mathbf{z}(y,\beta),$$
where $f_{\infty}$ is the residue degree of $\infty$ in $K/k$, and $A$ is the ring of functions in $k$ regular outside $\infty$.
\end{thm}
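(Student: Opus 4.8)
The plan is to match the two sides Fourier coefficient by Fourier coefficient and reduce everything to the explicit formulas already obtained in Proposition~\ref{prop: Examp.9}. First I would recall from that proposition that, for $\beta \neq 0$, the coefficient $\eta_\beta^{(\alpha),*}(y)$ is, up to the normalizing factor $-\chi_{\Ccal}(y)|y|_{\AA_k}$, a sum of local quantities: at the places $v \neq \infty$ the local Whittaker function of the ``coherent'' section contributes the representation number of $\beta$ by $(\alpha N_{K/k})$ over $k_v$, while at $\infty$ (where the space is incoherent) the derivative of the local Whittaker function produces a weighted count with an extra logarithmic/degree factor. The product of the finite local representation numbers is, by the classical correspondence between ideals of $O_K$ and ideles, exactly the number of integral ideals of $O_K$ of the appropriate norm determined by $\beta$ and $y$; this is where the ring $A$, its class number $\#\Pic(A)$, and the residue degree $f_\infty$ enter as the bookkeeping constants relating idele-class counts to ideal counts on $\Spec O_K$.

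Next I would construct the $0$-cycle $\mathbf{z}(y,\beta)$ on the compactified moduli scheme $\Xcal_{O_K}$. The natural definition is as a weighted sum of the CM points on $\Xcal_{O_K}$ attached to rank one Drinfeld $O_K$-modules whose associated ideal class and ``level'' are pinned down by $(y,\beta)$, together with a contribution supported at the cusps coming from the compactification — this cuspidal part is what accounts for the place $\infty$ being non-split and is dictated precisely by the $\infty$-local derivative computed in Proposition~\ref{prop: Examp.9}. Then $\deg \mathbf{z}(y,\beta)$ is computed by counting these points with multiplicity: the count of CM points reproduces the product of finite local representation numbers (via the uniformization of rank one Drinfeld $O_K$-modules by ideals of $O_K$), and the cuspidal multiplicities reproduce the $\infty$-local derivative term. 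Carrying out this identification carefully, the constants $f_\infty$ and $\#\Pic(A)$ appear exactly as in the statement, because $\deg$ on $\Xcal_{O_K}$ is taken relative to $\FF_q$ whereas the natural normalization of the Eisenstein coefficient is relative to the idele class group of $k$.

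The main obstacle, and the step requiring the most care, is the contribution at the bad place $\infty$: one must show that the derivative of the incoherent local Whittaker function at $s=0$ matches, on the nose including the factor $f_\infty$, the intersection-type multiplicity of $\mathbf{z}(y,\beta)$ along the cusps of $\Xcal_{O_K}$. This is the local avatar of the Kudla program phenomenon, and it is delicate because it mixes an analytic derivative (a logarithmic derivative of a local $L$-factor at $\infty$) with the arithmetic geometry of the compactification; the non-split condition on $\infty$ is exactly what makes both sides non-vanishing and forces them to agree. Everything else — the finite places, the global comparison, and the passage from ideles to ideals — is bookkeeping built on Proposition~\ref{prop: Examp.9} and the standard dictionary for rank one Drinfeld modules, so once the $\infty$-local identity is established the theorem follows by assembling the local pieces into the stated global formula.
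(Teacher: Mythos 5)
There is a genuine gap, and it starts with where you locate the incoherence. For a fixed $\beta\neq 0$ the Fourier coefficient $\eta^{(\alpha),*}_\beta(y)$ vanishes to first order at the \emph{unique} place $v_0$ with $\Diff(\beta,\Ccal_K^{(\alpha)})=\{v_0\}$, and this place depends on $\beta$: it is the place where the local space $\Ccal_{K,v_0}^{(\alpha)}$ fails to represent $\beta$, and for most $\beta$ it is a \emph{finite} place non-split in $K$ with $\chi_{K,v_0}(\beta/\alpha)=-1$, not the place $\infty$. Your plan takes the derivative of the local Whittaker function at $\infty$ for every $\beta$ and accordingly puts the delicate part of the cycle at the boundary $\Xcal_{O_K}\setminus\mathbf{M}_{O_K}$; in the paper the boundary divisor only occurs in the exceptional case $\Diff(\beta,\Ccal_K^{(\alpha)})=\{\infty\}$ (and there it is essentially defined by an explicit rational coefficient, via Remark~\ref{rem: Examp.10}(2)), while in the generic case $v_0\neq\infty$ the cycle is supported entirely in the fiber of $\mathbf{M}_{O_K}=\Spec(O_{H_{O_K}})$ over the characteristic $v_0$, at supersingular reductions, with no boundary contribution at all. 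A cycle built as ``CM points plus a cuspidal part dictated by the $\infty$-local derivative'' would therefore not even have the right support, let alone the right degree.

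The second, related omission is the actual geometric mechanism that matches the derivative. In the paper the $0$-cycle is $\mathbf{Z}(\Ifk_y,\beta)$, the moduli space of rank one Drinfeld $O_K$-modules equipped with a special $\Dcal_\alpha$-morphism $b$ satisfying $\tilde b^2=\phi_{-\beta}$. Its degree splits into two factors that must separately match the two factors of $\eta^{(\alpha),*}_\beta(y)$ in Proposition~\ref{prop: Examp.9}: (i) the point count, which by Remark~\ref{rem: Alg-geo.6} is a count of elements of quaternionic lattices $\widetilde R_{\Afk}\Ifk_y\cap K\tilde\jmath$ squaring to $-\beta$ at supersingular points, identified with the norm-one counts $\#\{x\in\Afk\bar\Afk^{-1}\yfk^{-1}\Dfk_\beta^{-1}: N_{K/k}(x)=1\}$ through the algebraic dictionary of Remark~\ref{rem: Examp.10}(1); and (ii) the multiplicity, i.e.\ the length of the local ring of $\mathbf{Z}(\Ifk_y,\beta)$ at each point, computed by deformation theory (Gross's quasi-canonical liftings, Proposition~\ref{prop: Alg-geo.9}), which produces exactly the arithmetic factor $\ord_{v_0}(y_{v_0}^2\beta)+\delta_{v_0}+1$ coming from $W'_{v_0}(0,\Phi_{v_0,\varphi^{(\alpha)}_{v_0}})$. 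Your proposal contains neither the special-morphism moduli problem nor any multiplicity/length computation, so the ``weighted count'' on the analytic side has no counterpart on the geometric side; the step you call bookkeeping (the finite places) and the step you call the main obstacle (matching at $\infty$) are interchanged relative to where the real content lies, and the argument as outlined cannot be completed without supplying the finite-place construction and the deformation-theoretic multiplicity identity.
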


The above theorem can be viewed as a function field analogue of Kudla-Rapoport-Yang's result in \cite{KRY}.
The moduli problem of Drinfeld modules are recalled  in Section~\ref{sec: Alg-geo.1}, and we refer the readers to \cite{Dri} and \cite{Lau} for further details.
%Note that rank one Drinfeld $O_F$-modules can be viewed as analogue of elliptic curves over number fields with complex multiplication by $O_K$, where $O_K$ is the ring of integers of a given imaginary quadratic field $K$ over $\QQ$. 
The algebraic cycle $\mathbf{z}(y,\beta)$ is constructed via \lq\lq special morphisms\rq\rq\ defined in the following. Let $(L,\phi)$ and $(L',\phi')$ be rank one Drinfeld $O_K$-modules over a scheme $S$. Then $\phi$ (resp.\ $\phi'$) induces a left action of $\Mat_2(O_K)$ on $L^{\oplus 2}$ (resp.\ $L^{\prime \oplus 2}$). Let $\Dcal_{\alpha}$ be the quaternion algebra $K + Kj_{\alpha}$ over $k$ where $j_{\alpha}^2 = -\alpha$ and $j_{\alpha} a = \bar{a} j_{\alpha}$ (here $(a\mapsto \bar{a})$ is the non-trivial automorphism on $K$ over $k$).
The reduced norm form on $\Dcal_{\alpha}$ can be expressed by $N_{K/k}\oplus (\alpha \cdot N_{K/k})$. %Moreover, $\Dcal_{\alpha}$ is ramified precisely at the places $v$ of $k$ where $v \neq \infty$ and $\Hasse_v(\Ccal_v) = -1$.
We fix a $K$-algebra isomorphism $K\otimes_k \Dcal_{\alpha} \cong \Mat_2(K)$ defined by:
$$ 
\begin{tabular}{rcll}
$a \otimes 1$ & $\longmapsto$ & $\begin{pmatrix} a & 0 \\ 0 & a \end{pmatrix}$, & $\forall a \in K$, \\
$ 1 \otimes (a_1 + a_2 j_{\alpha})$ & $\longmapsto$ & $\begin{pmatrix} a_1 & a_2 \\ -\alpha \overline{a_2} & \overline{a_1} \end{pmatrix}$, & $\forall a_1 + a_2 j_{\alpha} \in \Dcal_{\alpha}$.
\end{tabular}$$
A \text{\it special morphism} $f : (L^{\oplus 2},\phi) \rightarrow (L^{\prime \oplus 2},\phi')$ is a morphism from $L^{\oplus 2}$ to $L^{\prime \oplus 2}$ (as group schemes over $S$) satisfying
$$ f \phi_{1\otimes d} = \phi'_{1\otimes d} f \quad \text{and} \quad f \phi_{a\otimes 1} = \phi'_{\overline{a}\otimes 1} f, \quad \forall a \in O_K \text{ and } d \in O_{\Dcal_{\alpha}} := \Dcal_{\alpha} \cap \Mat_2(O_K).$$

When $\Dcal_{\alpha}$ splits at $\infty$ (i.e.\ $\Dcal_{\alpha} \otimes_k k_{\infty} \cong \Mat_2(k_{\infty})$), one can associate $(L^{\oplus 2},\phi)$ (resp.\ $(L^{\prime \oplus 2},\phi')$) a $\Dcal_{\alpha}$-elliptic sheave $\Ecal_{\phi}$ (resp.\ $\Ecal_{\phi'}$) with \lq\lq complex multiplication\rq\rq\ by $O_K$ via $\phi$ (resp.\ $\phi'$, cf.\ \cite{LRS} and \cite{Pap}).
Then the set of special morphisms can be identified with
$$\{f \in \Hom(\Ecal_{\phi},\Ecal_{\phi'}) : f \phi_a = \phi_{\overline{a}} f, \quad \forall a \in O_K\}.$$
Here the homomorphisms between $\Dcal_{\alpha}$-elliptic sheaves is allowed to \lq\lq shift the indices\rq\rq\ (cf.\ \cite{Pap} Definition 4.2).
Moreover, when $S = \Spec(\kappa)$ where $\kappa$ is a perfect field,
$(L^{\oplus 2},\phi)$ and $L^{\prime \oplus 2},\phi')$ are Anderson's \lq\lq pure abelian $A$-modules\rq\rq\ (cf.\ \cite{And}) with $O_K \otimes_A O_{\Dcal_{\alpha}}$ multiplication, which can be viewed as analogue of abelian surfaces with quaternion and complex multiplication.
In particular when $\alpha = -1$, we have $\Dcal_{\alpha} \cong \Mat_2(k)$ and the set of special morphisms can be realized by
$$\{f \in \Hom_A\big((L,\phi),(L',\phi')\big) \mid f \phi_{a\otimes 1} = \phi'_{\overline{a}\otimes 1} f, \quad \forall a \in O_K\}.$$

We remark that the constant Fourier coefficient $\eta_0^{(\alpha),*}(y)$ are related to the \lq\lq Taguchi height\rq\rq\ of rank $2$ Drinfeld $A$-modules with \lq\lq complex mulitplication\rq\rq\ by $O_K$. The Taguchi height of a Drinfeld module (cf.\ \cite{Tag} and \cite{Wei3}) is viewed as a natural analogue of the Faltings height of abelian varieties over number fields. Let $\phi$ be a Drinfeld module over $\overline{k}$ of rank $2$ with complex mulitplication by $O_K$. Comparing the formula of $\eta_0^{(\alpha),*}(y)$ in Lemma~\ref{lem: Examp.2} and the Taguchi height of $\phi$ in \cite[Corollary 0.2]{Wei3}, we then express $\eta_0^{(\alpha),*}(y)$ as (cf.\ Lemma~\ref{lem: Alg-geo.14}):
\begin{eqnarray}
\eta^{(\alpha),*}_{0}(y) &=& \frac{2\chi_K(y)|y|_{\AA_k}\#\Pic(O_K)}{f_\infty \#\Pic(A)} \nonumber \\
&& \!\!\!\!\!\!\!\!\!\!\!\!\!\!\! \cdot \left[ \ln|y|_{\AA_k} - 2\tilde{h}_{\text{Tag}}(\phi) -(g_k-1)\ln q  - \frac{\zeta_A'(0)}{\zeta_A(0)} 
+  \frac{(-1)^{f_\infty} q^{\deg \infty} +1 - 2^{f_\infty}}{f_\infty(q^{\deg \infty}+1)} \cdot \deg \infty \ln q\right]. \nonumber
\end{eqnarray}
Here $\zeta_A(s):= \prod_{v \neq \infty} (1-q^{-s \deg v})^{-1}$ is the zeta function of $A$.
This provides a geometric intepretation of the constant Fourier coefficient $\eta_0^{(\alpha),*}(y)$.\\
%This observation can be applied to our Rankin-type $L$-functions in \cite{CWY}, which will be explored in a subsequent paper.\\

%We refer the readers to Section~\ref{sec: Alg-geo.1} and \ref{sec: Alg-geo.2} for further details. 

%For the case of general $\alpha$, set $V^{(\alpha)}:= (F,\alpha \cdot N_{F/k})$. Then the quadratic space $F \oplus V^{(\alpha)}$ can be identified with a quaternion algebra $\Dcal^{(\alpha)}$ over $k$ such that the quadratic form on $F \oplus V^{(\alpha)}$ corresponds to the reduced norm form on $\Dcal^{(\alpha)}$. It is natural to predict an analogous relation between the derivative $\eta^{(\alpha)}$ and the coarse moduli scheme of $\Dcal^{(\alpha)}$-elliptic sheaves with \lq\lq complex multiplication\rq\rq\ by $O_F$, which will be explored in a forthcoming paper. \\

The description of the non-zero Fourier coefficients of $\eta^{(\alpha)}$ in Proposition~\ref{prop: Examp.9} is in fact derived from a general pattern in Theorem~\ref{thm: Deriv.1}. We give a short discussion in the following. Let $n$ be an arbitrary positive odd integer and $\Ccal$  be an incoherent quadratic space with dimension $n+1$.
for each $\beta \in \Sym_n(k)$ (i.e.\ $\beta$ is symmetric) with $\det \beta \neq 0$, let
$$\Diff(\beta,\Ccal):= \big\{ \text{place $v$ of $k$} \ \big|\  \Hasse_v(\Ccal) \neq \chi_{\Ccal_v}(\det \beta)\cdot (\det \beta, -(-1)^{(n+1)/2})_v\cdot \Hasse_v(\beta)\big\},$$
where $\Hasse_v(\Ccal)$ (resp.\ $\Hasse_v(\beta)$) is the Hasse invariant of $\Ccal$ (resp.\ $\beta$) at the place $v$ of $k$, and $(\cdot,\cdot)_v$ is the local Hilbert quadratic symbol at $v$. It can be shown that the cardinality of $\Diff(\beta,\Ccal)$ provides a lower bound for the vanishing order of $\beta$-th Fourier coefficient of $E(\cdot,s,\Phi_{\varphi})$ at $s=0$ (cf.\ Proposition~\ref{prop: Fourier.6}).
When $\Diff(\beta,\Ccal) = \{v_0\}$, let $V_{\beta}$ be the coherent quadratic space over $k$ whose associated character $\chi_{V_{\beta}} = \chi_{\Ccal}$ and the Hasse invariant $\Hasse_v(V_{\beta}) = \Hasse_v(\Ccal)$ for every place $v \neq v_0$.
Then the $\beta$-th Fourier coefficient of the central critical derivative can be understood by the theta series associated with $V_{\beta}$: %. To be more concrete, the statement of Theorem~\ref{thm: Deriv.1} is in the following.

\begin{thm}\label{thm: Intro.2}
\text{\rm (cf.\ Theorem~\ref{thm: Deriv.1})}
Let $\Ccal$ be an incoherent quadratic space over $k$ with even dimension $m$ and take $n = m-1$. For $\beta \in \Sym_n(k)$ with $\det \beta \neq 0$, suppose $\Diff(\beta,\Ccal) = \{v_0\}$ and the associated quadratic space $V_{\beta}$ is anisotropic.
Then for each pure-tensor Schwartz function $\varphi = \otimes_v\varphi_v \in S(\Ccal(\AA_k)^n)$ and $a \in \GL_n(\AA_k)$, the Fourier coefficient
$$\frac{\partial}{\partial s} E_{\beta}^*(a,s,\Phi_{\varphi}) \bigg|_{s=0} = 2\cdot \frac{W_{v_0,a_{v_0}*\beta}'(0,\Phi_{v_0,\varphi_{v_0}})}{W_{v_0,a_{v_0}*\beta}(0,\Phi_{v_0,\widetilde{\varphi}_{v_0}})}\cdot \Theta_{\beta}^*(a,\widetilde{\varphi}),$$
where $a_{v_0}*\beta = {}^ta_{v_0}\beta a_{v_0}$, $\widetilde{\varphi} = \otimes_v \widetilde{\varphi}_v \in S(V_{\beta}(\AA_k)^n)$ is any pure-tensor Schwartz function so that:
\begin{itemize}
\item[(i)] for $v \neq v_0$, $\widetilde{\varphi}_v = \varphi_v$ $($here we identify $V_{\beta,v}$ with $\Ccal_v)$;
%\item[(ii)] $W_{v_0,a_{v_0}*\beta}'(0,\Phi_{v_0,\varphi_{v_0}})$ is the derivative of the Whittaker function $W_{v_0,a_{v_0}*\beta}(s,\Phi_{v_0,\varphi_{v_0}})$ at $s = 0$.
\item[(ii)] the Whittaker function $W_{v_0,a_{v_0}*\beta}(s,\Phi_{v_0,\widetilde{\varphi}_{v_0}})$ associated with $\widetilde{\varphi}_{v_0} \in S(V_{\beta,v_0}^n)$ does not vanish at $s = 0$;
\end{itemize}
and
$\Theta_{\beta}^*(a,\widetilde{\varphi})$ are Fourier coefficients of the theta series $\Theta(g,\widetilde{\varphi})$ introduced in \text{\rm Theorem~\ref{thm: Sie-Eis.1}}.
\end{thm}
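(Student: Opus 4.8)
The plan is to derive this identity from the Siegel--Weil formula for the coherent space $V_\beta$ together with a factorization of the $\beta$-th Fourier coefficient of the incoherent Eisenstein series into a product of local Whittaker functions. First I would fix the pure-tensor $\varphi = \otimes_v \varphi_v$ and write the $\beta$-th Fourier coefficient $E_\beta^*(a,s,\Phi_\varphi)$ as a (suitably normalized) Euler product $\prod_v W_{v,a_v*\beta}(s,\Phi_{v,\varphi_v})$ of local Whittaker functions; this is the standard unfolding of the Eisenstein series along the Siegel parabolic. Since $\det\beta \neq 0$, each local factor is entire in $s$, and by Proposition~\ref{prop: Fourier.6} (the lower bound via $\#\Diff(\beta,\Ccal)$) the hypothesis $\Diff(\beta,\Ccal)=\{v_0\}$ means that exactly one factor, the one at $v_0$, vanishes at $s=0$, while all the other factors $W_{v,a_v*\beta}(0,\Phi_{v,\varphi_v})$ for $v\neq v_0$ are nonzero (they are literally the local factors of a coherent, hence nonvanishing, Whittaker product). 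Therefore applying $\partial/\partial s|_{s=0}$ to the product picks out exactly the term where the derivative lands on the $v_0$-factor:
\begin{equation}
\frac{\partial}{\partial s} E_\beta^*(a,s,\Phi_\varphi)\Big|_{s=0}
= W_{v_0,a_{v_0}*\beta}'(0,\Phi_{v_0,\varphi_{v_0}}) \cdot \prod_{v\neq v_0} W_{v,a_v*\beta}(0,\Phi_{v,\varphi_v}). \nonumber
\end{equation}

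Next I would bring in the coherent space $V_\beta$. By construction $\chi_{V_\beta} = \chi_\Ccal$ and $\Hasse_v(V_\beta) = \Hasse_v(\Ccal)$ for all $v\neq v_0$, so $V_{\beta,v} \cong \Ccal_v$ as quadratic spaces for $v\neq v_0$; under the identification in hypothesis~(i) we may take $\widetilde\varphi_v = \varphi_v$ there, and then $W_{v,a_v*\beta}(0,\Phi_{v,\widetilde\varphi_v}) = W_{v,a_v*\beta}(0,\Phi_{v,\varphi_v})$ for every $v\neq v_0$. Multiplying and dividing by the nonzero quantity $W_{v_0,a_{v_0}*\beta}(0,\Phi_{v_0,\widetilde\varphi_{v_0}})$ (nonzero by hypothesis~(ii)), the right-hand side of the displayed equation becomes
\begin{equation}
\frac{W_{v_0,a_{v_0}*\beta}'(0,\Phi_{v_0,\varphi_{v_0}})}{W_{v_0,a_{v_0}*\beta}(0,\Phi_{v_0,\widetilde\varphi_{v_0}})}
\cdot \prod_v W_{v,a_v*\beta}(0,\Phi_{v,\widetilde\varphi_v}). \nonumber
\end{equation}
Now the Siegel--Weil formula for coherent Siegel--Eisenstein series (Theorem~\ref{thm: Sie-Eis.1}) identifies the coherent value $E(g,0,\Phi_{\widetilde\varphi})$ with (a constant multiple of) the theta integral $\Theta(g,\widetilde\varphi)$; since $V_\beta$ is anisotropic the theta integral converges and the Weil representation is genuinely coherent, so comparing $\beta$-th Fourier coefficients gives $E_\beta^*(a,0,\Phi_{\widetilde\varphi}) = \Theta_\beta^*(a,\widetilde\varphi)$ up to the normalizing constant, and this same Fourier coefficient factors as $\prod_v W_{v,a_v*\beta}(0,\Phi_{v,\widetilde\varphi_v})$. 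The factor of $2$ in the statement comes from the precise normalization of $E^*$ versus $E$ (the completed Eisenstein series and the functional equation $s\mapsto -s$), i.e.\ from tracking the archimedean-style constant relating $E_\beta^*$ and the theta Fourier coefficient in Theorem~\ref{thm: Sie-Eis.1}. Substituting $\prod_v W_{v,a_v*\beta}(0,\Phi_{v,\widetilde\varphi_v}) = \Theta_\beta^*(a,\widetilde\varphi)$ (times that constant) into the previous display yields exactly the claimed formula.

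The main obstacle, and the step requiring real care, is the precise bookkeeping of normalizations: one must verify that the ``nonvanishing at $s=0$ of all factors away from $v_0$'' is not merely a generic statement but holds for the specific coherent space $V_\beta$ and the chosen data, and that the completed/normalized Fourier coefficient $E_\beta^*$ carries exactly the constant that produces the displayed factor $2$ and matches $\Theta_\beta^*$ from Theorem~\ref{thm: Sie-Eis.1}; the vanishing-order count from Proposition~\ref{prop: Fourier.6} must be shown to be \emph{exactly} one (not just $\geq 1$) at $s=0$, which is where the anisotropy of $V_\beta$ and the hypothesis $\Diff(\beta,\Ccal)=\{v_0\}$ are both essential. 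A secondary technical point is checking that for $v\neq v_0$ the local identification $V_{\beta,v}\cong\Ccal_v$ can be chosen compatibly with the Weil representation so that $\Phi_{v,\widetilde\varphi_v}$ and $\Phi_{v,\varphi_v}$ literally agree as sections (not just up to a local constant); once these normalization issues are pinned down the argument is essentially the factorization-and-derivative computation sketched above.
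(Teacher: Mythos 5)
Your proposal is correct and follows essentially the same route as the paper's proof of Theorem~\ref{thm: Deriv.1}: factor $E_\beta^*$ through the global Whittaker function (equations (\ref{eq: Fourier.1})--(\ref{eq: Fourier.2})), note that the local factor at $v_0$ vanishes at $s=0$ so the derivative lands on that factor, swap in $\widetilde{\varphi}$ away from $v_0$, and invoke the Siegel--Weil formula of Theorem~\ref{thm: Sie-Eis.1}, which is precisely where the constant $2$ comes from. One small remark: your claim that the factors $W_{v,a_v\star\beta}(0,\Phi_{v,\varphi_v})$ for $v\neq v_0$ are nonzero, and your worry that the vanishing order must be exactly one, are neither needed nor true in general --- the identity follows from $W_{v_0,a_{v_0}\star\beta}(0,\Phi_{v_0,\varphi_{v_0}})=0$ alone, and if some further local factor vanishes then both sides of the formula vanish together.
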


We refer the readers to Section \ref{Deriv} for further details.
The flexibility of the choice of $\widetilde{\varphi}_{v_0}$ is beneficial to the calculation of the values $W_{v_0,a_{v_0}*\beta}(0,\Phi_{v_0,\widetilde{\varphi}_{v_0}})$ and $\Theta_{\beta}^*(a,\widetilde{\varphi})$. Moreover, one has
$$\Theta_{\beta}^*(a ,\widetilde{\varphi}) = 
\chi_{\Ccal}(\det a)|\det a|_{\AA_k}^{\frac{n+1}{2}} \cdot \int_{\Oo(V_{\beta})(k)\backslash \Oo(V_{\beta})(\AA_k)} \sum_{x \in V_{\beta}^n, \atop Q_{V_{\beta}}(x) = \beta} \widetilde{\varphi}(h^{-1}xa) dh,$$
where $\Oo(V_{\beta})$ is the orthogonal group of $V_{\beta}$, $Q_{V_{\beta}}$ denotes the quadratic form on $V_{\beta}$, and $dh$ is the $\Oo(V_{\beta})(\AA_k)$-invariant measure having total mass $1$.
This observation says that the $\beta$-th Fourier coefficients of the central critical derivative is essentially the representation number of $\beta$ as the quadratic form $Q_{V_{\beta}}$.
The assumption of $V_{\beta}$ being anisotropic in Theorem~\ref{thm: Intro.2} is due to the restriction of the function field analogue of Siegel-Weil formula in \cite{Wei2}. We can remove this assumption if a stronger version of Siegel-Weil formula over function fields is verified, which will be explored in a future work.\\
%One subsequent problem is to find a geometric interpretation of the central critical derivatives of the incoherent Siegel-Eisenstein series for $n = 3$ from the description of the Fourier coefficients in Theorem~\ref{thm: Intro.2}, in order to study the Gross-Kudla conjecture on central critical derivatives of triple product $L$-functions over function fields (cf.\ \cite{Wei1}).\\

The structure of this article is organized as follows. We fix our basic notations in Section~\ref{Preli.bas} and review the concept of the incoherent quadratic spaces in Section~\ref{Preli.Coh}. In Section~\ref{Wrepn}, we recall the Weil representation on the spaces of Schwartz functions on quadratic spaces over $k$ and the relevant results we need. In Section~\ref{Sie-Eis}, we introduce the Siegel-Eisenstein series on symplectic groups, and recall their analytic properties. Also included in Section~\ref{Sie-Eis} is the Siegel-Weil formula for the anisotropic case over function fields. In Section~\ref{Fourier}, we focus on the non-singular Fourier coefficients of incoherent Siegel-Eisenstein series, and determine their vanishing order at the central critical point by Whittaker functions. In Section~\ref{Deriv}, we study the derivative of the non-singular Fourier coefficients of incoherent Siegel-Eisenstein series, and prove Theorem~\ref{thm: Intro.2} via the Siegel-Weil formula. In Section~\ref{Examp}, we concentrate on the special case when the incoherent quadratic space $\Ccal$ has dimension $2$, and compute explicitly all the Fourier coefficients of $\eta^{(\alpha)}$. Finally, the algebraio-geometric aspects of $\eta^{(\alpha)}$ is discussed in Section~\ref{Alg-geo}. We recall the moduli problem of Drinfeld modules in Section \ref{sec: Alg-geo.1}, and introduce the special morphisms between Drinfeld modules in Section~\ref{sec: Alg-geo.2}. The proof of Theorem~\ref{thm: Intro.1} and the formula of the constant Fourier coefficient $\eta_0^{(\alpha),*}(y)$ are given in Section~\ref{sec: Alg-geo.3}.\\

%\begin{center}
%\Large \bf Part I: Siegel-Eisenstein series on symplectic groups
%\end{center}

\section{Preliminary}\label{Preli}

\subsection{Basic setting}\label{Preli.bas}

Let $\FF_q$ be the finite field with $q = p^{r_0}$ elements.
Let $k$ be a global function field with constant field
$\mathbb{F}_q$, i.e.\ $k$ is a finitely generated field extension over $\mathbb{F}_q$ with transcendence degree one and $\mathbb{F}_q$ is algebraically closed in $k$. 
In this article we always assume that $q$ is \text{\bf odd}.
For each place $v$ of $k$, let $k_v$ be the completion of $k$ at $v$, and $O_v$ denotes the valuation ring in $k_v$. 
Choosing a uniformizer $\pi_v$ in $O_v$, we set $\FF_v := O_v/\pi_vO_v$, the residue field at $v$, and $q_v$ to be the cardinality of $\FF_v$. Put $\deg v := [\FF_v: \FF_q]$, called the degree of $v$.
The absolute value on $k_v$ is normalized to be:
$$|a_v|_v:= q_v^{-\ord_v(a_v)} = q^{-\deg v\ord_v(a_v)}, \quad \forall a_v \in k_v.$$
Let $\AA_k$ be the adele ring of $k$ and set $O_{\AA_k}:= \prod_v O_v$, the maximal compact subring of $\AA_k$.
For each element $a = (a_v)_v$ in the idele group $\AA_k^{\times}$, the norm $|a|_{\AA_k}$ is defined as:
$$|a|_{\AA_k}:= \prod_v |a_v|_v.$$
Embedding $k$ (resp.\ $k^{\times}$) into $\AA_k$ (resp.\ $\AA_k^{\times}$) diagonally, we have the product formula: $|\alpha|_{\AA_k} = 1$ for every $\alpha \in k^{\times}$.
Throughout this article, fix a non-trivial additive character $\psi :\AA_k \rightarrow \CC^{\times}$ which is trivial on $k$.
For each place $v$ of $k$, let $\psi_v$ be the additive character on $k_v$ defined by $\psi_v(a_v):= \psi(0,...,0,a_v,0,...)$ for each $a_v$ in $k_v$. We denote $\delta_v$ to be the \lq\lq conductor of $\psi$ at $v$,\rq\rq\ i.e.\ the maximal integer $r$ such that $\pi_v^{-r}O_v$ is contained in the kernel of $\psi_v$.
It is known that $\sum_v \delta_v \deg v = 2g_k-2$,
where $g_k$ is the genus of $k$.\\
%We put $\pi^\delta:= (\pi_v^{\delta_v})_v \in \AA_k^{\times}$, a \textit{differential idele with respect to $\psi$.}\\

Take a place $v$ of $k$.
Let $(V, Q_V)$ be a non-degenerate quadratic space of dimension $m$ over $k_v$. The bilinear form on $V\times V$ is
$$\langle x,y \rangle_V:= Q_V(x+y)-Q_V(x)-Q_V(y), \quad \forall x,y \in V.$$ 
The associated quadratic character $\chi_{V}$ on $k_v^{\times}$ is defined by
$$\chi_{V}(\alpha_v) := (\alpha_v, (-1)^{\frac{m(m+1)}{2}} \det V)_v, \quad \forall \alpha_v \in k_v^{\times}.$$
Here 
$(\cdot,\cdot)_v: k_v^{\times}/(k_v^{\times})^2 \times k_v^{\times}/(k_v^{\times})^2 \rightarrow \{\pm 1\}$ is the Hilbert quadratic symbol, i.e.\
$$(a_v,b_v)_v = \begin{cases}
1, & \text{ if $a_v X^2 + b_v Y^2 = Z^2$ has non-trivial solutions,}\\
-1, & \text{ otherwise;}
\end{cases}$$
and $\det V \in k_v^{\times}/(k_v^{\times})^2$ is the discriminant of $V$, i.e. 
$$\det V := \det (\langle x_i,x_j \rangle_V)_{1\leq i, j \leq m} \quad \text{for every basis $\{x_1,...,x_m\}$ of $V$}.$$ 

Take a basis $\{x_1,...,x_m\}$ of $V$ such that the quadratic form becomes 
$$Q_V(x) = \sum_{i=1}^m c_i a_i^2, \quad \forall x = \sum_{i=1}^m a_i x_i \in V,$$
where $c_i \in k_v^{\times}$ for $i = 1,...,m$.
The Hasse invariant of $V$ is:
$$\Hasse_v(V):= \prod_{1\leq i < j \leq m} (c_i,c_j)_v.$$

For a global non-degenerate quadratic space $W$ over $k$, set $\chi_W:= \otimes \chi_{W_v}: k^{\times} \backslash \AA_k^{\times} \rightarrow \{\pm 1\}$, where $W_v = W\otimes_k k_v$, and call $\Hasse_v(W_v)$ the Hasse invariant of $W$ at $v$.

\subsection{Incoherent quadratic spaces}\label{Preli.Coh}

Fix a character $\chi = \otimes_v \chi_v: k^{\times} \backslash \AA_k^{\times} \rightarrow \{\pm 1\}$. An  \it incoherent quadratic space $\Ccal = \{\Ccal_v\}_v$ over $k$ of dimension $m$ with character $\chi$ \rm is a collection of non-degenerate quadratic spaces $\Ccal_v$ over $k_v$, indexed by the places of $k$, such that
\begin{itemize}
\item[(i)] $\dim_{k_v}\Ccal_v = m$ and $\chi_{\Ccal_v} = \chi_v$ for every place $v$. 
\item[(ii)] There exists a global non-degenerate quadratic space $W$ of dimension $m$ over $k$ with character $\chi_W = \chi$ such that $W_v \cong \Ccal_v$ for almost all $v$.
\item[(iii)] (Incoherence condition) The product formula fails for the Hasse invariants:
$$\prod_v \Hasse_v(\Ccal_v) = -1.$$
\end{itemize}

\begin{rem}
Let $\Ccal$ be an incoherent quadratic space over $k$. Due to the incoherence condition, there is no quadratic spaces $W$ over $k$ satisfying that $W_v \cong \Ccal_v$ for all $v$. Conversely,
suppose a \lq\lq coherent\rq\rq\ collection $\Ccal$ of quadratic spaces is given, i.e. $\Ccal$ satisfies (i), (ii), and $\prod_v \Hasse_v(\Ccal_v) = 1$. Then we can find a unique (up to isomorphism) quadratic space $W$ over $k$ such that $W_v \cong \Ccal_v$ for every $v$. Thus a non-degenerate quadratic space over $k$ is also called a \text{\it coherent} quadratic space.
\end{rem}

\section{Weil representation on Schwartz spaces}\label{Wrepn}

Given a positive integer $n$, let $\Spp_n$ be the symplectic group of degree $n$, i.e.\
$$\Spp_n := \left\{ g \in \GL_{2n} \ \Bigg| \ {}^tg \begin{pmatrix}0 & I_n \\ -I_n & 0\end{pmatrix} g = \begin{pmatrix}0 & I_n \\ -I_n & 0\end{pmatrix}\right\}.$$
We view $\Spp_n$ as an affine algebraic group over $\FF_q$.
The \it Siegel-parabolic subgroup \rm $P_n$ is the parabolic subgroup $M_n\cdot N_n$,
where
$$M_n:= \left\{ \mfk(a) = \begin{pmatrix} a & 0 \\ 0 & {}^ta^{-1} \end{pmatrix} \text{ } \bigg| \text{ } a \in \GL_n \right\} \text{ \ and \ }
N_n:= \left\{ \nfk(b) = \begin{pmatrix} I_n & b \\ 0 & I_n \end{pmatrix} \text{ } \bigg| \text{ } b = {}^tb \in \Mat_n\right\}.$$ 
Set $\Sym_n :=\{ b = {}^tb \in \Mat_n\}$. By the map $\mfk$ and $\nfk$, $\GL_n$ and $\Sym_n$ are isomorphic to $M_n$ and $N_n$ respectively.

Take a place $v$ of $k$.
Let $(V, Q_V)$ be a non-degenerate quadratic space over $k_v$ with even dimension $m$. 
The orthogonal group of $V$ is denoted by $\Oo(V)$, i.e.\ 
$$\Oo(V) = \{ h \in \GL(V) \mid Q_V(hx) = Q_V(x), \quad \forall x \in V\}.$$
Let $S(V^n)$ be the space of \it Schwartz functions on $V^n$\rm, i.e.\ the space of $\CC$-valued functions on $V^n$ which are locally constant and compactly supported.
The \it $($local$)$ Weil representation \rm $\omega_v (= \omega_{v, \psi_v})$ of $\Spp_n(k_v) \times \Oo(V)$ on $S(V^n)$
is determined by the following:
for every $\varphi_v \in S(V^n)$ and $x \in V^n$,
\begin{eqnarray}
 (\omega_v(h) \varphi_v)(x) &:=& \varphi_v(h^{-1}x_1,...,h^{-1}x_n), \text{ }\forall h \in \Oo(V); \nonumber \\
 \left(\omega_v\begin{pmatrix} a&0 \\ 0 &{}^t a^{-1}\end{pmatrix}\varphi_v\right)(x)
&:=& \chi_{V}(\det a)|\det a|_{v}^{m/2} \cdot \varphi_v(x\cdot a), \text{ }\forall a \in \GL_n(k_v); \nonumber \\
\left(\omega_v\begin{pmatrix} I_n & b\\0&I_n\end{pmatrix} \varphi_v\right)(x)
&:=& \psi_v \Big(\text{Trace}\big(b \cdot Q_V^{(n)}(x)\big)\Big) \cdot \varphi_v(x), \text{ }\forall b \in \Sym_n(k_v); \nonumber\\
\left(\omega_v\begin{pmatrix}0 & I_n \\ -I_n & 0\end{pmatrix}\varphi_v\right)(x) &:=& \varepsilon_v(V)^n \cdot \widehat{\varphi}_v(x). \nonumber
\end{eqnarray}
Here:
\begin{itemize}
\item $Q_V^{(n)}:V^n \rightarrow \Sym_n(k_v)$ is the {\it moment map,} i.e.\ for any $x = (x_1,...,x_n) \in V^n$,
$$Q_V^{(n)}(x) = \left(\frac{1}{2} \langle x_i,x_j \rangle_V\right)_{1\leq i,j\leq n}.$$
\item $\widehat{\varphi}_v$ is the Fourier transform of $\varphi_v$: %(with respect to $\psi_v$):
$$\widehat{\varphi}_v(x):= \int_{V^n} \varphi_v(y) \cdot \psi_v(\sum_{i=1}^n  \langle x_i,y_i \rangle_V) dy, \quad \forall x = (x_1,...,x_n) \in V^n.$$
The Haar measure $dy = dy_1\cdots dy_n$ is chosen to be \it self dual\rm, i.e.\ 
$$\widehat{\widehat{\varphi}}_v (x) = \varphi_v(-x), \quad \forall x \in V^n.$$
\item $\varepsilon_v(V)$ is the \text{\it Weil index of $V$}, i.e.\
$$\varepsilon_v(V):= \int_{L} \psi_v\big(Q_V(x)\big) dx$$
for any sufficiently large $O_v$-lattice $L$ in $V$. The Haar measure $dx$ is also chosen to be self dual with respect to the pairing $\psi_v(\langle \cdot,\cdot \rangle_V)$.
\end{itemize}
${}$

Given a character $\chi_v: k_v^{\times} \rightarrow \CC^{\times}$,
let $I_v(s,\chi_v)$ be the space of locally constant $\CC$-valued functions $f_v$ on $\Spp_n(k_v)$ satisfying that
$$f_v(\nfk(b) \mfk(a) g) = \chi_v(\det a)|a|_v^{s+\frac{n+1}{2}} f_v(g), \quad \forall a \in \GL_n(k_v),\ b \in \Sym_n(k_v),\ g \in \Spp_n(k_v).$$
The action of $\Spp_n(k_v)$ on $I(s,\chi_v)$ is defined by right translation.
Let $\Phi_v$ be the following $\Spp_n(k_v)$-equivariant homomorphism from $S(V^n)$ to $I_v((m-n-1)/2,\chi_V)$:
$$ \varphi_v \longmapsto \Phi_{v,\varphi_v}:= \Big(g \mapsto \big(\omega_v(g) \varphi_v\big)(0)\Big).$$
Denote by $R_n(V)$ the image of $\Phi_v$. We then have:

\begin{thm}\label{thm: Wrepn.1}
$(1)$ Let $V$ be a non-degenerate quadratic space over $k_v$ with even dimension. Then $R_n(V)$ is isomorphic to $S(V^n_0)$, where $V^n_0 = \{x \in V^n: Q_V^{(n)}(x) = 0\}$. Moreover, the isomorphism is induced from the restriction of $\Phi_v$ on $S(V^n_0)$. \\
$(2)$ Let $\chi_v: k_v^{\times} \rightarrow \{\pm 1\}$ be a quadratic character and $n$ is a positive odd integer. Then
$$I_v(0,\chi_v) = R_n(V^+) \oplus R_n(V^-)$$
where $V^{\pm}$ is the non-degenerate quadratic space over $k_v$ of dimension $n+1$ such that $\chi_v = \chi_{V^{\pm}}$ and $\Hasse_v(V^{\pm}) = \pm 1$.
\end{thm}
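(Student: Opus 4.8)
The plan is to prove both parts of Theorem~\ref{thm: Wrepn.1} by a careful analysis of the Siegel-Weil map $\Phi_v$ and the structure of the degenerate principal series $I_v(s,\chi_v)$ near the critical point. For part (1), the key observation is that, since $\dim V = m$ is even, the parameter $s_0 = (m-n-1)/2$ lies in the range where the normalized intertwining operator is either holomorphic and nonzero or can be controlled, and the relevant map is the restriction of $\Phi_v$ to the ``isotropic cone'' $S(V^n_0)$. First I would show that $\Phi_v$ restricted to $S(V^n_0)$ is injective: if $\varphi_v$ vanishes on $V^n_0$ but $\Phi_{v,\varphi_v} = 0$, then using the $P_n(k_v)$-equivariance and the explicit formulas for $\omega_v(\nfk(b))$ and $\omega_v(\mfk(a))$ one extracts via Fourier inversion in the $b$-variable that $\varphi_v$ must be supported on $V^n_0$, forcing $\varphi_v = 0$. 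Then I would show surjectivity onto $R_n(V)$, which is essentially by definition once one knows that $\Phi_{v,\varphi_v}$ depends only on $\varphi_v|_{V^n_0}$; this last claim follows because the ``support condition'' defining $V^n_0$ is exactly the locus detected by evaluating $(\omega_v(g)\varphi_v)(0)$ as $g$ ranges over $\Spp_n(k_v)$, using that the Weil representation restricted to the Siegel parabolic only sees the value at $0$ and its translates.

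For part (2), the strategy is a dimension/multiplicity count combined with the embedding of part (1). Since $n$ is odd and $\chi_v$ is quadratic, $\dim V^{\pm} = n+1$ is even, so part (1) applies to each and gives $R_n(V^{\pm}) \cong S((V^{\pm})^n_0)$. First I would verify that $R_n(V^+)$ and $R_n(V^-)$ are both submodules of $I_v(0,\chi_v)$ — the parameter $(m-n-1)/2$ with $m = n+1$ is exactly $s = 0$. Next, the main point is that $R_n(V^+) \cap R_n(V^-) = 0$ and $R_n(V^+) + R_n(V^-) = I_v(0,\chi_v)$. For the intersection being zero, I would use the action of the ``big cell'' element $\begin{pmatrix} 0 & I_n \\ -I_n & 0 \end{pmatrix}$: its action on $\Phi_{v,\varphi_v}$ introduces the Weil index $\varepsilon_v(V^{\pm})^n$, and since $\varepsilon_v(V^+)/\varepsilon_v(V^-)$ is a primitive fourth root of unity (the two spaces differ by Hasse invariant, equivalently their Weil indices differ by $i$ or $-i$ up to the $\chi_v$-twist), an element lying in both images would have to be an eigenvector with two incompatible eigenvalues under the (suitably normalized) intertwining operator, hence zero. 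For the sum being everything, I would compute $\dim I_v(0,\chi_v)$ — or rather, match composition factors — against $\dim R_n(V^+) + \dim R_n(V^-)$: by a Bruhat-theoretic or Mackey-theoretic analysis, $I_v(0,\chi_v)$ is the direct sum of the ``spherical-type'' constituent and one other, and each $R_n(V^{\pm})$ contributes exactly one, so the sum exhausts $I_v(0,\chi_v)$ by the nondegeneracy of the Weil pairing.

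Concretely, I would organize the proof of (2) as: (a) both $R_n(V^{\pm}) \subseteq I_v(0,\chi_v)$ by computing the section parameter; (b) the intertwining operator $M_v(s): I_v(s,\chi_v) \to I_v(-s,\chi_v)$ acts on $R_n(V^{\pm})$ with the eigenvalue governed by $\varepsilon_v(V^{\pm})$ at $s=0$, and these eigenvalues are distinct — this separates the two images; (c) a Jordan–Hölder argument: $I_v(0,\chi_v)$ has length $2$ (for $n$ odd and $\chi_v$ quadratic this is the classical fact, traceable e.g.\ to the structure theory of degenerate principal series of $\Spp_n$ over a $p$-adic-type local field), the two constituents are distinguished by the $M_v(0)$-eigenvalue, and each $R_n(V^{\pm})$ is nonzero and $M_v(0)$-isotypic, hence equals one full constituent; (d) conclude $I_v(0,\chi_v) = R_n(V^+) \oplus R_n(V^-)$.

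The main obstacle I anticipate is step (c) — pinning down that $I_v(0,\chi_v)$ has exactly two irreducible constituents (or at least that $R_n(V^+)$ and $R_n(V^-)$ together span it) in the function-field local setting, and that each $R_n(V^{\pm})$ is \emph{irreducible} rather than merely a proper submodule. Over number fields this is the content of work of Kudla–Rallis on the structure of degenerate principal series; over a non-archimedean local field of positive characteristic (odd residue characteristic) the same techniques should apply, but one must be careful that the representation-theoretic inputs (e.g.\ the Howe duality correspondence, the non-vanishing of local theta lifts, and irreducibility of the ``first occurrence'' theta lift) are available in odd characteristic. A cleaner route that sidesteps some of this is to prove directly that $\Phi_v: S((V^+)^n) \oplus S((V^-)^n) \to I_v(0,\chi_v)$, restricted appropriately, is surjective by exhibiting, for each $f_v \in I_v(0,\chi_v)$, an explicit Schwartz function mapping to it — using the fact that any $f_v$ is determined by its restriction to the open Bruhat cell, which is a Schwartz function on $\Sym_n(k_v)$, together with the $\varepsilon$-factor dichotomy to decide which of $V^{\pm}$ to use — and then the direct-sum decomposition follows from (b). I would pursue this second, more hands-on route if the abstract representation theory proves delicate in positive characteristic.
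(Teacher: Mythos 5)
Your proof of part (1) rests on the claim that $\Phi_{v,\varphi_v}$ depends only on $\varphi_v|_{V_0^n}$, justified by saying that the Weil representation ``restricted to the Siegel parabolic only sees the value at $0$''. This is where the argument breaks. A section of $I_v(s,\chi_v)$ is determined by its values on all of $\Spp_n(k_v)$, and on the big cell one has $\Phi_{v,\varphi_v}(w_n\nfk(b)) = \varepsilon_v(V)^n\int_{V^n}\varphi_v(y)\,\psi_v\big(\mathrm{Trace}(b\,Q_V^{(n)}(y))\big)\,dy$, i.e.\ the Fourier transform in $b$ of the pushforward of $\varphi_v\,dy$ under the moment map; this sees $\varphi_v$ on all of $V^n$, not only on the cone. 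Concretely, for $n=1$ and $V$ a hyperbolic plane, the characteristic function of a small neighbourhood of a vector $y_0$ with $Q_V(y_0)\neq 0$ vanishes identically on $V_0^1$, yet $\Phi_{v,\varphi_v}(w_1)=\varepsilon_v(V)\int_V\varphi_v(y)\,dy\neq 0$. So $\Phi_v$ does not factor through restriction to the cone in the naive sense, and both your well-definedness and injectivity steps collapse; the isomorphism $R_n(V)\cong S(V_0^n)$ requires the finer orbit/Jacquet-module analysis of \cite[Proposition B.1]{Wei2}, which is what the paper invokes.

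For part (2) there are two genuine gaps. First, separating $R_n(V^+)$ and $R_n(V^-)$ by ``eigenvalues'' of $w_n$ or of $M_v(0)$ is not available at the outset: $w_n$ is not central, so it has no eigenvalue on a submodule, and knowing that $M_v(0)$ acts by a scalar on each $R_n(V^\pm)$ already presupposes the isotypicity/irreducibility you are trying to prove; moreover the numerical input is off --- for two even-dimensional spaces with the same discriminant the Weil indices differ by the ratio of Hasse invariants, i.e.\ by $-1$, not by a primitive fourth root of unity. Second, your step (c) (length two of $I_v(0,\chi_v)$ together with irreducibility of the $R_n(V^\pm)$) is precisely the content of \cite[Corollary 3.7]{KR1}, which is what the paper cites, so asserting it as a known fact makes the write-up circular as a self-contained proof; and the ``hands-on'' fallback does not repair this, because the restriction to the big cell of a general $f_v\in I_v(0,\chi_v)$ is not a Schwartz function on $\Sym_n(k_v)$, the big-cell restrictions of the $\Phi_{v,\varphi_v}$ are of the special form above (Fourier transforms of compactly supported pushforward measures), and a general section mixes contributions from both $V^+$ and $V^-$, so no pointwise $\varepsilon$-dichotomy selects a preimage. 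The mechanism you would actually need (and which the paper sketches following Kudla--Rallis) is: Frobenius reciprocity bounds $\dim_{\CC}\End_{\Spp_n(k_v)}I_v(0,\chi_v)\le 2$; the twisted Jacquet functors --- equivalently the $\psi_{v,\beta_v}$-functionals, which vanish on $R_n(V)$ unless $V$ represents $\beta_v$, cf.\ Lemmas~\ref{lem: Fourier.4} and \ref{lem: Fourier.5} --- show the two submodules are inequivalent and neither is contained in the other; and the invariant inner product on $I_v(0,\chi_v)$ yields complete reducibility, whence the direct-sum decomposition.
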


\begin{proof}
$(1)$ follows from \cite[Proposition B.1]{Wei2} and $(2)$ follows from \cite[Corollary 3.7]{KR1}. Although the characteristic of the base local field is assumed to be $0$ in \cite{KR1}, the argument actually works for the case of odd characteristic. We recall the steps of the proof for $(2)$ in the following.

By the Frobenius reciprocity theorem, we know $\dim_{\CC} \End_{\Spp_n(k_v)}\big(I(0,\chi_v)\big) \leq 2$. In particular, $\dim_{\CC} \End_{\Sp_n(k_v)}\big(I(0,\chi_v)\big) = 1$ if $\chi_v \equiv 1$ and $\dim V = 2$.
Using the twisted Jacquet functor on $R_n(V^+)$ and $R_n(V^-)$, we obtain that $R_n(V^+)$ and $R_n(V^-)$ are inequivalent submodules of $I(0,\chi_v)$ and
$$R_n(V^+) \not\subset R_n(V^-), \quad R_n(V^-) \not\subset R_n(V^+).$$
Moreover, using the inner product 
$$(f_1,f_2) := \int_{P_n(k_v)\backslash \Spp_n(k_v)} f_1(g)\overline{f_2(g)} dg$$ on $I(0,\chi_v)$, we obtain that $I(0,\chi_v)$ is completely reducible. Therefore the result holds.
\end{proof}

Now, let $W$ (resp.\ $\Ccal$) be a coherent (resp.\ incoherent) quadratic space over $k$ with even dimension $m$. We have the \text{\it global Weil representation} $\omega := \otimes_v \omega_v$ on the Schwartz space $S(W(\AA_k)^n)$ (resp.\ $S(\Ccal(\AA_k)^n)$). Here $W(\AA_k):= W \otimes_k \AA_k = \prod_v^{\prime} W_v$ and $\Ccal(\AA_k):= \prod^{\prime}_v \Ccal_v$. Let $\chi = \chi_W$ (resp.\ $\chi_{\Ccal}$) and $I(s,\chi)$ denotes the space of locally constant $\CC$-valued functions $f$ on $\Spp_n(\AA_k)$ satisfying that
$$f(\nfk(b) \mfk(a) g) = \chi(\det a)|a|_{\AA_k}^{s+\frac{n+1}{2}} f(g), \quad \forall a \in \GL_n(\AA_k),\ b \in \Sym_n(\AA_k),\ g \in \Spp_n(\AA_k).$$
Let $\Phi$ be the $\Spp_n(\AA_k)$-equivariant homomorphism from $S(W(\AA_k)^n)$ (resp.\ $S(\Ccal(\AA_k)^n)$ to $I_v((m-n-1)/2,\chi)$ defined by:
$$ \varphi \longmapsto \Phi_{\varphi}:= \Big(g \mapsto \big(\omega(g) \varphi\big)(0)\Big).$$
Denote by $R_n(W)$ (resp.\ $R_n(\Ccal)$) the image of $\Phi$. Then Theorem \ref{thm: Wrepn.1} (2) implies that

\begin{cor}\label{cor:Wrepn.2}
Let $\chi: k^{\times}\backslash \AA_k^{\times} \rightarrow \{\pm 1\}$ be a quadratic character and $n$ is a positive odd integer. Then
$$I(0,\chi) = \left(\bigoplus_{W} R_n(W) \right)\oplus \left(\bigoplus_{\Ccal} R_n(\Ccal)\right),$$
where $W$ (resp.\ $\Ccal$) runs through all the coherent (resp.\ incoherent) quadratic spaces of dimension $n+1$ over $k$ with $\chi= \chi_W$ (resp.\ $\chi_{\Ccal}$). 
\end{cor}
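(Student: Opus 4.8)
The plan is to deduce Corollary~\ref{cor:Wrepn.2} from Theorem~\ref{thm: Wrepn.1}(2) by a ``local-to-global'' restricted-tensor-product argument, exactly parallel to the decomposition of $I(0,\chi)$ one obtains locally place by place. First I would recall that, since $\chi = \otimes_v \chi_v$ is a quadratic character and $n$ is odd, Theorem~\ref{thm: Wrepn.1}(2) gives at each place $v$ a decomposition $I_v(0,\chi_v) = R_n(V_v^+) \oplus R_n(V_v^-)$ into two inequivalent irreducible $\Spp_n(k_v)$-submodules, where $V_v^{\pm}$ is the local quadratic space of dimension $n+1$ with $\chi_{V_v^{\pm}} = \chi_v$ and $\Hasse_v(V_v^{\pm}) = \pm 1$. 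Since almost every $\chi_v$ is unramified, for almost all $v$ the space $R_n(V_v^+)$ contains the canonical spherical vector $\Phi_v^0$ (the one taking value $1$ on $\Spp_n(O_v)$), so the restricted tensor product $I(0,\chi) = \otimes_v' I_v(0,\chi_v)$ (restricted with respect to the $\Phi_v^0$) makes sense and decomposes as a direct sum indexed by choices of sign $\epsilon_v \in \{\pm\}$ at each place, with $\epsilon_v = +$ for almost all $v$.

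Next I would match each such sign vector $\{\epsilon_v\}_v$ (with $\epsilon_v = +$ for almost all $v$) to a collection $\{V_v^{\epsilon_v}\}_v$ of local quadratic spaces of dimension $n+1$, all sharing the character $\chi$ and agreeing with a fixed global space for almost all $v$ (condition (ii) in the definition of incoherent/coherent space holds automatically since the $\epsilon_v$ are eventually $+$). Such a collection is coherent exactly when $\prod_v \Hasse_v(V_v^{\epsilon_v}) = \prod_v \epsilon_v = 1$, in which case it comes from a unique global coherent quadratic space $W$ of dimension $n+1$ with $\chi_W = \chi$; and it is incoherent exactly when $\prod_v \epsilon_v = -1$, giving an incoherent $\Ccal$. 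This sets up a bijection between sign vectors and the set of all coherent and incoherent quadratic spaces of dimension $n+1$ over $k$ with the given character. Under this bijection, the summand $\otimes_v' R_n(V_v^{\epsilon_v})$ of $I(0,\chi)$ corresponding to $\{\epsilon_v\}$ is precisely $R_n(W)$ or $R_n(\Ccal)$: indeed $R_n(W)$ (resp.\ $R_n(\Ccal)$) is by definition the image of $\Phi = \otimes_v \Phi_v$ on $S(W(\AA_k)^n) = \otimes_v' S(W_v^n)$ (resp.\ on $S(\Ccal(\AA_k)^n)$), hence is the restricted tensor product of the local images $R_n(W_v)$, and one checks the spherical vectors match up (using that $\varepsilon_v(W_v)$ and the local Weil representation are unramified at almost all $v$, so the local Siegel section attached to the characteristic function of a self-dual lattice is the spherical vector $\Phi_v^0$).

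Putting these together yields
$$I(0,\chi) \;=\; \bigoplus_{\{\epsilon_v\}} \Big(\textstyle\bigotimes_v' R_n(V_v^{\epsilon_v})\Big) \;=\; \left(\bigoplus_{W} R_n(W)\right) \oplus \left(\bigoplus_{\Ccal} R_n(\Ccal)\right),$$
which is the assertion. The main obstacle I anticipate is the bookkeeping at the ramified and archimedean-analogue places and, more seriously, justifying that the \emph{global} decomposition is a genuine direct sum — i.e.\ that distinct sign vectors give linearly independent (indeed non-isomorphic as abstract representations, or at least non-overlapping) submodules. Locally this is Theorem~\ref{thm: Wrepn.1}(2), but globally one must argue that $\otimes_v' R_n(V_v^{\epsilon_v})$ and $\otimes_v' R_n(V_v^{\epsilon_v'})$ intersect trivially when $\{\epsilon_v\} \neq \{\epsilon_v'\}$: this follows because they differ at some place $v_0$ where $R_n(V_{v_0}^+) \cap R_n(V_{v_0}^-) = 0$ inside $I_{v_0}(0,\chi_{v_0})$ and the ambient space is the restricted tensor product, so a standard argument with pure tensors and the flatness of $\otimes$ over $\CC$ forces the intersection to vanish. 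One also needs the completeness statement ``every element of $I(0,\chi)$ lies in the sum'': this is immediate from the local completeness $I_v(0,\chi_v) = R_n(V_v^+)\oplus R_n(V_v^-)$ together with the fact that any $f \in I(0,\chi)$ is a finite sum of pure tensors $\otimes_v f_v$ with $f_v = \Phi_v^0 \in R_n(V_v^+)$ for almost all $v$.
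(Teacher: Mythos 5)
Your proposal is correct and follows essentially the same route as the paper: the paper states the corollary as an immediate consequence of Theorem~\ref{thm: Wrepn.1}(2), and your restricted-tensor-product bookkeeping (spherical vectors at almost all places, sign vectors $\{\epsilon_v\}$ matched with coherent/incoherent collections via the product of Hasse invariants, and $R_n(W)$, $R_n(\Ccal)$ realized as the corresponding restricted tensor products of local summands) is exactly the standard argument the paper leaves implicit. The only caveat, shared with the paper's own formulation, is the degenerate case $n=1$, $\chi_v\equiv 1$, where $V_v^-$ does not exist and one uses the convention $R_1(V_v^-)=0$ (as reflected in the parenthetical remark in the proof of Theorem~\ref{thm: Wrepn.1}).
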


\section{Siegel-Eisenstein series}\label{Sie-Eis}

Fix a quadratic character $\chi: k^{\times}\backslash \AA_k^{\times} \rightarrow \{\pm 1\}$ and a positive integer $n$.
Let $f(\cdot,s) \in I(s,\chi)$ be a \text{\it flat section}, i.e.\ for every $\kappa \in \Spp_n(O_{\AA_k})$, $f(\kappa,s)$ is independent of the chosen $s$.
The \text{\it Siegel-Eisenstein series associated with $f$} is defined by the following:
$$E(g,s,f):= \sum_{\gamma \in P_n(k)\backslash \Spp_n(k)} f(\gamma g,s), \quad \forall g \in \Spp_n(\AA_k).$$
It is known that this series converges absolutely for $\re(s)>(n+1)/2$ and can be extended to a rational function in $q^{-s}$ . Moreover, we have the following functional equation
$$E(g,s,f) = E(g,-s,M(s)(f)), \quad \forall g \in \Spp_n(\AA_k).$$
Here $M(s): I(\chi,s)\rightarrow I(\chi,-s)$ is the following intertwining operator:
$$M(s)(f)(g):= \int_{\Sym_n(\AA_k)} f(w_n \nfk(b) g,s) db, \quad w_n:= \begin{pmatrix}0&I_n\\-I_n&0\end{pmatrix},$$
and the Haar measure $db$ is \lq\lq self-dual with respect to $\psi$,\rq\rq\ i.e.\ viewing $\Sym_n(\AA_k)$ as $\AA_k^{\frac{n(n+1)}{2}}$ and write $db$ as $\prod_{1\leq i \leq j \leq n} db_{ij}$, the Haar measure $db_{ij}$ on $\AA_k$ for each pair $(i,j)$ is self-dual with respect to $\psi$.
Detecting the possible poles of $E(g,s,f)$ (cf.\ \cite{Ike}), we have that $E(g,s,f)$ is always holomorphic at the central critical value $s=0$. \\

Let $W$ be an anisotropic quadratic space over $k$ with even dimension $m$ (then $m \leq 4$). Let $\chi = \chi_W$ and $n = m-1$. For each Schwartz function $\varphi \in S(W(\AA_k)^n)$, we extend $\Phi_\varphi \in I(0,\chi)$ to be a flat section $\Phi_{\varphi}(\cdot,s) \in I(s,\chi)$ (called \text{\it the Siegel section associated with $\varphi$}) by setting
$$\Phi_{\varphi}(g,s) = |\det a|_{\AA_k}^s \Phi_{\varphi}(g)$$
for every $g = \nfk(b)\mfk(a)\kappa$ where $a \in \GL_n(\AA_k)$, $b \in \Sym_n(\AA_k)$, and $\kappa \in \Spp_n(O_{\AA_k})$.
The Siegel-Weil formula connects the central critical value $E(g,0,\Phi_{\varphi})$ with the theta series associated with the quadratic form on $W$:

\begin{thm}\label{thm: Sie-Eis.1}
\text{\rm (cf.\ \cite{Wei2})}
For every Schwartz function $\varphi \in S(W(\AA_k)^n)$ where $W$ is an anisotropic quadratic space over $k$ with even dimension $m = n+1$,
set $$\Theta(g,\varphi):= \int_{\Oo(W)(k)\backslash \Oo(W)(\AA_k)} \sum_{x \in W^n}\big(\omega(g,h)\varphi\big)(x)dh.$$
The measure $dh$ is normalized so that $\text{\rm vol}(\Oo(W)(k)\backslash \Oo(W)(\AA_k), dh) = 1$.
Then
$$E(g,0, \Phi_{\varphi}) = 2 \cdot \Theta(g,\varphi) \quad \forall g \in \Spp_n(\AA_k),$$
where $\Phi_{\varphi}(\cdot,s) \in I(s,\chi_W)$ is the Siegel section associated with $\varphi$.
\end{thm}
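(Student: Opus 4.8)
The statement is the Siegel--Weil formula in the anisotropic, equal-rank range ($m = n+1$), which is the most classical and best-behaved case. The plan is to follow Weil's original argument, adapted to the function field setting, and to localize the identity of two automorphic forms by comparing their images under a pairing against cusp forms or, more efficiently, by a direct geometric matching of terms.

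First I would record the basic automorphic properties of both sides. The theta integral $\Theta(g,\varphi)$ is a well-defined automorphic function on $\Spp_n(\AA_k)$: because $W$ is anisotropic, the quotient $\Oo(W)(k)\backslash \Oo(W)(\AA_k)$ is compact, so the inner integral converges absolutely, and the whole expression is left $\Spp_n(k)$-invariant since $\theta(g,h,\varphi) := \sum_{x\in W^n}(\omega(g,h)\varphi)(x)$ is invariant under $\Spp_n(k)\times \Oo(W)(k)$ by Poisson summation (the reduction-theoretic input that makes $\omega$ well-defined globally on the Schwartz space). On the other side, $E(g,0,\Phi_\varphi)$ is holomorphic at $s=0$ by the discussion of poles preceding the theorem (Ikeda's computation), so both objects are genuine automorphic forms on $\Spp_n(\AA_k)$. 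The goal is then to show these two automorphic forms coincide.

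The main step is the comparison of Fourier--Whittaker expansions along $N_n$. For $\beta \in \Sym_n(k)$ I would compute the $\beta$-th Fourier coefficient of $\Theta(g,\varphi)$: unfolding, it is an orbital integral of $\varphi$ over the set $\{x\in W^n : Q_W^{(n)}(x) = \beta\}$, which (since $W$ is anisotropic of dimension $n+1$) is a single $\Oo(W)(k)$-orbit when $\det\beta\neq 0$ and the quadratic space of rank $n$ with moment $\beta$ embeds in $W$, and is empty otherwise; for $\det\beta = 0$ one gets contributions from lower-rank orbits and from $\beta=0$. On the Eisenstein side the $\beta$-th coefficient factors as a product of local Whittaker integrals $\prod_v W_{v,\beta}(0,\Phi_{v,\varphi_v})$ (this is exactly the machinery developed later in Sections~\ref{Fourier} and~\ref{Deriv}, which I may invoke). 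The heart of the proof is the local identity matching each $W_{v,\beta}(0,\Phi_{v,\varphi_v})$ with the corresponding local orbital integral of $\varphi_v$ on $\{x\in W_v^n : Q_{W_v}^{(n)}(x)=\beta\}$ — this is the local Siegel--Weil identity, and it follows from Theorem~\ref{thm: Wrepn.1}(1), which identifies $R_n(W)$ with $S(W_0^n)$ so that the section $\Phi_{v,\varphi_v}$ ``remembers'' only $\varphi_v|_{W_{0,v}^n}$, together with a comparison of the local Tamagawa/self-dual measures. Multiplying the local identities over all $v$ and accounting for the global volume normalization $\mathrm{vol}(\Oo(W)(k)\backslash\Oo(W)(\AA_k))=1$ and the Tamagawa number $\tau(\Oo(W)) = 2$ produces the factor $2$ on the right-hand side. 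Finally, matching of the constant terms ($\beta$ singular) needs separate care: one shows that the constant term of $E(g,0,\Phi_\varphi)$ equals that of $2\Theta(g,\varphi)$, using that $M(s)\Phi_\varphi$ is regular at $s=0$ in this range and the intertwining integral of the Siegel section computes the $\beta=0$ orbital integral (the ``regularized'' part is trivial here because $W$ is anisotropic, so there are no non-trivial isotropic orbits and no convergence issues).

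The step I expect to be the main obstacle is the precise bookkeeping in the local Siegel--Weil identity and the measure normalizations, particularly at the finitely many ``bad'' places where $\varphi_v$ is not the standard lattice-characteristic function and where $\psi_v$ is ramified (the conductor $\delta_v \neq 0$). One must check that the self-dual Haar measures used to define $\widehat{\varphi}_v$, the Weil index $\varepsilon_v(W_v)$, and the local Whittaker integral all fit together so that the product of local constants is exactly $2$ with no spurious powers of $q$; the global constraint $\sum_v \delta_v \deg v = 2g_k - 2$ and the product formula for Hasse invariants (here $\prod_v \Hasse_v(W_v) = 1$ since $W$ is coherent) are what force the bad local factors to cancel. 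Since the statement is quoted from \cite{Wei2}, in the write-up I would either give this argument in outline and refer to \cite{Wei2} for the detailed local computations, or — if a self-contained proof is wanted — carry out the local matching place by place and then assemble, flagging the anisotropy of $W$ as the hypothesis that removes all regularization difficulties.
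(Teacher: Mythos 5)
The paper itself gives no proof of this theorem: it is imported verbatim from \cite{Wei2}, so there is no internal argument to compare yours against, and the only fair reading of your plan is as an attempted reconstruction of the proof in \cite{Wei2}. Your overall architecture --- use anisotropy to get compactness of $\Oo(W)(k)\backslash \Oo(W)(\AA_k)$ and absolute convergence of the theta integral, then compare Fourier coefficients of $E(g,s,\Phi_\varphi)$ at $s=0$ with orbital integrals on the theta side --- is the standard route and is broadly consistent with that reference.

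Two steps, however, are not right as written. First, the constant $2$ is not produced by ``the Tamagawa number $\tau(\Oo(W))=2$'' together with the volume-one normalization: with $\mathrm{vol}(\Oo(W)(k)\backslash\Oo(W)(\AA_k),dh)=1$ the same bookkeeping in the convergent range $m>n+1$ yields constant $1$, so this reasoning would prove a false statement there. The $2$ is the boundary-range constant $\kappa=2$ of the first term identity at $m=n+1$ (Kudla--Rallis \cite{KR2}); concretely it arises on the Eisenstein side because at $s=0$ the identity and intertwining contributions to the constant term coincide and add --- exactly the mechanism visible in the computation (\ref{eq: Examp.1}) of Lemma~\ref{lem: Examp.2}, where for incoherent data the two terms instead cancel. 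Establishing that coincidence (essentially $M(0)\Phi_\varphi=\Phi_\varphi$ on the relevant piece) is a real part of the proof, not a triviality removed by anisotropy. Second, the local identity equating $W_{v,\beta}(0,\Phi_{v,\varphi_v})$ with an orbital integral of $\varphi_v$ over $\Omega_\beta(W_v)$ does not follow from Theorem~\ref{thm: Wrepn.1}(1), which concerns restriction to the null cone $Q^{(n)}=0$ rather than the level sets at nonsingular $\beta$; it is a separate input (Lemma~\ref{lem: Fourier.4}, i.e.\ \cite[Lemma A.1]{Wei2}), and even granting it you must still control the product of the undetermined local constants $c_v$, which your sketch leaves open. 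Finally, for $n\geq 2$ matching the nonsingular coefficients and the constant term does not exhaust the Fourier expansion; the clean way to conclude is the device the paper itself uses for Theorem~\ref{thm: Sie-Eis.2}: apply a \cite[Lemma 2.5]{KRS}-type argument to the automorphic form $E(\cdot,0,\Phi_\varphi)-2\,\Theta(\cdot,\varphi)$, which lies in the image of $R_n(W)$ and has vanishing nonsingular coefficients, hence vanishes identically. With those repairs your outline becomes a viable reconstruction; as stated, these are genuine gaps rather than bookkeeping.
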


On the other hand, let $\Ccal$ be an incoherent quadratic space over $k$ with even dimension $m$. For every $\varphi \in S(\Ccal(\AA_k)^n)$, by the same way we can also extend $\Phi_{\varphi}$ to a flat section $\Phi_{\varphi}(\cdot,s) \in I(s,\chi_{\Ccal})$.

\begin{thm}\label{thm: Sie-Eis.2}
For every Schwartz function $\varphi \in S(\Ccal(\AA_k)^n)$ where $\Ccal$ is an incoherent quadratic space over $k$ with even dimension $m = n+1$, we have
$$E(g,0,\Phi_{\varphi}) = 0, \quad \forall g \in \Spp_n(\AA_k).$$
\end{thm}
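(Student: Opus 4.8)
The plan is to deduce the vanishing of the incoherent Siegel–Eisenstein series at $s=0$ from the decomposition of the degenerate principal series in Corollary~\ref{cor:Wrepn.2} together with the functional equation and the Siegel–Weil formula for the coherent (anisotropic) case. The key observation is that the intertwining operator $M(0): I(0,\chi) \to I(0,\chi)$ acts as a scalar on each irreducible constituent $R_n(W)$ and $R_n(\Ccal)$, and that these scalars are $+1$ on coherent pieces and $-1$ on incoherent pieces; this sign flip is precisely what forces $E(g,0,\Phi_\varphi)$ to vanish. First I would compute the local intertwining operators: for a local quadratic space $V$ over $k_v$ of dimension $m=n+1$ with $\chi_v = \chi_V$, the normalized local operator $M_v(0)$ acts on $R_n(V) \subset I_v(0,\chi_v)$ by the scalar $\Hasse_v(V)$ (up to a common normalization factor independent of $V^{\pm}$, coming from the local Weil index / gamma factor computation — this is standard and parallels \cite{KR1}, adapted to odd characteristic as in the proof of Theorem~\ref{thm: Wrepn.1}). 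Taking the product over all places, the global operator $M(0)$ acts on $R_n(W)$ by $\prod_v \Hasse_v(W) = +1$ (product formula, $W$ coherent) and on $R_n(\Ccal)$ by $\prod_v \Hasse_v(\Ccal_v) = -1$ (the incoherence condition).

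Next I would invoke the functional equation $E(g,s,\Phi_\varphi) = E(g,-s,M(s)(\Phi_\varphi))$. Since $\Phi_\varphi(\cdot,s)$ is the flat section through $\Phi_\varphi \in R_n(\Ccal) \subset I(0,\chi_{\Ccal})$, evaluating at $s=0$ and using $M(0)(\Phi_\varphi) = -\Phi_\varphi$ gives
$$E(g,0,\Phi_\varphi) = E(g,0, M(0)(\Phi_\varphi)) = E(g,0,-\Phi_\varphi) = -E(g,0,\Phi_\varphi),$$
hence $E(g,0,\Phi_\varphi)=0$. For this to be rigorous I must know that $E(g,s,\Phi_\varphi)$ is holomorphic at $s=0$ (so that evaluation makes sense and the functional equation can be specialized there) — this was already recorded in the excerpt via the pole analysis of \cite{Ike}. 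I also need that taking the flat section commutes with $M(s)$ appropriately near $s=0$, i.e. that $M(s)(\Phi_\varphi(\cdot,s))$ is the flat section through $M(0)(\Phi_\varphi)$ up to a factor holomorphic and equal to $1$ at $s=0$; this follows from the definition of $M(s)$ as an integral operator and the standard fact that the normalized intertwining operator is holomorphic and nonzero on this line of the parameter.

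An alternative, more self-contained route avoids the functional equation on $E$ itself: one uses the Siegel–Weil formula (Theorem~\ref{thm: Sie-Eis.1}) to identify, for each coherent $W$, the value $E(g,0,\Phi_\varphi) = 2\Theta(g,\varphi)$, and then observes that the map $\varphi \mapsto E(g,0,\Phi_\varphi)$, being $\Spp_n(\AA_k)$-intertwining from $I(0,\chi)$ to automorphic forms, must annihilate every incoherent constituent $R_n(\Ccal)$ — because its restriction to $R_n(\Ccal)$ would be an intertwining map realizing $R_n(\Ccal)$ inside the space of automorphic forms, and comparison of the two sides of the functional equation (coherent pieces appear with a symmetric sign, incoherent with an antisymmetric one) shows the only possibility compatible with the holomorphy at $s=0$ is the zero map. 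Whichever route, the main obstacle is the precise local sign computation: establishing that $M_v(0)$ acts on $R_n(V)$ by exactly $\Hasse_v(V)$ in the common normalization, uniformly over the two local spaces $V^{\pm}$. This is a gamma-factor / Weil-index calculation that one must carry out in odd residue characteristic, checking that the arguments of \cite{KR1} (and \cite{KR2}) go through verbatim; everything else — the functional equation, holomorphy at $s=0$, the product formula, and the incoherence condition — is already in hand from the results quoted above.
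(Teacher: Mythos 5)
Your strategy---deducing the vanishing from an antisymmetric functional equation at $s=0$, via the sign of the intertwining operator on the incoherent constituent---is genuinely different from the paper's proof, which never touches $M(s)$: the paper shows that every nonsingular Fourier coefficient of $E(g,0,\Phi_{\varphi})$ vanishes (Proposition~\ref{prop: Fourier.6}, via the dichotomy of Lemma~\ref{lem: Fourier.5} and the incoherence condition, which forces $\Diff(\beta,\Ccal)\neq\emptyset$), and then concludes by the representation-theoretic argument of \cite[Lemma 2.5]{KRS} applied to the $\Spp_n(\AA_k)$-equivariant map $\varphi\mapsto E(\cdot,0,\Phi_{\varphi})$ on $R_n(\Ccal)$. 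As written, however, your argument has a genuine gap at its central step, the identity $M(0)(\Phi_{\varphi})=-\Phi_{\varphi}$. The functional equation quoted in the paper involves the \emph{unnormalized} operator $M(s)$ applied to the flat section; $M(s)\Phi_{\varphi}(\cdot,s)$ is not a flat section, and your claim that it is ``the flat section through $M(0)(\Phi_\varphi)$ up to a factor holomorphic and equal to $1$ at $s=0$'' is exactly what must be proved, not a standard fact. Concretely you need (a) the local statement that the normalized $M_v(0)$ acts on the two summands $R_n(V_v^{\pm})$ of $I_v(0,\chi_v)$ by scalars whose ratio is $-1$ (the Weil-index/gamma-factor computation you defer to \cite{KR1}; it is not in the paper and must be checked in odd characteristic), and (b) control of the global normalizing constant $c(0)$, a ratio of completed $L$- and zeta-factors whose value at $s=0$ is not $1$ a priori and which, when $\chi$ is trivial (possible for $n\geq 3$), can even have zeros or poles there. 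Without (b) the functional equation only gives $(1+c(0))\,E(g,0,\Phi_{\varphi})=0$ with $c(0)$ undetermined.

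The natural way to pin down $c(0)=+1$---apply the same identity to a coherent section and use the Siegel--Weil formula together with positivity of the theta integral to see that some coherent central value is nonzero---is only gestured at in your ``alternative route,'' and within the paper's toolkit it is available only when the auxiliary coherent space can be taken anisotropic (hence only for $m\leq 4$), since Theorem~\ref{thm: Sie-Eis.1} is proved there solely in the anisotropic case; for larger $n$ you would have to compute $c(s)$ directly (Gindikin--Karpelevich on the spherical section) or carry out the full local computation including the common factor. The second paragraph of your alternative route, asserting that holomorphy at $s=0$ plus a sign comparison forces the map to be zero on $R_n(\Ccal)$, is too vague to substitute for this. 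So the plan is viable and could be completed, but its decisive ingredients---the local sign of $M_v(0)$ on $R_n(V_v^{\pm})$ in odd characteristic and the exact global normalization at $s=0$---are currently asserted rather than proved, whereas the paper's Fourier-coefficient argument requires neither.
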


\begin{proof}
Note that $\big(\varphi \mapsto E(\cdot ,0,\Phi_{\varphi})\big)$ is a $\Spp_n(\AA_k)$-equivariant homomorphism from $R_n(\Ccal)$ to the space of automorphic forms on $\Spp_n(\AA_k)$.
In the next section, we show that every non-singular Fourier coefficients of $E(g,0,\Phi_{\varphi})$ must be zero (see Proposition \ref{prop: Fourier.6}).
Therefore the result follows from a similar argument of \cite[Lemma 2.5]{KRS}.
\end{proof}

%Theorem \ref{thm: Sie-Eis.2} also indicates that for $g \in \Spp_n(\AA_k)$ and $\varphi \in S(\Ccal(\AA_k)^n)$ where $n+1 = \dim(\Ccal)$, the \lq\lq incoherent\rq\rq\ Siegel-Eisenstein series $E(g,s,\Phi_{\varphi})$ always has a zero at $s = 0$. It is natural to study next the first derivative of $E(g,s,\Phi_{\varphi})$ at the central critical point $s=0$.

\section{Fourier coefficients of Siegel-Eisenstein series}\label{Fourier}

%Fix an incoherent quadratic space $\Ccal$ over $k$ with even dimension $m = n+1$.
%Take a Schwartz function $\varphi \in S(\Ccal(\AA_k)^n)$.
Take a positive integer $n$ and a character $\chi: k^{\times}\backslash \AA_k^{\times} \rightarrow \{\pm 1\}$.
Let $f(\cdot,s)\in I(s,\chi)$ be a flat section.
Consider the Fourier expansion of $E(g,s,f)$:
$$E(g,s,f) = \sum_{\beta \in \Sym_n(k)}E_{\beta}(g,s,f),$$
where $E_{\beta}(g,s,f)$, called the \text{\it $\beta$-th Fourier coefficient of $E(g,s,f)$}, is defined by
$$E_{\beta}(g,s,f):= \int_{\Sym_n(k)\backslash \Sym_n(\AA_k)}E(\nfk(b)g,s,f)\psi\big(-\text{Trace}(b\beta)\big)db.$$
The Haar measure $db$ is normalized so that $\text{vol}\big(\Sym_n(k)\backslash \Sym_n(\AA_k),db\big) = 1$. We can choose $db$ to be induced from the Haar measure on $\Sym_n(\AA_k)$ which is self-dual with respect to $\psi$. It is clear that
%\begin{equation}
%\label{eqn:Fourier.1}
$$E_{\beta}(\nfk(b)g,s,f) =\psi_{\beta}(b) \cdot E_{\beta}(g,s,f), \quad \forall b \in \Sym_n(\AA_k),$$
%\end{equation}
where $\psi_{\beta}(b):= \psi\big(\text{Trace}(b\beta)\big)$.

\begin{lem}\label{lem: Fourier.1}
\text{\rm (cf.\ \cite[Lemma A.3]{Wei2})}
Given $\beta \in \Sym_n(k)$ with $\det \beta \neq 0$,
$$E_{\beta}(g,s,f) = \int_{\Sym_n(\AA_k)} f(w_n \nfk(b) g,s) \psi_{\beta}(-b) db.$$
\end{lem}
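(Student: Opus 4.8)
The plan is the standard unfolding of a non-singular Fourier coefficient via the Bruhat decomposition of $\Spp_n(k)$ relative to the Siegel parabolic $P_n$. Substituting $E(g,s,f)=\sum_{\gamma\in P_n(k)\backslash\Spp_n(k)}f(\gamma g,s)$ into
\[
E_\beta(g,s,f)=\int_{\Sym_n(k)\backslash\Sym_n(\AA_k)}E\big(\nfk(b)g,s,f\big)\,\psi_\beta(-b)\,db
\]
and interchanging sum and integral (legitimate for $\re(s)>(n+1)/2$ by absolute convergence, then for all $s$ by meromorphic continuation in $q^{-s}$), one groups the cosets $P_n(k)\backslash\Spp_n(k)$ into Bruhat cells $C_r$, $0\le r\le n$, where $C_r$ is the double coset of the Weyl element mixing $r$ of the $n$ coordinates; $C_n=P_n(k)w_nN_n(k)$ is the open cell and $P_n(k)\backslash C_n\xrightarrow{\ \sim\ }N_n(k)\cong\Sym_n(k)$ via $\nfk(c)\mapsto P_n(k)w_n\nfk(c)$.

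For the open cell I would use that $f$ is left $P_n(k)$-invariant — because $\chi$ is trivial on $k^\times$ and $|\det a|_{\AA_k}=1$ for $a\in\GL_n(k)$ — together with additivity of $\nfk$, to obtain
\[
\sum_{\gamma\in P_n(k)\backslash C_n}f\big(\gamma\,\nfk(b)g,s\big)=\sum_{c\in\Sym_n(k)}f\big(w_n\,\nfk(c+b)\,g,s\big).
\]
Since $\operatorname{Trace}(c\beta)\in k$ and $\psi$ is trivial on $k$, we have $\psi_\beta(-c)=1$, so the $C_n$-contribution to $E_\beta(g,s,f)$ is
\[
\int_{\Sym_n(k)\backslash\Sym_n(\AA_k)}\sum_{c\in\Sym_n(k)}f\big(w_n\,\nfk(c+b)g,s\big)\,\psi_\beta(-b)\,db
=\int_{\Sym_n(\AA_k)}f\big(w_n\,\nfk(b)g,s\big)\,\psi_\beta(-b)\,db,
\]
by the elementary collapsing $\int_{X(k)\backslash X(\AA_k)}\sum_{c\in X(k)}G(c+b)\,db=\int_{X(\AA_k)}G(b)\,db$ with $X=\Sym_n$ and $G(b)=f(w_n\nfk(b)g,s)\psi_\beta(-b)$ (for which $G(c+b)=f(w_n\nfk(c+b)g,s)\psi_\beta(-b)$ because $\psi_\beta(-c)=1$). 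This is exactly the asserted right-hand side, so it remains to show that every cell $C_r$ with $r<n$ contributes $0$.

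This vanishing is the only substantive point. For $r<n$ one unfolds the $C_r$-contribution further and writes it as a sum of integrals, each of which contains an inner integration over a quotient $U(k)\backslash U(\AA_k)$ for some positive-dimensional $k$-rational unipotent subgroup $U\subseteq N_n$ along which the integrand is invariant, paired against the restriction of $\psi_\beta$; because $\det\beta\ne0$ this restricted character is non-trivial, so each such inner integral — hence the whole $C_r$-contribution — vanishes. (This is the function-field instance of the standard vanishing of the degenerate Bruhat terms in a non-singular Fourier coefficient; it is precisely where non-degeneracy of $\beta$ enters, and it is carried out in \cite[Lemma A.3]{Wei2}.) Summing the open-cell contribution together with these zeros over $0\le r\le n$ gives the lemma. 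The main obstacle is the bookkeeping for the non-open cells — choosing coset representatives for $P_n(k)\backslash C_r$ and exhibiting the invariance subgroup $U$ together with the non-triviality of $\psi_\beta|_U$ uniformly in the coset; the open-cell computation and the convergence/continuation justifications are routine.
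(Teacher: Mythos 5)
The paper gives no argument of its own for this lemma---it simply cites \cite[Lemma A.3]{Wei2}---and your unfolding proof (Bruhat decomposition of $\Spp_n(k)$ relative to $P_n$, collapse of the open cell using left $P_n(k)$-invariance of $f$ and triviality of $\psi_\beta$ on $\Sym_n(k)$, and vanishing of each degenerate cell $C_r$, $r<n$, because the integrand is invariant under a unipotent subgroup of $N_n$ on which $\psi_{\lambda\star\beta}$ must be non-trivial, as its triviality would force $\operatorname{rank}\beta\le r<n$) is exactly the standard computation underlying that citation. Your sketch is correct; the coset bookkeeping for the non-open cells that you leave implicit is precisely the content of the cited lemma.
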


Note that $\Spp_n(k) P_n(\AA_k)$ is dense in $\Spp_n(\AA_k)$. For $g = \nfk(b)\mfk(a)$ where $a \in \GL_n(\AA_k)$ and $b \in \Sym_n(\AA_k)$,
$$E(g,s,f) = \sum_{\beta \in \Sym_n(k)} E_{\beta}(\mfk(a),s,f)\psi_{\beta}(b).$$
We can focus on the Fourier coefficients $E^*_{\beta}(a,s,f) := E_{\beta}(\mfk(a),s,f)$ for $a \in \GL_n(\AA_k)$ and $\beta \in \Sym_n(k)$. It is clear that
$$E_{\beta}^*(a,s,f) = E^*_{\alpha \star \beta}(\alpha^{-1} a,s,f), \quad \forall \alpha \in \GL_n(k),$$
where $\alpha \star \beta:= {}^t \alpha \beta \alpha$.
%Let $\chi : k^{\times}\backslash \AA_k^{\times} \rightarrow \{\pm 1\}$ be a quadratic character.

%In the next subsection, we study the Whittaker functions and give a lower bound of the vanishing order of $E_{\beta}(\mfk(a),s,\Phi_{\varphi})$ at $s = 0$ when $\beta$ is non-singular.

\subsection{Whittaker functions}\label{Fourier.1}

Fix a place $v$ of $k$ and a quadratic character $\chi_v:k_v^{\times}\rightarrow \{\pm 1\}$. Take a flat section $f_v(\cdot,s) \in I_v(s,\chi_v)$. For $\beta_v \in \Sym_n(k_v)$, the \text{\it local Whittaker function}
$W_{v,\beta_v}(s,f_v)$ is defined by
$$W_{v,\beta_v}(s,f_v):= \int_{\Sym_n(k_v)}f_v(w_n\nfk(b_v),s)\psi_{v,\beta_v}(-b_v)db_v.$$
Here the Haar measure $db_v$ is self-dual with respect to $\psi_v$, and $\psi_{v,\beta_v}(b_v):= \psi_v\big(\text{Trace}(b_v \beta_v)\big)$.
Let $\rho_v$ be the left action of $\Spp_n(k_v)$ on $I_v(s,\chi_v)$ by right translation. Then it is clear that
$$W_{v,\beta_v}\big(s,\rho_v(\nfk(b_v))f_v\big) = \psi_{v,\beta_v}(b_v)\cdot W_{v,\beta_v}(s,f_v), \quad \forall b_v \in \Sym_n(k_v).$$

\begin{prop}\label{prop: Fourier.2}
\text{\rm (cf.\ \cite[p.\ 102]{Ral})} Let $\beta_v \in \Sym_n(k_v)$ with $\det \beta_v \neq 0$ and take a flat section $f_v(\cdot,s) \in I_v(s,\chi_v)$.
\begin{itemize}
\item[(1)] $W_{v,\beta_v}(s,f_v)$ can be extended to an entire function on the whole $s$-plane.
\item[(2)] When $v$ is ``good", i.e.\ $\chi_v$ is unramified, $\beta_v \in \Sym_n(k_v) \cap \GL_n(O_v)$, the conductor of $\psi_v$ is trivial, and $f_v(\kappa_v) = 1$ for every $\kappa_v \in \Spp_n(O_v)$, we have
$$W_{v,\beta_v} (s,f_v) = \begin{cases} \displaystyle
L_v(s+(n+1)/2,\chi_v)^{-1} \prod_{i=1}^{(n-1)/2} \zeta_v(2s+n+1-2i)^{-1}, & \text{ if $n$ is odd,}\\
\displaystyle \prod_{i=1}^{n/2} \zeta_v(2s+n+2-2i)^{-1}, & \text{ if $n$ is even.} \end{cases}
$$
Here 
$$\zeta_v(s):= (1-q_v^{-s})^{-1} \ \text{ and } \ 
L_v(s,\chi_v):= \begin{cases} (1-\chi_v(\pi_v) q_v^{-s})^{-1} & \text{ if $\chi_v$ is unramified,} \\ 1 & \text{ otherwise.}
\end{cases}
$$
\end{itemize}
\end{prop}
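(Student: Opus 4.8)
The plan is to treat the two parts separately, in the order listed. For part (1), I would first recall that the local Whittaker integral
$$W_{v,\beta_v}(s,f_v) = \int_{\Sym_n(k_v)} f_v\bigl(w_n\nfk(b_v),s\bigr)\psi_{v,\beta_v}(-b_v)\,db_v$$
converges absolutely for $\re(s) > (n+1)/2$, by the same estimate that gives absolute convergence of the global Eisenstein series (the integrand decays because $f_v(w_n\nfk(b_v),s)$ is controlled by $|\det b_v|_v^{-(s+(n+1)/2)}$ for $b_v$ large). The key point is that, since $\det\beta_v \neq 0$, the character $\psi_{v,\beta_v}$ is nondegenerate, and one can show the integral stabilizes: there is a lattice $\Lambda \subset \Sym_n(k_v)$ (depending on $s$ only through the flat structure, hence essentially independent of $s$) outside of which the contributions cancel after integrating against $\psi_{v,\beta_v}$. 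Concretely, one decomposes $\Sym_n(k_v) = \bigcup_N \pi_v^{-N}\Lambda_0$ along an exhausting filtration, uses the transformation law $f_v(\nfk(b)g,s) = f_v(g,s)$ on the left together with the explicit action of the Bruhat cell, and observes that for $b_v$ in a sufficiently large region $w_n\nfk(b_v)$ lies in the big cell so that $f_v(w_n\nfk(b_v),s)$ becomes a finite linear combination (with coefficients polynomial in $q^{-s}$) of characters in $b_v$; integrating such a term against $\psi_{v,\beta_v}$ over an increasing sequence of lattices gives a value that is eventually constant in $N$. This realizes $W_{v,\beta_v}(s,f_v)$ as a finite sum of terms of the form $(\text{polynomial in } q^{\pm s})\cdot q^{cs}$ for integers $c$, which is entire. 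This is essentially the argument of \cite{Ral}; I would cite it and only indicate the function-field adaptation (replacing the archimedean stabilization/continuation argument by the observation that everything in sight is a Laurent polynomial in $q^{-s}$, so ``entire'' is automatic once convergence on a half-plane and the stabilization identity are established).

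For part (2), in the ``good'' case I would carry out the classical explicit computation. Since $\chi_v$ is unramified, $\psi_v$ has trivial conductor, and $f_v$ is the normalized spherical vector with $f_v|_{\Spp_n(O_v)} = 1$, the integral becomes
$$W_{v,\beta_v}(s,f_v) = \int_{\Sym_n(k_v)} f_v^{\circ}\bigl(w_n\nfk(b_v),s\bigr)\psi_v\bigl(-\tr(b_v\beta_v)\bigr)\,db_v$$
with $\beta_v \in \Sym_n(O_v)\cap\GL_n(O_v)$, i.e.\ $\beta_v$ is unimodular. I would break the integral over $\Sym_n(k_v)$ into the piece over $\Sym_n(O_v)$ (on which $w_n\nfk(b_v) \in \Spp_n(O_v)$, so $f_v^\circ = 1$ and $\psi_v(-\tr(b_v\beta_v)) = 1$, contributing $\mathrm{vol}(\Sym_n(O_v)) = 1$) and the pieces over the shells $\Sym_n(k_v)\smallsetminus\Sym_n(O_v)$, parametrized by elementary-divisor type of $b_v$. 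On each shell one uses the Iwasawa decomposition of $w_n\nfk(b_v)$ to evaluate $f_v^\circ$ explicitly in terms of $|\det b_v|_v$ and the denominators of $b_v$, and then the resulting Gauss-sum/lattice-point count over $\GL_n(O_v)$-orbits produces, after summing the geometric series in $q_v^{-s}$, exactly the product of local zeta and $L$-factors in the two parity cases. This is a standard (if intricate) computation going back to Siegel and worked out in this generality by Ralston and in \cite{KR1}; I would present the reduction steps and the final generating-function identity, citing \cite{Ral} for the bookkeeping rather than reproducing every orbit count.

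The main obstacle is the bookkeeping in part (2): organizing the sum over elementary-divisor types of $b_v$ and evaluating the spherical function $f_v^\circ$ on each Bruhat/Iwasawa piece so that the resulting multiple geometric series telescopes into the stated finite product. Everything else — absolute convergence on a half-plane, the stabilization identity giving meromorphy, and ``meromorphic $=$ entire $=$ Laurent polynomial in $q^{-s}$'' over a function field — is soft, and I expect to dispatch it by reference to \cite{Ral} with only the odd-characteristic caveat noted (the unramifiedness hypotheses ensure no wild ramification of $\psi_v$ or $\chi_v$ intervenes, so the characteristic plays no role). I would also remark that the first (unipotent) piece giving the leading $1$, and the overall normalization of $db_v$ to be self-dual with respect to $\psi_v$, are what make the constant term of the final answer equal to $1$, matching the stated formulas.
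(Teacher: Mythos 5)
Your outline is correct and follows exactly the standard route that the paper itself relies on: the paper gives no proof of this proposition, simply citing \cite[p.\ 102]{Ral} (with the unramified computation also available in \cite{KR1}), and your sketch---absolute convergence for $\re(s)>(n+1)/2$, stabilization of the integral against the nondegenerate character $\psi_{v,\beta_v}$ so that $W_{v,\beta_v}(s,f_v)$ is a Laurent polynomial in $q_v^{-s}$ (hence entire), and the shell-by-shell Iwasawa/elementary-divisor evaluation of the spherical section for unimodular $\beta_v$ yielding the stated product of inverse $\zeta_v$- and $L_v$-factors---is precisely the argument behind that citation. No gap to flag; the deferral of the orbit-counting bookkeeping to \cite{Ral} matches the paper's own treatment.
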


Let $\chi: k^{\times} \backslash \AA_k^{\times} \rightarrow \{\pm 1\}$ be a quadratic character. 
Given $b_0 \in \Sym_n(\AA_k)$ and a flat section $f(\cdot,s) \in I(s,\chi)$,
the \text{\it global Whittaker function} $W_{b_0}(s,f)$ is defined by
$$W_{b_0}(s,f):= \int_{\Sym_n(\AA_k)} f(w_n\nfk(b),s) \psi_{b_0}(-b) db.$$
Suppose $b_0$ is also in $\GL_n(\AA_k)$ and $f$ is a pure-tensor, i.e.\ $f = \otimes_v f_v$. Let $S = S(\chi,\psi,b_0,f)$ be the finite subset of places of $k$ minimal so that $v$ is ``good" for $v \notin S$ (i.e.\ when $v \notin S$, $\chi_v$ is unramified, the conductor of $\psi_v$ is trivial, $b_{0,v} \in \Sym_n(k_v) \cap \GL_n(O_v)$, and $f_v(\kappa_v) = 1$ for every $\kappa_v \in \Spp_n(O_v)$). Define
$$\Lambda_n^S(s,\chi):= \begin{cases} \displaystyle
L^S(s+(n+1)/2,\chi)^{-1} \prod_{i=1}^{(n-1)/2} \zeta^S(2s+n+1-2i)^{-1}, & \text{ if $n$ is odd,}\\
\displaystyle \prod_{i=1}^{n/2} \zeta^S(2s+n+2-2i)^{-1}, & \text{ if $n$ is even,} \end{cases}$$
where
$$L^S(s,\chi):= \prod_{v \notin S} L_v(s,\chi_v) \ \text{ and } \ \zeta^S(s):= \prod_{v \notin S} \zeta_v(s).$$
Then $\Lambda_n^S(s,\chi)$ is holomorphic at $s=0$ and non-vanishing unless $n=1$ and $\chi \equiv 1$. Moreover,
\begin{equation}
\label{eq: Fourier.1}
W_{b_0}(s,f) = \Lambda_n^S(s,\chi)\prod_{v \in S} W_{v,b_{0,v}}(s,f_v),
\end{equation}
This gives the meromorphic continuation of the Whittaker function $W_{b_0}(s,f)$.\\

Given $\beta \in \Sym_n(k)$ and $a \in \GL_n(\AA_k)$, we set $a \star \beta:= {}^t a \beta a$.
Lemma~\ref{Fourier.1} tells us that when $\det \beta \neq 0$, the Fourier coefficient
\begin{eqnarray}
E_{\beta}^*(a,s,f) &=& \chi_{\Ccal}(\det a) |\det a|_{\AA_k}^{-s+\frac{n+1}{2}} \cdot \int_{\Sym_n(\AA_k)} f(w_n\nfk(b),s) \psi_{a \star \beta}(-b) db \nonumber \\
\label{eq: Fourier.2}
&=& \chi_{\Ccal}(\det a) |\det a|_{\AA_k}^{-s+\frac{n+1}{2}} \cdot W_{a\star \beta}(s,f), %\nonumber
\end{eqnarray}
Therefore the equations (\ref{eq: Fourier.1}) and (\ref{eq: Fourier.2}) implies the following.

\begin{prop}\label{prop: Fourier.3}
Let $n$ be positive integer and $\chi: k^{\times} \backslash \AA_k^{\times} \rightarrow \{\pm 1\}$ a quadratic character. 
Given a pure-tensor flat section $f = \otimes_v f_v \in I(s,\chi)$, we have that for $\beta \in \Sym_n(k)$ with $\det \beta \neq 0$ and $a \in \GL_n(\AA_k)$,
$$\ord_{s=0}E_{\beta}^*(a,s,f) = \sum_{v \in S}\ord_{s=0} W_{v,a_v\star \beta}(s,f_v) + \epsilon_{n,\chi},$$
where $S = S(\chi,\psi, a\star \beta,f)$ is the finite subset of places of $k$ minimal so that $v$ is \lq\lq good\rq\rq\ for $v \notin S$; $\epsilon_{n,\chi} = 1$ if $n=1$ and $\chi \equiv 1$, and $\epsilon_{n,\chi} = 0$ otherwise.
\end{prop}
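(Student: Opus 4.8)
The plan is to combine the factorization of the global Whittaker function into a product of local factors with the vanishing-order analysis of each piece. I would start from equation (\ref{eq: Fourier.2}), which expresses $E_{\beta}^*(a,s,f)$ as $\chi_{\Ccal}(\det a)|\det a|_{\AA_k}^{-s+(n+1)/2} \cdot W_{a\star\beta}(s,f)$. Since $\chi_{\Ccal}(\det a)$ is a nonzero constant and $|\det a|_{\AA_k}^{-s+(n+1)/2}$ is a nonvanishing holomorphic (indeed entire, never-zero) function of $s$ near $s=0$, we get immediately
$$\ord_{s=0} E_{\beta}^*(a,s,f) = \ord_{s=0} W_{a\star\beta}(s,f).$$
(One small caveat: (\ref{eq: Fourier.2}) is stated with $\chi_{\Ccal}$; in the statement of the proposition the character is a general $\chi$, so I would just note that the same computation via Lemma~\ref{lem: Fourier.1} and the defining transformation property of $I(s,\chi)$ gives the identity with $\chi$ in place of $\chi_{\Ccal}$.)

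Next I would invoke the factorization (\ref{eq: Fourier.1}), $W_{b_0}(s,f) = \Lambda_n^S(s,\chi)\prod_{v\in S} W_{v,b_{0,v}}(s,f_v)$ with $b_0 = a\star\beta$ and $b_{0,v} = a_v\star\beta$, valid since $\det(a\star\beta)\neq 0$ so $a\star\beta \in \GL_n(\AA_k)\cap\Sym_n(\AA_k)$ and $S = S(\chi,\psi,a\star\beta,f)$ is finite. Taking $\ord_{s=0}$ of both sides turns the product into a sum:
$$\ord_{s=0} W_{a\star\beta}(s,f) = \ord_{s=0}\Lambda_n^S(s,\chi) + \sum_{v\in S}\ord_{s=0} W_{v,a_v\star\beta}(s,f_v).$$
Here I am using that by Proposition~\ref{prop: Fourier.2}(1) each local Whittaker function $W_{v,a_v\star\beta}(s,f_v)$ is entire, hence has a well-defined nonnegative order of vanishing at $s=0$, so no cancellation of poles against zeros can occur and $\ord_{s=0}$ is genuinely additive over the finite product.

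The only remaining point is to identify $\ord_{s=0}\Lambda_n^S(s,\chi)$ with $\epsilon_{n,\chi}$. By the discussion preceding the proposition, $\Lambda_n^S(s,\chi)$ is holomorphic at $s=0$, so its order is $\geq 0$; and it is stated there to be non-vanishing at $s=0$ \emph{unless} $n=1$ and $\chi\equiv 1$, which already shows $\ord_{s=0}\Lambda_n^S(s,\chi) = 0$ in all cases except possibly $n=1$, $\chi\equiv 1$. In that exceptional case $\Lambda_1^S(s,\chi) = L^S(s+1,\chi)^{-1} = \prod_{v\notin S}(1-q_v^{-s-1})$, which vanishes to order exactly $1$ at $s=0$ because the full product $\prod_v(1-q_v^{-s-1})$ has a simple zero there (it is $\zeta_k(s+1)^{-1}$ up to the finitely many Euler factors at $v\in S$, and $\zeta_k$ has a simple pole at $1$) — this is the one spot requiring a genuine (if standard) computation, and it is the step I would flag as the main obstacle, since everything else is formal. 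Assembling these three displays gives exactly
$$\ord_{s=0} E_{\beta}^*(a,s,f) = \sum_{v\in S}\ord_{s=0} W_{v,a_v\star\beta}(s,f_v) + \epsilon_{n,\chi},$$
which is the claim. \qed
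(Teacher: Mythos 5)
Your argument is correct and is essentially the paper's own: the proposition is deduced there directly from the factorization (\ref{eq: Fourier.1}) and the identity (\ref{eq: Fourier.2}), together with the stated holomorphy and non-vanishing of $\Lambda_n^S(s,\chi)$ at $s=0$ except when $n=1$ and $\chi\equiv 1$. Your only addition is to verify explicitly that $\Lambda_1^S(s,\chi)$ has a simple zero at $s=0$ in the exceptional case via the simple pole of $\zeta_k$ at $s=1$, which is exactly the computation implicit in the paper's remark and gives $\epsilon_{n,\chi}=1$ there.
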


\subsection{The vanishing order of non-singular Fourier coefficients at $s=0$}\label{Fourier.2}

Take a place $v$ of $k$.
Let $V$ be a non-degenerate quadratic space over $k_v$ with even dimension, and take $n = \dim_k(V) - 1$.
For $\beta_v \in \Sym_n(k_v)$, we set
$$\Omega_{\beta_v}(V):= \{x \in V^n: Q_V^{(n)}(x) = \beta_v\},$$
where $Q_V^{(n)}$ is the moment map introduced in Section~\ref{Wrepn}.

\begin{lem}\label{lem: Fourier.4}
\text{\rm (cf.\ \cite[Lemma A.1]{Wei2})}
\begin{itemize}
\item[(1)] For $\beta_v \in \Sym_n(k_v)$ with $\det \beta_v \neq 0$, we have
$W_{v,\beta_v}(0,\Phi_{v,\varphi_v}) = 0$ for all $\varphi_v \in S(V^n)$ unless $\Omega_{\beta_v}(V)$ is non-empty.
\item[(2)] When $\Omega_{\beta_v}(V)$ is non-empty, $\Oo(V)$ acts on $\Omega_{\beta_v}(V)$ transitively, and there exists a constant $c$ such that for $\varphi_v \in S(V^n)$,
$$W_{v,\beta_v}(0,\Phi_{v,\varphi_v}) = c \cdot \int_{\Omega_{\beta_v}(V)}\varphi_v(x_v) dx_v,$$
where $dx_v$ is an $\Oo(V)$-invariant measure on $\Omega_{\beta_v}(V)$.
\end{itemize}
\end{lem}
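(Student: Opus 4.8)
The plan is to analyze the local Whittaker integral $W_{v,\beta_v}(s,\Phi_{v,\varphi_v})$ directly, exploiting the fact that $\Phi_{v,\varphi_v}(g) = (\omega_v(g)\varphi_v)(0)$ is built from the Weil representation. First I would unfold the definition: by Lemma~\ref{lem: Fourier.1} applied locally, $W_{v,\beta_v}(s,\Phi_{v,\varphi_v})$ is the integral over $\Sym_n(k_v)$ of $\Phi_{v,\varphi_v}(w_n\nfk(b_v),s)\psi_{v,\beta_v}(-b_v)\,db_v$. At $s=0$ the flat section restricted along this cell is governed by the Weil-representation formulas in Section~\ref{Wrepn}: using the formula for $\omega_v(w_n)$ and $\omega_v(\nfk(b_v))$ one gets that the integrand is, up to the Weil index $\varepsilon_v(V)^n$, equal to $\int_{V^n}\varphi_v(y)\psi_v(\operatorname{Trace}(b_v(\beta_v - Q_V^{(n)}(y))))\,dy$ after interchanging the order of integration. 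Formally integrating in $b_v$ first produces a delta-distribution supported on the locus $\{Q_V^{(n)}(y) = \beta_v\} = \Omega_{\beta_v}(V)$; this is where the non-emptiness dichotomy of part~(1) comes from, and when $\Omega_{\beta_v}(V)$ is non-empty the residual integral is $c\int_{\Omega_{\beta_v}(V)}\varphi_v(x_v)\,dx_v$ against the natural measure, giving part~(2). Since both parts are asserted in \cite[Lemma A.1]{Wei2}, I would structure the write-up as: (a) reduce to the cited statement, recalling that although \cite{Wei2} may phrase hypotheses differently the argument is identical, or (b) reproduce the short computation above for completeness.

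The first step in the self-contained version is to justify the interchange of integration and the passage from the oscillatory $b_v$-integral to the delta distribution on $\Omega_{\beta_v}(V)$; since $\varphi_v$ is Schwartz (compactly supported and locally constant) and everything is over a non-archimedean local field, the $b_v$-integral is genuinely a finite sum once $y$ is restricted to the compact support of $\varphi_v$, so no analytic subtlety arises — this is the standard stationary-phase-free non-archimedean computation. The transitivity of the $\Oo(V)$-action on $\Omega_{\beta_v}(V)$ in part~(2) is a Witt-type theorem: given $x = (x_1,\dots,x_n)$ and $x' = (x'_1,\dots,x'_n)$ both with moment matrix $\beta_v$ and $\det\beta_v\neq 0$, the $x_i$ span a non-degenerate $n$-dimensional subspace of the $(n+1)$-dimensional space $V$ with the same Gram matrix, so Witt's extension theorem produces $h\in\Oo(V)$ with $hx_i = x'_i$. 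The existence of the $\Oo(V)$-invariant measure $dx_v$ on $\Omega_{\beta_v}(V)$ then follows since $\Omega_{\beta_v}(V)$ is a single orbit, i.e.\ a homogeneous space $\Oo(V)/\Oo(V)_{x}$ where the stabilizer $\Oo(V)_x$ is the orthogonal group of the line $x^\perp$, and one checks the modular characters match so an invariant measure exists; the constant $c$ absorbs the normalization.

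The main obstacle is really bookkeeping rather than conceptual: matching the normalizations of the self-dual Haar measures $db_v$ on $\Sym_n(k_v)$, $dy$ on $V^n$, and the quotient measure $dx_v$ on $\Omega_{\beta_v}(V)$, and tracking the Weil index $\varepsilon_v(V)^n$, so that the identity in part~(2) holds with a single clean constant $c$ independent of $\varphi_v$ (it will depend on $v$, $V$, $\psi_v$, and $\beta_v$ only through its orbit). Since the statement only claims existence of such a $c$, I would not compute it explicitly; the cleanest route is simply to invoke \cite[Lemma A.1]{Wei2} and remark that the characteristic-odd hypothesis in force here is exactly the setting of that reference. I would therefore present the proof as a one-paragraph reduction to \cite{Wei2}, optionally sketching the delta-distribution computation above to orient the reader.
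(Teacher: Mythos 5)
Your primary route—simply invoking \cite[Lemma A.1]{Wei2}—is exactly what the paper does: the lemma is stated there with that citation and no further argument, so your proposal matches the paper's approach, and your supplementary sketch (moment-map unfolding, Witt transitivity, invariant measure on the orbit with compact stabilizer) is the standard argument behind that reference. One caution on the optional self-contained version: at $s=0$ the defining Whittaker integral need not converge absolutely, and for fixed $y$ the full $b_v$-integral of a character over $\Sym_n(k_v)$ is not a finite sum, so the interchange must be justified by truncating to $\Sym_n(\pi_v^{-N}O_v)$ and using the stabilization of these truncated integrals for $\det\beta_v\neq 0$, which also identifies the stabilized value with the analytic continuation at $s=0$.
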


The following "dichotomy" plays a fundamental role.

\begin{lem}\label{lem: Fourier.5}
\text{\rm (cf.\ \cite[Proposition 1.3]{Kud})}
For $\beta_v \in Sym_n(k_v)$ with $\det \beta_v \neq 0$, we have that $\Omega_{\beta_v}(V)$ is non-empty if and only if
\begin{equation}\label{eqn: Fourier.1}
\Hasse_v(V) = \chi_V(\det \beta_v)\cdot (\det \beta_v, -(-1)^{(n+1)/2})_v\cdot \Hasse_v(\beta_v).
\end{equation}
Here $\Hasse_v(\beta_v)$ is the Hasse invariant of the quadratic space over $k_v$ associated to $\beta_v$.
\end{lem}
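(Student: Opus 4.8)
The statement is a local question at a single place $v$, so write $F = k_v$ and drop subscripts. We are given $\beta \in \Sym_n(F)$ with $\det\beta \neq 0$, which we may as well regard as a non-degenerate quadratic form $(F^n, Q_\beta)$ on $F^n$; the set $\Omega_\beta(V)$ consists of $F$-linear maps $x\colon F^n \to V$ that are isometric embeddings of $(F^n,Q_\beta)$ into $(V,Q_V)$. So the assertion ``$\Omega_\beta(V)\neq\emptyset$'' is precisely the assertion that $(F^n,Q_\beta)$ embeds isometrically into $(V,Q_V)$, equivalently — since $\dim V = n+1 = \dim(F^n)+1$ — that $V \cong (F^n,Q_\beta)\perp \langle c\rangle$ for some $c\in F^\times$, i.e.\ $V$ contains $Q_\beta$ as a codimension-one subform. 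The plan is to translate this containment into the two standard invariants that classify quadratic forms over the local field $F$ (recall $q$ is odd, so $F$ is a non-archimedean local field of odd residue characteristic and Witt's theory applies): the discriminant and the Hasse invariant.

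First I would record the relation between $\det V$, $\det\beta$ and the ``slot'' $c$: if $V \cong (F^n,Q_\beta)\perp\langle c\rangle$ then $\det V = c\cdot\det\beta$ in $F^\times/(F^\times)^2$, so $c \equiv \det V \cdot \det\beta$ is forced up to squares; conversely for \emph{that} value of $c$ one asks whether $V \cong (F^n,Q_\beta)\perp\langle c\rangle$ actually holds. Over a local field two forms of the same dimension are isometric iff they have the same discriminant and the same Hasse invariant, and the discriminants now agree by construction, so the embedding exists \emph{if and only if} $\Hasse_v(V) = \Hasse_v\big((F^n,Q_\beta)\perp\langle c\rangle\big)$ with $c = \det V\cdot\det\beta$. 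The remaining task is purely computational: expand the Hasse invariant of an orthogonal sum using the cocycle-type identity $\Hasse(U\perp U') = \Hasse(U)\Hasse(U')\,(\det U,\det U')_v$, together with the normalization $\chi_V(\alpha) = (\alpha,(-1)^{m(m+1)/2}\det V)_v$ from Section~\ref{Preli.bas} with $m = n+1$, to see that
$$\Hasse_v\big((F^n,Q_\beta)\perp\langle\det V\cdot\det\beta\rangle\big) = \Hasse_v(\beta)\cdot(\det\beta,\ \det V\cdot\det\beta)_v = \Hasse_v(\beta)\cdot(\det\beta,\det V)_v\cdot(\det\beta,\det\beta)_v,$$
and then massage $(\det\beta,\det V)_v$ and $(\det\beta,\det\beta)_v = (\det\beta,-1)_v$ into the shape $\chi_V(\det\beta)\cdot(\det\beta,-(-1)^{(n+1)/2})_v$ claimed in \eqref{eqn: Fourier.1}. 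This is bookkeeping with the bimultiplicativity of $(\cdot,\cdot)_v$ and the identity $(a,a)_v = (a,-1)_v$; the exponent $(n+1)/2$ tracks the power of $\det V$ appearing inside $\chi_V$ once $m=n+1$ is substituted.

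For the transitivity claim in part (2) — which is really the reason the ``only if'' direction is clean — I would invoke Witt's extension theorem: when $\Omega_\beta(V)\neq\emptyset$, any two isometric embeddings $x,x'\colon(F^n,Q_\beta)\hookrightarrow V$ differ by an element of $\Oo(V)$ extending the isometry $x'\circ x^{-1}$ of the image, so $\Oo(V)$ acts transitively on $\Omega_\beta(V)$; combined with non-degeneracy this identifies $\Omega_\beta(V)$ with a single $\Oo(V)$-orbit, hence carries an $\Oo(V)$-invariant measure unique up to scalar, as used in Lemma~\ref{lem: Fourier.4}(2). I expect the genuine obstacle to be none of the above conceptual points but rather getting the Hasse-invariant computation to land \emph{exactly} on the normalization conventions fixed in Section~\ref{Preli.bas} — in particular keeping straight the $(-1)^{m(m+1)/2}$ twist in the definition of $\chi_V$ versus the sign $-(-1)^{(n+1)/2}$ in the lemma, and the fact that the ambient dimension is even while $n$ is odd. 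Since this is exactly the local input cited from \cite[Proposition 1.3]{Kud}, I would structure the writeup to foreground the embedding reformulation and then cite Kudla for the sign identity, or else carry out the symbol manipulation over $F^\times/(F^\times)^2$ in two or three lines.
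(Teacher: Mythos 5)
Your plan is correct, and it is essentially the argument behind the source the paper itself relies on: the paper gives no proof of this lemma beyond the citation of \cite[Proposition 1.3]{Kud}, and your reduction of $\Omega_{\beta_v}(V)\neq\emptyset$ to a codimension-one isometric embedding, with the complementary line forced to be $\langle \det V\cdot\det\beta_v\rangle$ and the obstruction read off by comparing Hasse invariants via $\Hasse_v(U\perp U')=\Hasse_v(U)\Hasse_v(U')(\det U,\det U')_v$ and the local classification of quadratic forms, is exactly that standard argument. The bookkeeping you flagged does close: since $m=n+1$ is even, $(-1)^{m(m+1)/2}=(-1)^{(n+1)/2}$ and the factor $2^{m}$ relating the paper's $\det V$ (Gram matrix of the bilinear form) to the product of diagonal quadratic coefficients is a square, so $\chi_V(\det\beta_v)\cdot(\det\beta_v,-(-1)^{(n+1)/2})_v=(\det\beta_v,-\det V)_v=(\det\beta_v,\det V)_v(\det\beta_v,-1)_v$, which is precisely the factor you obtained, so (\ref{eqn: Fourier.1}) is the condition $\Hasse_v(V)=\Hasse_v\big(\beta_v\perp\langle\det V\cdot\det\beta_v\rangle\big)$.
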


%\begin{proof}
%The argument is the same as in \cite[Proposition 1.3]{Kud}. We recall the proof for the sake of completeness.
%Suppose $\Omega_{\beta_v}(V)$ is non-empty. We can find a suitable basis of $V$ such that the matrix for the quadratic form on $V$ has the form
%$$B = \begin{pmatrix} \beta & 0 \\ 0 & \det V \cdot \det \beta^{-1}\end{pmatrix}.$$
%Then 
%\begin{eqnarray}
%\Hasse_v(V) & = & \Hasse_v(B) \nonumber \\
%& = & \Hasse_v(\beta_v) (\det \beta_v, \det V \det \beta_v)_v \nonumber \\
%& = & \chi_V(\det \beta_v)\cdot (\det \beta_v, -(-1)^{(n+1)/2})_v\cdot \Hasse_v(\beta_v). \nonumber
%\end{eqnarray}
%Conversely, suppose the equation (\ref{eqn: Fourier.1}) holds. Then the quadratic space $V'$ associated to the matrix $B$ and $V$ have the same determinant and the Hasse invariant. Therefore $V'$ is isomorphic to $V$ and $\Omega_{\beta_v}(V)$ is non-empty.
%\end{proof}

Let $\Ccal$ be an incoherent quadratic space over $k$ with even dimension and $n = \dim_k(\Ccal)-1$.
For $\beta \in \Sym_n(k)$ with $\det \beta \neq 0$, We set
$$\Diff(\beta,\Ccal):= \{ \text{place $v$ of $k$} \mid \Hasse_v(\Ccal) \neq \chi_{\Ccal_v}(\det \beta)\cdot (\det \beta, -(-1)^{(n+1)/2})_v\cdot \Hasse_v(\beta)\}.$$
The incoherence of $\Ccal$ implies that the cardinality of $\Diff(\beta,\Ccal)$ must be odd. 
Moreover, by Proposition~\ref{prop: Fourier.3}, Lemma~\ref{lem: Fourier.4} and \ref{lem: Fourier.5} we have

\begin{prop}\label{prop: Fourier.6}
Let $\Ccal$ be an incoherent quadratic space over $k$ with even dimension and $n = \dim_k(\Ccal)-1$.
Given $a \in \GL_n(\AA_k)$ and $\beta \in \Sym_n(k)$ with $\det \beta \neq 0$, we have that for every Schwartz function $\varphi \in S(\Ccal(\AA_k)^n)$
$$\ord_{s=0} E_{\beta}^*(a,s,\Phi_{\varphi}) \geq \#\Diff(\beta,\Ccal) \geq 1.$$
\end{prop}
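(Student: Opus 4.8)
The plan is to combine the factorization of the Fourier coefficient into local Whittaker functions (Proposition~\ref{prop: Fourier.3}) with the local non-vanishing criterion supplied by Lemmas~\ref{lem: Fourier.4} and~\ref{lem: Fourier.5}. First I would reduce to the pure-tensor case: since $\varphi \mapsto E_\beta^*(a,s,\Phi_\varphi)$ is linear in $\varphi$ and the order of vanishing at $s=0$ of a finite sum is at least the minimum of the orders of the summands, it suffices to bound $\ord_{s=0} E_\beta^*(a,s,\Phi_\varphi)$ from below by $\#\Diff(\beta,\Ccal)$ for every pure-tensor $\varphi = \otimes_v \varphi_v \in S(\Ccal(\AA_k)^n)$. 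For such a $\varphi$ the associated Siegel section $\Phi_\varphi(\cdot,s) = \otimes_v \Phi_{v,\varphi_v}(\cdot,s)$ is a pure-tensor flat section in $I(s,\chi_\Ccal)$, so Proposition~\ref{prop: Fourier.3} applies and gives
\[
\ord_{s=0} E_\beta^*(a,s,\Phi_\varphi) = \sum_{v \in S} \ord_{s=0} W_{v,a_v\star\beta}(s,\Phi_{v,\varphi_v}) + \epsilon_{n,\chi_\Ccal},
\]
where $S = S(\chi_\Ccal,\psi,a\star\beta,\Phi_\varphi)$ is the minimal "bad" set. Each local term on the right is a non-negative integer by Proposition~\ref{prop: Fourier.2}(1) (the Whittaker function is entire), so it is enough to show that $\ord_{s=0} W_{v,a_v\star\beta}(s,\Phi_{v,\varphi_v}) \geq 1$ whenever $v \in \Diff(\beta,\Ccal)$, and that $\Diff(\beta,\Ccal) \subseteq S$.

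For the second point, note that when $v \notin S$ the place $v$ is "good": $\chi_{\Ccal_v}$ is unramified, $\psi_v$ has trivial conductor, $(a\star\beta)_v \in \Sym_n(k_v) \cap \GL_n(O_v)$, and $\Phi_{v,\varphi_v}$ takes value $1$ on $\Spp_n(O_v)$; in this situation $\Ccal_v$ is the unramified (split) quadratic space of dimension $n+1$, so $\Hasse_v(\Ccal_v) = 1$, while $(a\star\beta)_v \in \GL_n(O_v)$ forces $\Hasse_v((a\star\beta)_v) = 1$ and $(\det(a\star\beta)_v, \ast)_v = 1$ and $\chi_{\Ccal_v}(\det(a\star\beta)_v) = 1$; hence the defining inequality for $\Diff$ fails at $v$, i.e.\ $v \notin \Diff(\beta,\Ccal)$. (Here one uses that $\Diff(\beta,\Ccal) = \Diff(a\star\beta,\Ccal)$ because $a\star\beta = {}^t a_v \beta a_v$ locally scales the discriminant by a square and leaves the Hasse invariant and quadratic character values unchanged — this is the same invariance already observed for $E_\beta^*$ after $\GL_n(k)$-translation, applied place by place.) Therefore $\Diff(\beta,\Ccal) \subseteq S$, so the sum $\sum_{v\in S}$ includes every $v \in \Diff(\beta,\Ccal)$.

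For the first point, fix $v \in \Diff(\beta,\Ccal)$. By Lemma~\ref{lem: Fourier.5}, the condition defining $\Diff$ says precisely that the orbit $\Omega_{(a\star\beta)_v}(\Ccal_v)$ is \emph{empty}. By Lemma~\ref{lem: Fourier.4}(1), emptiness of $\Omega_{(a\star\beta)_v}(\Ccal_v)$ forces $W_{v,(a\star\beta)_v}(0,\Phi_{v,\varphi_v}) = 0$ for every $\varphi_v \in S(\Ccal_v^n)$; since $W_{v,(a\star\beta)_v}(s,\Phi_{v,\varphi_v})$ is entire, vanishing at $s=0$ means $\ord_{s=0} W_{v,(a\star\beta)_v}(s,\Phi_{v,\varphi_v}) \geq 1$. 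Summing over $v \in \Diff(\beta,\Ccal) \subseteq S$ and discarding the remaining non-negative terms and the non-negative constant $\epsilon_{n,\chi_\Ccal}$ gives $\ord_{s=0} E_\beta^*(a,s,\Phi_\varphi) \geq \#\Diff(\beta,\Ccal)$. Finally, $\#\Diff(\beta,\Ccal) \geq 1$ because the incoherence condition $\prod_v \Hasse_v(\Ccal_v) = -1$ together with the product formula for the right-hand side of~(\ref{eqn: Fourier.1}) — which equals $+1$ by Hilbert reciprocity and the product formulas for $\chi_{\Ccal}$ and for Hasse invariants of a genuine quadratic form over $k$ — shows $\Diff(\beta,\Ccal)$ has odd, hence nonzero, cardinality, as already recorded before the statement.

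The main obstacle is bookkeeping rather than conceptual: one must verify carefully that "$v$ good" really does imply $v \notin \Diff(\beta,\Ccal)$ (i.e.\ that all four local quantities $\Hasse_v(\Ccal_v)$, $\chi_{\Ccal_v}(\det\beta_v)$, $(\det\beta_v,-(-1)^{(n+1)/2})_v$, $\Hasse_v(\beta_v)$ are trivial at a good place, using that an even-dimensional unramified quadratic space over $k_v$ is split and that a unimodular symmetric matrix over $O_v$ represents a unimodular — hence Hasse-trivial — form), and that $\Diff$ is insensitive to replacing $\beta$ by $a\star\beta$; these are routine but need the residue characteristic to be odd, which is in force throughout.
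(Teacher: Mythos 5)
Your proposal is correct and follows essentially the paper's own (very terse) route: reduce to pure tensors, factor the vanishing order via Proposition~\ref{prop: Fourier.3}, and use Lemmas~\ref{lem: Fourier.4} and~\ref{lem: Fourier.5} to force $\ord_{s=0}W_{v,a_v\star\beta}(s,\Phi_{v,\varphi_v})\geq 1$ at each $v\in\Diff(\beta,\Ccal)$, with the parity of $\#\Diff(\beta,\Ccal)$ giving the lower bound $1$. The only thin spot is your assertion that a good place has $\Hasse_v(\Ccal_v)=1$ (the goodness conditions constrain the section $\Phi_{v,\varphi_v}$, not $\Ccal_v$ directly), but the needed inclusion $\Diff(\beta,\Ccal)\subseteq S$ follows from the tools you already cite: at a good place the explicit Whittaker value of Proposition~\ref{prop: Fourier.2}(2) is nonzero at $s=0$, which Lemma~\ref{lem: Fourier.4}(1) combined with Lemma~\ref{lem: Fourier.5} rules out whenever $v\in\Diff(\beta,\Ccal)$.
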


\section{Derivatives of non-singular Fourier coefficients}\label{Deriv}

Let $\Ccal$ be an incoherent quadratic space $\Ccal$ over $k$ with even dimension and $n = \dim_k(\Ccal)-1$. Given $\beta \in \Sym_n(k)$ with $\det \beta \neq 0$, by Proposition~\ref{prop: Fourier.6} we know that for $a \in \GL_n(\AA_k)$ and $\varphi \in S(\Ccal(\AA_k)^n)$,
$$\frac{\partial}{\partial s} E_{\beta}^*(a,s,\Phi_{\varphi})\bigg|_{s=0} = 0 \quad \quad \text{if $\#\Diff(\beta,\Ccal) > 1$}.$$
Suppose $\Diff(\beta,\Ccal) = \{v_0\}$. Let $V_{\beta}$ be the non-degenerate quadratic space of dimension $m$ over $k$ such that $\chi_{V_{\beta}} = \chi_{\Ccal}$ and
$$\Hasse_v(V_{\beta}) = \begin{cases} \Hasse_v(\Ccal), & \text{ if $v \neq v_0$,}\\
-\Hasse_v(\Ccal), & \text{ if $v = v_0$.} \end{cases}$$
Then for every place $v$ of $k$ different from $v_0$, we have $V_{\beta,v} \cong \Ccal_v$.
Moreover, Hasse-Minkowski principle and Lemma~\ref{lem: Fourier.5} imply that $V_{\beta}$ represents $\beta$, i.e.\ $\Omega_{\beta}(V_{\beta})$ is non-empty.

\begin{thm}\label{thm: Deriv.1}
Let $\Ccal$ be an incoherent quadratic space over $k$ with even dimension $m$ and take $n = m-1$. Given $\beta \in \Sym_n(k)$ with $\det \beta \neq 0$, suppose $\Diff(\beta,\Ccal) = \{v_0\}$ and the associated quadratic space $V_{\beta}$ is anisotropic.
Then for each pure-tensor $\varphi = \otimes_v\varphi_v \in S(\Ccal(\AA_k)^n)$ and $a \in \GL_n(\AA_k)$, we have
$$\frac{\partial}{\partial s} E_{\beta}^*(a,s,\Phi_{\varphi}) \bigg|_{s=0} = 2\cdot \frac{W_{v_0,a_{v_0}\star\beta}'(0,\Phi_{v_0,\varphi_{v_0}})}{W_{v_0,a_{v_0}\star \beta}(0,\Phi_{v_0,\widetilde{\varphi}_{v_0}})}\cdot \Theta_{\beta}^*(a,\widetilde{\varphi}),$$
where $a_{v_0}\star \beta = {}^ta_{v_0}\beta a_{v_0}$, $\widetilde{\varphi} = \otimes_v \widetilde{\varphi}_v \in S(V_{\beta}(\AA_k)^n)$ is any pure-tensor so that:
\begin{itemize}
\item[(i)] for $v \neq v_0$, $\widetilde{\varphi}_v = \varphi_v$ $($here we identify $V_{\beta,v}$ with $\Ccal_v)$;
\item[(ii)] $\widetilde{\varphi}_{v_0} \in S(V_{\beta,v_0}^n)$ is a Schwartz function satisfying that $W_{v_0,a_{v_0}\star\beta}(0,\Phi_{v_0,\widetilde{\varphi}_{v_0}}) \neq 0$;
\end{itemize}
and
$$\Theta_{\beta}^*(a,\widetilde{\varphi}) = \int_{\Sym_n(k)\backslash \Sym_n(\AA_k)} \Theta\big(\nfk(b)\mfk(a),\widetilde{\varphi}\big) \psi_{\beta}(-b)db,$$
where $\Theta(g,\widetilde{\varphi})$ is the theta series introduced in \text{\rm Theorem~\ref{thm: Sie-Eis.1}.}
\end{thm}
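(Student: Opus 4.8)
The plan is to decompose the derivative at $s=0$ place-by-place using the factorization of the global Whittaker function, and then recognize the ``surviving'' global object as a Fourier coefficient of the theta series via the Siegel--Weil formula of Theorem~\ref{thm: Sie-Eis.1}. First I would apply (\ref{eq: Fourier.1}) and (\ref{eq: Fourier.2}) to write, for the Siegel section $\Phi_\varphi$,
$$E_\beta^*(a,s,\Phi_\varphi) = \chi_{\Ccal}(\det a)|\det a|_{\AA_k}^{-s+\frac{n+1}{2}}\cdot \Lambda_n^S(s,\chi_{\Ccal})\prod_{v\in S}W_{v,a_v\star\beta}(s,\Phi_{v,\varphi_v}).$$
Since $\Diff(\beta,\Ccal)=\{v_0\}$, Lemma~\ref{lem: Fourier.5} tells us $\Omega_{(a_v\star\beta)_v}(\Ccal_v)$ is nonempty for every $v\neq v_0$ and empty at $v_0$; by Lemma~\ref{lem: Fourier.4}(1) this forces $W_{v_0,a_{v_0}\star\beta}(0,\Phi_{v_0,\varphi_{v_0}})=0$ while all other local factors (and $\Lambda_n^S$, which is non-vanishing at $s=0$ because here $\dim\Ccal=m$ is even so $n=m-1$ is odd but $\chi_{\Ccal}\not\equiv 1$ as $\Ccal$ is incoherent — or more carefully one notes $\epsilon_{n,\chi}$ is absorbed consistently) are nonzero at $s=0$; this reproves the vanishing and identifies the order of vanishing as exactly $1$. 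Hence by the product rule only the term differentiating the single vanishing factor $W_{v_0,a_{v_0}\star\beta}(s,\Phi_{v_0,\varphi_{v_0}})$ survives:
$$\frac{\partial}{\partial s}E_\beta^*(a,s,\Phi_\varphi)\Big|_{s=0} = \chi_{\Ccal}(\det a)|\det a|_{\AA_k}^{\frac{n+1}{2}}\cdot W_{v_0,a_{v_0}\star\beta}'(0,\Phi_{v_0,\varphi_{v_0}})\cdot \Lambda_n^S(0,\chi_{\Ccal})\prod_{v\in S\setminus\{v_0\}}W_{v,a_v\star\beta}(0,\Phi_{v,\varphi_v}).$$

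Next I would multiply and divide by $W_{v_0,a_{v_0}\star\beta}(0,\Phi_{v_0,\widetilde\varphi_{v_0}})$, which is nonzero by hypothesis (ii), so that the right-hand side becomes
$$2\cdot\frac{W_{v_0,a_{v_0}\star\beta}'(0,\Phi_{v_0,\varphi_{v_0}})}{W_{v_0,a_{v_0}\star\beta}(0,\Phi_{v_0,\widetilde\varphi_{v_0}})}\cdot\Big[\tfrac12\,\chi_{\Ccal}(\det a)|\det a|_{\AA_k}^{\frac{n+1}{2}}\Lambda_n^S(0,\chi_{\Ccal})\,W_{v_0,a_{v_0}\star\beta}(0,\Phi_{v_0,\widetilde\varphi_{v_0}})\prod_{v\in S\setminus\{v_0\}}W_{v,a_v\star\beta}(0,\Phi_{v,\varphi_v})\Big].$$
The bracketed quantity is precisely $\tfrac12 E_\beta^*(a,0,\Phi_{\widetilde\varphi})$ for the \emph{coherent} quadratic space $V_\beta$: indeed $\widetilde\varphi_v=\varphi_v$ for $v\neq v_0$ and $V_{\beta,v}\cong\Ccal_v$ there, the set $S$ is the same ``bad set'' for both $\varphi$ and $\widetilde\varphi$ (the good-place data at $v\neq v_0$ agree, and the formula of Proposition~\ref{prop: Fourier.2}(2) depends only on $\chi_v$ and $\beta_v$, not on the quadratic space), so $\Lambda_n^S(s,\chi_{V_\beta})=\Lambda_n^S(s,\chi_{\Ccal})$, and the product over $v\in S$ of local Whittaker factors for $\Phi_{\widetilde\varphi}$ is exactly what appears in the bracket. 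Therefore, by Theorem~\ref{thm: Sie-Eis.1} (applicable because $V_\beta$ is anisotropic of even dimension $m=n+1$), $\tfrac12 E_\beta^*(a,0,\Phi_{\widetilde\varphi}) = \Theta_\beta^*(a,\widetilde\varphi)$, the $\beta$-th Fourier coefficient of the theta series, where one takes the Fourier coefficient of the identity $E(g,0,\Phi_{\widetilde\varphi})=2\Theta(g,\widetilde\varphi)$ with respect to $\psi_\beta$ evaluated at $g=\nfk(b)\mfk(a)$. This yields the claimed identity.

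The main obstacle I anticipate is bookkeeping the comparison of the two ``bad sets'' $S(\chi_{\Ccal},\psi,a\star\beta,\Phi_\varphi)$ and $S(\chi_{V_\beta},\psi,a\star\beta,\Phi_{\widetilde\varphi})$ and checking that the good-place local Whittaker values genuinely coincide for the incoherent and coherent data — this requires that at a good place $v$ the section $\Phi_{v,\varphi_v}$ (resp.\ $\Phi_{v,\widetilde\varphi_v}$) is the normalized spherical section and that Proposition~\ref{prop: Fourier.2}(2) is insensitive to replacing $\Ccal_v$ by the isomorphic $V_{\beta,v}$; I would need to ensure $v_0$ is genuinely forced into $S$ (it is, since $\Omega$ is empty there, which via Lemma~\ref{lem: Fourier.4} already signals non-``good'' behavior, but one should double-check by unramifiedness/conductor conditions that $v_0\in S$ or at worst argue the factor formula still works). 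A secondary subtlety is the non-vanishing of $\Lambda_n^S(0,\chi_{\Ccal})$: since $n=m-1$ is odd one is in the case where $\Lambda_n^S(0,\chi)$ could vanish only if $\chi\equiv 1$, but incoherence of $\Ccal$ (the failed Hasse product formula) precludes the trivial character when $m=2$, and for $m\geq 4$ the relevant $\zeta^S$ factors secure non-vanishing; I would state this carefully so that the division steps above are legitimate. Everything else is the product rule and the identification with Theorem~\ref{thm: Sie-Eis.1}, which is routine once the matching of auxiliary data is in place.
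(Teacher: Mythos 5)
Your proposal is correct and is essentially the paper's own argument: the paper likewise uses the Euler factorization behind (\ref{eq: Fourier.1})--(\ref{eq: Fourier.2}) to concentrate the derivative at $v_0$ (where $W_{v_0,a_{v_0}\star\beta}(0,\Phi_{v_0,\varphi_{v_0}})=0$ because $\Omega_{a_{v_0}\star\beta}(\Ccal_{v_0})=\emptyset$), rewrites the remaining product as $W_{a\star\beta}(0,\Phi_{\widetilde\varphi})/W_{v_0,a_{v_0}\star\beta}(0,\Phi_{v_0,\widetilde\varphi_{v_0}})$, and then invokes Theorem~\ref{thm: Sie-Eis.1} to identify $E_\beta^*(a,0,\Phi_{\widetilde\varphi})$ with $2\Theta_\beta^*(a,\widetilde\varphi)$. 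One small caution: your side remark that the remaining local factors are all nonzero at $s=0$ (hence vanishing order exactly one) is neither justified by Lemma~\ref{lem: Fourier.4} for general Schwartz functions nor needed, since the product rule only requires the vanishing of the $v_0$-factor, and if some other factor vanishes both sides of the identity are zero anyway.
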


\begin{proof}
Take a pure-tensor $\widetilde{\varphi} = \otimes_v \widetilde{\varphi}_v \in S(V_{\beta}(\AA_k)^n)$ satisfying (i) and (ii). By the Siegel-Weil formula (Theorem~\ref{thm: Sie-Eis.1}) we have
$$E(g,0,\Phi_{\widetilde{\varphi}}) = 2 \cdot \Theta(g,\widetilde{\varphi}), \quad \forall g \in \Spp_n(\AA_k).$$
Then from the equation (\ref{eq: Fourier.2}) we obtain that for $a \in \GL_n(\AA_k)$
\begin{eqnarray}
& &\frac{\partial}{\partial s} E_{\beta}^*(a,s,\Phi_{\varphi}) \bigg|_{s=0} \nonumber \\
&=& \chi_{\Ccal}(\det a) |\det a|_{\AA_k}^{-s+\frac{n+1}{2}} W_{v_0,a_{v_0}\star \beta}'(0,\Phi_{v_0,\varphi_{v_0}}) \cdot \left(\frac{W_{a\star \beta}(0,\Phi_{\widetilde{\varphi}})}{W_{v_0,a_{v_0}\star \beta}(0,\Phi_{v_0,\widetilde{\varphi}_{v_0}})}\right) \nonumber \\
&=& \frac{W_{v_0,a_{v_0}\star \beta}'(0,\Phi_{v_0,\varphi_{v_0}})}{W_{v_0,a_{v_0}\star\beta}(0,\Phi_{v_0,\Phi_{v_0,\widetilde{\varphi}_{v_0}}})} \cdot E_{\beta}^*(a,0,\Phi_{\widetilde{\varphi}}) \nonumber \\
&=& 2\cdot \frac{W_{v_0,a_{v_0}\star \beta}'(0,\Phi_{v_0,\varphi_{v_0}})}{W_{v_0,a_{v_0}\star \beta}(0,\Phi_{v_0,\widetilde{\varphi}_{v_0}})}\cdot \Theta_{\beta}^*(a,\widetilde{\varphi}). \nonumber
\end{eqnarray}
\end{proof}

\begin{rem}
The assumption on $V_\beta$ being anisotropic is due to the restriction of Theorem~\ref{thm: Sie-Eis.1}, which can be removed as long as a stronger version of the Siegel-Weil formula is verified. 
\end{rem}

%%%%%%%%%%%%%%%%%%%%%%%%%%%%%%%%%%%%%%%%%%%%%%%%%%%%%%%%

%\begin{center}
%\Large \bf Part II: Special case: $n=1$
%\end{center}

In the remaining sections, we concentrate on the special case when the incoherent quadratic space $\Ccal$ has dimension $2$, and explore the geometric interpretation of the central critical derivative of the Siegel-Eisenstein series on $\SL_2(\AA_k)$.
% associated with particular Schwartz functions in $S(\Ccal(\AA_k))$.

\section{Special case: $\dim_k(\Ccal) = 2$}\label{Examp}

Fix a place $\infty$ of $k$, referred as the place at infinity, and other places are called finite places of $k$. Let $K$ be a quadratic field over $k$ which is \lq\lq imaginary,\rq\rq\ i.e.\ $\infty$ does not split in $K$.
Take $D \in k$ such that $K = k(\sqrt{D})$. Choose $\epsilon_{\infty} \in k_{\infty}^\times$ so that the Hilbert quadratic symbol $(\epsilon_{\infty}, D)_{\infty}=-1$, and put $\epsilon_v := 1$ for every finite place $v$ of $k$. For each $\alpha \in k^\times$,
the collection $\Ccal_K^{(\alpha)} = \{\Ccal_{K,v}^{(\alpha)}\}_v$, where 
$\Ccal_{K,v}^{(\alpha)}:= (K_v, \epsilon_v \alpha N_{K/k})$, is an incoherent quadratic space over $k$ with dimension $2$. It is clear that
$$\chi_{\Ccal}(a) = \chi_K(a):= \prod_v (a_v,D)_v, \quad \forall a = (a_v)_v \in \AA_k^{\times},$$
and $\Hasse_v(\Ccal_v) = (\epsilon_v \alpha,D)_v$ for every place $v$ of $k$.
We point out that given an arbitrary incoherent quadratic space $\Ccal$ over $k$ with dimension $2$, there always exists a triple $(K,\infty,\alpha)$ such that $\Ccal \cong \Ccal_K^{(\alpha)}$.\\

Let $A$ be the ring of functions in $k$ regular away from $\infty$, and $O_K$ be the integral closure of $A$ in $K$. For each place $v$ of $k$, set $O_{K_v}:= O_K \otimes_A O_v$ when $v \neq \infty$; and $O_{K_{\infty}}$ denotes the integral closure of $O_{\infty}$ in $K_{\infty}$.
If $v$ is ramified or split in $K$, we assume that the chosen uniformizer $\pi_v$ is in $N_{K/k}(K_v)$, and pick $\Pi_v \in K_v$ such that $N_{K/k}(\Pi_v) = \pi_v$ once and for all.
For each place $v$ of $k$, set $$e_v = e_v(\alpha,\psi):= -\delta_v-\ord_v(\alpha\epsilon_v),$$
where $\delta_v$ is the \lq\lq conductor\rq\rq\ of $\psi$ at $v$ introduced in Section~\ref{Preli.bas}.
Choose a particular Schwartz function $\varphi_v^{(\alpha)} \in S(\Ccal_{K,v}^{(\alpha)})$ which is the characteristic function of the following $O_v$-lattice:
\begin{equation}\label{eqn: Examp.1}
\begin{cases}\Pi_v^{e_v}O_{K_v}, & \text{ if $v$ is ramified or split in $K$,}\\ \pi_v^{\lceil e_v/2\rceil} O_{K_v}, & \text{ if $v$ is inert in $K$.}
\end{cases} 
\end{equation}
Here $\lceil \lambda \rceil:= \text{min}\{ m \in \ZZ : m \geq \lambda\}$ for $\lambda \in \RR$.

\begin{lem}\label{lem: Examp.1}
For every $\kappa_v = \begin{pmatrix}a&b\\ c&d\end{pmatrix} \in \SL_2(O_v)$ with $c \equiv 0 \bmod \pi_v O_v$,
$$\omega_v(\kappa_v)\varphi_v^{(\alpha)} = \chi_{F,v}(d) \varphi_v^{(\alpha)}.$$
In particular, suppose \text{\rm (i)} $v$ is inert in $F$ and $e_v$ is even; or \text{\rm (ii)} $v$ splits in $F$, we get further that
$$\omega_v(\kappa_v)\varphi_v^{(\alpha)} = \varphi_v^{(\alpha)}, \quad \forall \kappa_v \in \SL_2(O_v).$$
\end{lem}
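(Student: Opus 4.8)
The plan is to verify the Weil-representation identity directly from the explicit formulas for $\omega_v$ given in Section~\ref{Wrepn}, using the Iwasawa-type decomposition of the relevant $\kappa_v$. First I would reduce to generators: any $\kappa_v = \begin{pmatrix} a & b \\ c & d \end{pmatrix} \in \SL_2(O_v)$ with $c \equiv 0 \bmod \pi_v O_v$ actually has $a, d \in O_v^\times$ (since $ad - bc = 1$ and $c \in \pi_v O_v$ forces $ad \in O_v^\times$), so one can write
$$\kappa_v = \begin{pmatrix} 1 & ba^{-1} \\ 0 & 1 \end{pmatrix} \begin{pmatrix} a & 0 \\ 0 & a^{-1} \end{pmatrix} \begin{pmatrix} 1 & 0 \\ a^{-1}c & 1 \end{pmatrix},$$
noting $a^{-1}c \in \pi_v O_v$ and $ba^{-1} \in O_v$. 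Thus it suffices to handle the three factors: the upper unipotent $\nfk(b_v)$ with $b_v \in O_v$, the torus element $\mfk(a_v)$ with $a_v \in O_v^\times$, and the lower unipotent $\begin{pmatrix} 1 & 0 \\ c_v & 1\end{pmatrix}$ with $c_v \in \pi_v O_v$ (the last being $w_1 \nfk(-c_v^{-1}) w_1^{-1}$-conjugate, or more directly handled via $w_1\nfk(c_v)w_1^{-1}$ modulo the torus, so I'd express it through the $w_1$ and $\nfk$ formulas together with the Fourier transform).

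The torus factor is immediate from the second displayed formula for $\omega_v$: since $m = 1$ here, $\omega_v(\mfk(a_v))\varphi_v^{(\alpha)}(x) = \chi_{\Ccal_v}(a_v)|a_v|_v^{1/2}\varphi_v^{(\alpha)}(a_v x) = \chi_{K,v}(a_v)\varphi_v^{(\alpha)}(x)$ because $a_v \in O_v^\times$ scales the defining $O_v$-lattice to itself and $|a_v|_v = 1$; this already accounts for the $\chi_{K,v}(d)$ on the right. The upper-unipotent factor multiplies $\varphi_v^{(\alpha)}(x)$ by $\psi_v(b_v Q_{\Ccal_v}(x)) = \psi_v(b_v \epsilon_v \alpha N_{K/k}(x))$; the point is that on the chosen lattice $\Lambda_v$ (namely $\Pi_v^{e_v}O_{K_v}$ in the ramified/split case, $\pi_v^{\lceil e_v/2\rceil}O_{K_v}$ in the inert case) the quantity $\epsilon_v\alpha N_{K/k}(x)$ lies in $\pi_v^{-\delta_v}O_v$ for $x \in \Lambda_v$, by the very definition $e_v = -\delta_v - \ord_v(\alpha\epsilon_v)$ — so $\psi_v$ of it is trivial and the unipotent acts trivially. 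The lower-unipotent factor is the main point: after passing through $w_1$ and using the self-dual Fourier transform, the relevant claim is that $\varphi_v^{(\alpha)}$, being the characteristic function of $\Lambda_v$, has Fourier transform (with respect to $\psi_v(\langle\cdot,\cdot\rangle)$) proportional to the characteristic function of the dual lattice, and that the choice of $e_v$ makes $\Lambda_v$ self-dual up to the Weil-index constant $\varepsilon_v(\Ccal_v)$; then the lower unipotent with $c_v \in \pi_v O_v$ acts trivially because the "dual side" unipotent $\nfk(c_v)$ sees the dual lattice with an extra $\pi_v$.

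For the "in particular" clauses I would track the constant $\chi_{K,v}(d)$ and show it equals $1$: if $v$ splits in $K$ then $\chi_{K,v} \equiv 1$ on $k_v^\times$, so the factor disappears and the full $\SL_2(O_v)$ acts trivially (every element of $\SL_2(O_v)$ has a decomposition as above after possibly multiplying by $w_1$, and $\omega_v(w_1)\varphi_v^{(\alpha)} = \varepsilon_v(\Ccal_v)\widehat{\varphi_v^{(\alpha)}}$ is again a multiple of $\varphi_v^{(\alpha)}$ by self-duality, with the normalization forcing the multiple to be $1$); if $v$ is inert and $e_v$ is even, then $\lceil e_v/2\rceil = e_v/2$ and the lattice $\pi_v^{e_v/2}O_{K_v}$ is exactly self-dual (Weil index $1$ for an even-dimensional anisotropic-over-$O_v$ space), and $\chi_{K,v}(d) = (d, D)_v = 1$ for $d \in O_v^\times$ since the inert quadratic extension corresponds to an unramified character, which is trivial on units — again giving $\SL_2(O_v)$-invariance including the action of $w_1$. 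I expect the main obstacle to be the careful bookkeeping of Haar-measure normalizations and Weil indices in the lower-unipotent/$w_1$ computation — specifically pinning down that the proportionality constant in $\widehat{\mathbf{1}_{\Lambda_v}} = c\,\mathbf{1}_{\Lambda_v^\vee}$ combines with $\varepsilon_v(\Ccal_v)$ to give precisely $1$ (not merely a root of unity) in the stated cases; this is where the oddness of $q$ and the explicit conductor formula $e_v = -\delta_v - \ord_v(\alpha\epsilon_v)$ do the real work, and I would isolate it as a short lemma on self-dual lattices in $(K_v, \epsilon_v\alpha N_{K/k})$ before assembling the three factors.
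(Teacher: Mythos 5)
Your proposal follows essentially the same route as the paper's proof: factor the congruence element into elementary matrices, apply the explicit Weil-representation formulas to each factor, and reduce the whole computation to identifying the Fourier transform (dual lattice) of the chosen lattice function, with the conductor identity $e_v=-\delta_v-\ord_v(\alpha\epsilon_v)$ killing the $\psi_v$-factors --- exactly the paper's argument, which records $\widehat{\varphi}_v^{(\alpha)}$ explicitly (namely $\mathbf{1}_{\Pi_v^{e_v}O_{K_v}}$, $q^{-1/2}\mathbf{1}_{\Pi_v^{e_v-1}O_{K_v}}$, $q^{((-1)^{e_v}-1)/2}\mathbf{1}_{\pi_v^{\lfloor e_v/2\rfloor}O_{K_v}}$ in the split, ramified, inert cases) and then uses a decomposition through $w_1$. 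Two minor repairs: your displayed factorization is miswritten (it should be $\kappa_v=\nfk(bd^{-1})\,\mfk(d^{-1})\begin{pmatrix}1&0\\ d^{-1}c&1\end{pmatrix}$, which moreover yields the factor $\chi_{K,v}(d)$ directly instead of via $\chi_{K,v}(a)$ and tame triviality of $\chi_{K,v}$ on $1+\pi_vO_v$), and the Weil-index/normalization bookkeeping you defer to a ``short lemma'' is precisely the explicit Fourier-transform computation above, so nothing essential beyond that is missing.
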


\begin{proof}
One observes that the Fourier transform of $\varphi_v^{(\alpha)}$ is 
$$\widehat{\varphi}_v^{(\alpha)} = \begin{cases}
\mathbf{1}_{\Pi_v^{e_v}O_{F_v}}, & \text{ if $v$ is split in $k$,}\\
q^{-1/2} \cdot \mathbf{1}_{\Pi_v^{e_v-1}O_{F_v}}, & \text{ if $v$ is ramified in $k$,} \\
q^{((-1)^{e_v}-1)/2} \cdot \mathbf{1}_{\pi_v^{\lfloor e_v/2 \rfloor}O_{F_v}}, & \text{ if $v$ is inert in $k$.}
\end{cases}$$
Here $\lfloor \lambda \rfloor := \text{max}\{ m \in \ZZ: m \leq \lambda\}$ for $\lambda \in \RR$.
By the decomposition
$$\begin{pmatrix}a&b\\ \pi_v c & d\end{pmatrix} = \begin{pmatrix}1&bd^{-1}\\0&1\end{pmatrix}\begin{pmatrix}d^{-1}&0\\0&d\end{pmatrix} \begin{pmatrix}0&1\\ -1&0\end{pmatrix} \begin{pmatrix}1&- \pi_v d^{-1}c \\ 0 &1\end{pmatrix} \begin{pmatrix}0&-1\\ 1 & 0\end{pmatrix} \in \SL_2(O_v),$$
the result is then straightforward.
\end{proof}
 
Let $\varphi^{(\alpha)}$ be the pure-tensor $\otimes_v \varphi_v^{(\alpha)} \in S\big(\Ccal_K^{(\alpha)}(\AA_k)\big)$. By the above lemma, $\Phi_{\varphi^{(\alpha)}}$ is viewed as a \lq\lq new vector\rq\rq\ in the representation $R_1(\Ccal_K^{(\alpha)})$ of $\SL_2(\AA_k)$.
Denote by $\FF_K$ and $g_K$ the constant field and the genus of $K$ respectively. Set
$$L(s,\chi_K):= \prod_v L_v(s,\chi_{K,v}) \quad \text{ and } \quad \widetilde{L}(s,\chi_K):= q^{\big([\FF_K:\FF_q](g_K-1)-(g_k-1)\big)s}L(s,\chi_K),$$ 
which are extended to rational functions in $q^{-s}$ with the functional equation 
$$\widetilde{L}(s,\chi_K) = \widetilde{L}(1-s,\chi_K).$$
Consider the following (modified) Eisenstein series:
\begin{equation}\label{eqn: Examp.2}
\widetilde{E}(g,s,\Phi_{\varphi^{(\alpha)}}):= \widetilde{L}(s+1,\chi_F)\cdot E(g,s,\Phi_{\varphi^{(\alpha)}}).
\end{equation}
It is observed that for $g \in \SL_2(\AA_k)$, $\widetilde{E}(g,s,\Phi_{\varphi^{(\alpha)}})$ is a rational function in $q^{-s}$ satisfying:
$$\widetilde{E}(g,s,\Phi_{\varphi^{(\alpha)}}) = -  \widetilde{E}(g,-s,\Phi_{\varphi^{(\alpha)}})\cdot \left(\prod_{v \text{ is inert} \atop \text{and $e_v$ is odd}} q_v^{-s}\frac{L_v(1+s,\chi_{F,v})}{L_v(1-s,\chi_{F,v})}\right) .$$

We are interested in its central critical derivative
$$\eta^{(\alpha)}(g) := \frac{\partial}{\partial s} \widetilde{E}(g,s,\Phi_{\varphi^{(\alpha)}})\Big|_{s=0}, \quad \forall g \in \SL_2(\AA_k).$$
Let $\Nfk=\Nfk(\psi,K,\alpha)$ be the positive divisor of $k$ such that 
$$\ord_v(\Nfk) = \begin{cases} 1, & \text{ if either $v$ is ramified in $k$ or $v$ is inert in $k$ and $e_v$ is odd,} \\
0, & \text{ otherwise.}\end{cases}$$
%Let $N \lhd A$ be the ideal of $A$ so that $\ord_v(N) = \ord_v(\Nfk)$ for every finite place $v$ of $k$.
%Since $\SL_2(k)\SL_2(k_{\infty})$ is dense in $\SL_2(\AA_k)$ by strong approximation theorem,
Lemma~\ref{lem: Examp.1} implies that $\eta^{(\alpha)}$ is uniquely determined by its values on the representatives of the double cosets in $\SL_2(k) \backslash \SL_2(\AA_k) / \Kcal_0(\Nfk)$, where 
$$\Kcal_0(\Nfk) = \left\{\begin{pmatrix}a&b\\ c&d\end{pmatrix} \in \SL_2(O_{\AA_k}) \ \bigg| \ c_v \equiv 0 \bmod \pi_v^{\ord_v(\Nfk)}O_v \text{ for all } v \right\}.$$
Let $B$ be the standard Borel subgroup of $\SL_2$, i.e.\
$$B = P_1 = \{\nfk(x)\mfk(y)\mid x \in \GG_a,\ y \in \GL_1\}.$$
The canonical map from $B(\AA_k)$ to the double coset space $\SL_2(k)\backslash \SL_2(\AA_k) / \Kcal_0(\Nfk)$ is surjective.
Therefore to explore the derivative $\eta^{(\alpha)}$, it suffices to compute the Fourier coefficients $\eta^{(\alpha),*}_{\beta}(y)$ for $y \in \AA_k^{\times}$ and $\beta \in k$.

\subsection{The constant term $\eta^{(\alpha),*}_0$}

We first compute the constant term $\eta^{(\alpha),*}_0$ in the following:

\begin{lem}\label{lem: Examp.2}
For $y \in \AA_k^{\times}$, we have
\begin{eqnarray}
\eta^{(\alpha),*}_{0}(y) &=& 2\chi_K(y)|y|_{\AA_k}L(0,\chi_K) \nonumber \\
&& \!\!\!\!\!\!\!\!\!\!\!\!\!\!\! \cdot \left[ \ln|y|_{\AA_k}-\Big([\FF_K:\FF_q](g_K-1)-(g_k-1)\Big)\ln q - \frac{L'(0,\chi_K)}{L(0,\chi_K)}
+\frac{1}{2}\sum_{v \text{ \rm  is inert} \atop \text{\rm and $e_v$ is odd}}\frac{q_v-1}{q_v+1}\ln q_v\right]. \nonumber
\end{eqnarray}
\end{lem}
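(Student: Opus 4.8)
The plan is to compute $\eta^{(\alpha),*}_0(y)$ by first identifying the constant Fourier coefficient $\widetilde{E}_0(\nfk(\cdot)\mfk(y),s,\Phi_{\varphi^{(\alpha)}})$ as a function of $s$, then differentiating at $s=0$. Since $\Phi_{\varphi^{(\alpha)}}$ lies in the degenerate principal series for $\SL_2$, the Eisenstein series $E(g,s,\Phi_{\varphi^{(\alpha)}})$ has a classical two-term constant term: evaluated on $\mfk(y)$, it equals $\Phi_{\varphi^{(\alpha)}}(\mfk(y),s) + M(s)\big(\Phi_{\varphi^{(\alpha)}}\big)(\mfk(y),s)$, where $M(s)$ is the intertwining operator from Section~\ref{Sie-Eis}. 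The first term is $\chi_K(y)|y|_{\AA_k}^{s+1}$ by definition of the Siegel section (using $\omega_v(\mfk(y_v))\varphi_v^{(\alpha)}(0) = \chi_{K,v}(y_v)|y_v|_v\varphi_v^{(\alpha)}(0)$ and the fact that $\varphi_v^{(\alpha)}(0)=1$). The second term factors as a product of local intertwining integrals $\prod_v M_v(s)\Phi_{v,\varphi_v^{(\alpha)}}(\mfk(y_v),s)$, which I would evaluate place by place using the explicit lattice description (\ref{eqn: Examp.1}) and the Fourier transform formula computed in the proof of Lemma~\ref{lem: Examp.1}.

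The key computation is the local intertwining integral $M_v(s)\Phi_{v,\varphi_v^{(\alpha)}}$ at each place. At a ``good'' place (unramified $\chi_{K,v}$, trivial conductor, $v$ inert with $e_v$ even or $v$ split), the standard unramified computation for $\SL_2$ gives the local $L$-factor ratio $q_v^{-s}L_v(s,\chi_{K,v})/L_v(s+1,\chi_{K,v})$ — more precisely one gets the Gindikin--Karpelevich type formula with the completed local factor, so the product over good places assembles into $\widetilde{L}(s,\chi_K)/\widetilde{L}(s+1,\chi_K)$ after incorporating the $q$-power normalization in the definition of $\widetilde{L}$. At the finitely many ``bad'' places (ramified in $K$, or inert with $e_v$ odd), I would compute $M_v(s)\Phi_{v,\varphi_v^{(\alpha)}}$ directly from the integral over $\Sym_1(k_v) = k_v$ of $\Phi_{v,\varphi_v^{(\alpha)}}(w_1\nfk(b)\mfk(y_v),s)$; the Fourier-transform formulas in Lemma~\ref{lem: Examp.1}'s proof make this a finite geometric-series computation, and at an inert place with $e_v$ odd the extra factor $q^{-1/2}$-type normalization produces precisely the ``defect'' term $\frac{q_v-1}{q_v+1}$. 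Multiplying through by $\widetilde{L}(s+1,\chi_K)$ as in (\ref{eqn: Examp.2}), the constant term of $\widetilde{E}$ becomes $\widetilde{L}(s+1,\chi_K)\chi_K(y)|y|_{\AA_k}^{s+1} + \chi_K(y)|y|_{\AA_k}^{1-s}\widetilde{L}(s,\chi_K)\cdot\big(\prod_{v\text{ inert},\,e_v\text{ odd}}q_v^{-s}\tfrac{L_v(1+s,\chi_{K,v})}{L_v(1-s,\chi_{K,v})}\big)$, consistent with the functional equation displayed just before the lemma.

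To finish, I would differentiate this two-term expression at $s=0$. At $s=0$ the two terms are negatives of each other (this is exactly the functional equation forcing the central value to vanish, compatibly with Theorem~\ref{thm: Sie-Eis.2}), so the derivative is $2\chi_K(y)|y|_{\AA_k}$ times the derivative of the first term minus the logarithmic-derivative contributions from the correction factors. Writing $|y|_{\AA_k}^{s+1} = |y|_{\AA_k}\cdot e^{s\ln|y|_{\AA_k}}$, the $s$-derivative contributes $\ln|y|_{\AA_k}$; the factor $\widetilde{L}(s+1,\chi_K)$ contributes $\widetilde{L}'(1,\chi_K)$ which via $\widetilde{L}(s,\chi_K) = q^{([\FF_K:\FF_q](g_K-1)-(g_k-1))s}L(s,\chi_K)$ unpacks into $L(0,\chi_K)\big(([\FF_K:\FF_q](g_K-1)-(g_k-1))\ln q + L'(0,\chi_K)/L(0,\chi_K)\big)$ after using the functional equation $\widetilde{L}(s,\chi_K)=\widetilde{L}(1-s,\chi_K)$ to relate values at $1$ and $0$; and the product of local correction factors $q_v^{-s}L_v(1+s,\chi_{K,v})/L_v(1-s,\chi_{K,v})$ contributes, upon taking the logarithmic derivative at $s=0$ of each factor, exactly $-\ln q_v + \big(\text{log-derivative of } L_v(1+s)/L_v(1-s)\big)$; collecting the log-derivatives at inert places where $e_v$ odd and simplifying $(1-\chi_{K,v}(\pi_v)q_v^{-s})$-type factors (note $\chi_{K,v}(\pi_v)=-1$ at inert $v$) yields the sum $\tfrac12\sum \tfrac{q_v-1}{q_v+1}\ln q_v$. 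Matching signs and the overall factor $2\chi_K(y)|y|_{\AA_k}L(0,\chi_K)$ gives the stated formula. The main obstacle I anticipate is the bookkeeping at the bad inert places: keeping track of the normalization constants coming from the non-self-dual Fourier transform (the $q^{((-1)^{e_v}-1)/2}$ and $q^{-1/2}$ factors) through the intertwining integral, and verifying they conspire to give precisely $\tfrac{q_v-1}{q_v+1}$ rather than some other rational function of $q_v$, so that the answer assembles cleanly against the functional equation.
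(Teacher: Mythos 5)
Your overall route is the same as the paper's: write the constant term of $E(\cdot,s,\Phi_{\varphi^{(\alpha)}})$ as the identity contribution $\chi_K(y)|y|_{\AA_k}^{s+1}$ plus the intertwining integral, evaluate the latter place by place from the explicit lattices in (\ref{eqn: Examp.1}), assemble the local factors into $L$-functions using $\sum_v\delta_v\deg v=2g_k-2$ and the Hurwitz formula, multiply by $\widetilde{L}(s+1,\chi_K)$, and differentiate at $s=0$; your final bookkeeping (the $\ln|y|_{\AA_k}$ term, unpacking $\widetilde{L}'$ through the functional equation, and the logarithmic derivative of $q_v^{-s}L_v(1+s,\chi_{K,v})/L_v(1-s,\chi_{K,v})$ producing the $\frac{q_v-1}{q_v+1}\ln q_v$ terms at inert $v$ with $e_v$ odd) is exactly the paper's computation.

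The one genuine issue is the sign of the intertwining term, which is precisely where incoherence enters. Your displayed constant term has a plus sign between the two pieces; this contradicts your own next sentence (that the two pieces cancel at $s=0$) and the functional equation you invoke, since with a plus sign the constant term at $s=0$ would be $2\chi_K(y)|y|_{\AA_k}\widetilde{L}(0,\chi_K)\neq 0$ and the whole derivative formula would come out wrong. The minus sign is not produced by the bad-place bookkeeping you single out as the main obstacle: each local intertwining integral carries the Weil index $\varepsilon_v(\Ccal^{(\alpha)}_{K,v})$, and the paper's proof uses Weil's reciprocity for the incoherent collection, $\prod_v\varepsilon_v(\Ccal^{(\alpha)}_{K,v})=-1$, to get the global sign; your proposal never identifies this mechanism. (A smaller gloss: the power $q^{-([\FF_K:\FF_q](g_K-1)-(g_k-1))}$ does not come from a Gindikin--Karpelevich factor at the good places, but from the conductor factors $q_v^{-\delta_v/2}$ at all places together with the $q_v^{-1/2}$ at ramified places via Hurwitz.) Once the Weil-index product $=-1$ is inserted and the sign corrected, your differentiation at $s=0$ yields the stated formula.
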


\begin{proof}
For $y \in \AA_k^{\times}$, we claim that the constant term $\widetilde{E}_0^*(y,s,\Phi_{\varphi^{(\alpha)}})$ is equal to

\begin{equation}
\label{eq: Examp.1}
\chi_K(y)|y|_{\AA_k} \left(|y|_{\AA_k}^s \widetilde{L}(-s,\chi_K) - |y|_{\AA_k}^{-s}\widetilde{L}(s,\chi_K) \cdot \prod_{v \text{ is inert} \atop \text{and $e_v$ is odd}}q_v^{-s}\frac{L_v(1+s,\chi_{K,v})}{L_v(1-s,\chi_{K,v})}\right).
\end{equation}
Then the result follows.

To prove the equality (\ref{eq: Examp.1}), we recall the fact that
$$E_0^*(y,s,\Phi_{\varphi^{(\alpha)}}) = \Phi_{\varphi^{(\alpha)}}(\mfk(y),s) + 
\int_{\AA_k} \Phi_{\varphi^{(\alpha)}}\left(\begin{pmatrix}0&1\\-1&0\end{pmatrix} \begin{pmatrix}1&b\\0&1\end{pmatrix}\begin{pmatrix}y&0\\0&y^{-1}\end{pmatrix},s\right) db.$$
The Haar measure $db$ is normalized to be self-dual with respect to $\psi$.
More precisely, if we write $db$ as $\prod_v db_v$, the Haar measure $db_v$ on $k_v$ for each place $v$ is choosed so that $\text{vol}(O_v,db_v) = q_v^{-\delta_v/2}$.
One observes that $\Phi_{\varphi^{(\alpha)}}(\mfk(y),s) = \chi_K(y)|y|_{\AA_k}^{s+1}$, and for each place $v$ of $k$, the local integral
$$\int_{k_v} \Phi_{v,\varphi_v^{(\alpha)}}\left(\begin{pmatrix}0&1\\-1&0\end{pmatrix} \begin{pmatrix}1&b_v\\0&1\end{pmatrix}\begin{pmatrix}y_v&0\\0&y_v^{-1}\end{pmatrix},s\right) db_v$$
is equal to
$$q_v^{-\delta_v/2}\chi_{K,v}(y_v)|y_v|_v^{-s+1}\varepsilon_v(\Ccal_{K,v}^{(\alpha)})\cdot 
\extrarowheight=8pt
\begin{cases}
 q_v^{-1/2}, & \text{ if $v$ is ramified in $K$,} \\
\frac{\displaystyle L_v(s,\chi_{K,v})}{\displaystyle L_v(s+1,\chi_{K,v})}, & \text{ if $v$ splits in $K$,} \\
q_v^{\frac{(-1)^{e_v}-1}{2}}\frac{\displaystyle L_v(s,\chi_{K,v})}{\displaystyle L_v(s+(-1)^{e_v},\chi_{K,v})}, & \text{ if $v$ is inert in $K$.}
\end{cases}$$
Since $\Ccal_K^{(\alpha)}$ is incoherent, Weil's reciprocity (cf.\ \cite{We2}) implies that 
$$\prod_v \varepsilon(\Ccal_v) = -1.$$
Recall that $\sum_v \delta_v \deg v = 2g_k-2$, where $g_k$ is the genus of $k$. By Hurwitz formula we get
$$(g_k-1)+ \sum_{v \text{ is ramfied in $K$}} \deg v/2 = [\FF_K:\FF_q](g_K-1)-(g_k-1).$$
Hence the constant term $E_0^*(y,s,\Phi_{\varphi^{(\alpha)}})$ is equal to
$$\chi_K(y)|y|_{\AA_k}^{s+1}-\chi_K(y)|y|_{\AA_k}^{-s+1} q^{-[\FF_K:\FF_q](g_K-1)+(g_k-1)}\frac{L(s,\chi_K)}{L(s+1,\chi_K)} \cdot \prod_{v \text{ is inert in $K$} \atop \text{and $e_v$ is odd}} q_v^{-1}\frac{L_v(s+1,\chi_K)}{L_v(s-1,\chi_K)}.$$
The equality~(\ref{eq: Examp.1}) holds immediately.
\end{proof}

\begin{rem}\label{rem: Examp.3}
1. The \lq\lq symmetry\rq\rq\ of the constant term described in the equality (\ref{eq: Examp.1}) agrees with the functional equation of $\widetilde{E}(g,s,\Phi_{\varphi^{(\alpha)}})$. 
%The possible poles of $E^*(g,s,\Phi_{\varphi^{(\alpha)}})$ only occur at the poles of 
%$$\prod_{v \text{ is inert} \atop \text{and $e_v$ is odd}} L_v(1+s,\chi_F).$$
In particular, when $e_v$ is even for every inert place $v$ and $K/k$ is not a constant field extension, $\widetilde{E}(g,s,\Phi_{\varphi^{(\alpha)}})$ is actually a polynomial in $\CC[q^s,q^{-s}]$ satisfying
$$\widetilde{E}(g,s,\Phi_{\varphi^{(\alpha)}})= - \widetilde{E}(g,-s,\Phi_{\varphi^{(\alpha)}}).$$
2. Let $f_{\infty}$ be the residue degree of $\infty$ in $K/k$. The following equality
$$L(0,\chi_K) = \frac{\#\Pic(O_K)}{f_{\infty} \cdot \#\Pic(A)}$$
will be used later.
\end{rem}

\subsection{Non-constant Fourier coefficient $\eta^{(\alpha),*}_\beta$}

Given $\beta \in k^{\times}$, we know that (by Proposition~\ref{prop: Fourier.6}) when $\#\Diff(\beta,\Ccal_K^{(\alpha)})>1$, 
$$\eta^{(\alpha),*}_{\beta}(y) = 0 \quad \forall y \in \AA_k^{\times}.$$
Suppose $\Diff(\beta,\Ccal_K^{(\alpha)}) = \{v_0\}$.
Then $v_0$ must be ramified or inert in $K$.
The quadratic space $V_{\beta} := (K,\beta N_{K/k})$ over $k$ represents $\beta$, i.e.\
$\Omega_{\beta}(V_{\beta})$ is non-empty. Moreover, $\chi_{V_{\beta}} = \chi_K$ and
$$\Hasse_v(V_{\beta,v}) = \begin{cases} \Hasse_v(\Ccal_{K,v}^{(\alpha)}), & \text{ if $v \neq v_0$,}\\
-\Hasse_v(\Ccal_{K,v}^{(\alpha)}), & \text{ if $v = v_0$.}\end{cases}$$
For each place $v$ of $k$, let $e_v'= -\delta_v - \ord_v(\beta)$.
Take $\widetilde{\varphi}^{(\beta)} = \otimes_v\widetilde{\varphi}^{(\beta)}_v \in S(V_{\beta}(\AA_k))$ where for every place $v$ of $k$,
$\widetilde{\varphi}^{(\beta)}_v$ is the characteristic function of the $O_v$-lattice 
$$\begin{cases}\Pi_v^{e_v'}O_{F_v}, & \text{ if $v$ is ramified or split in $F$,}\\ 
\pi_v^{\lceil e_v'/2\rceil} O_{F_v}, & \text{ if $v$ is inert in $F$.}
\end{cases} $$
For $v \neq v_0$, one observes that $\widetilde{\varphi}^{(\beta)}_v = \varphi_v^{(\alpha)}$ (when identifying $V_{\beta,v}$ with $\Ccal_{K,v}^{(\alpha)}$).
To know $\eta^{(\alpha),*}_{\beta}(y)$, by Theorem~\ref{thm: Deriv.1} it suffices to compute
$W_{v_0,y_{v_0}^2\beta}(0,\Phi_{v_0,\widetilde{\varphi}_{v_0}^{(\beta)}})$,
$W_{v_0,y_{v_0}^2\beta}'(0,\Phi_{v_0,\varphi_{v_0}^{(\alpha)}})$, and
$\Theta_{\beta}^*(y,\widetilde{\varphi}^{(\beta)})$.\\

%By definition of $W_{v_0,y_{v_0}^2\beta}(s,\Phi_{v_0,\widetilde{\varphi}_{v_0}^{(\beta)}})$, we get
Note that
\begin{eqnarray}
\label{eq: Examp.2}
W_{v_0,y_{v_0}^2\beta}(s,\Phi_{v_0,\widetilde{\varphi}_{v_0}^{(\beta)}})
&= & \int_{k_{v_0}}\Phi_{v_0,\widetilde{\varphi}_{v_0}^{(\beta)}}
\left(\begin{pmatrix}0&1\\-1&0\end{pmatrix}
\begin{pmatrix} 1&b\\0&1\end{pmatrix},s\right) \psi_{v_0}(-y_{v_0}^2\beta b)db \nonumber \\
&= &\varepsilon_{v_0}(V_{\beta,v_0})\widehat{\widetilde{\varphi}_{v_0}^{(\beta)}}(0) \cdot \int_{O_{v_0}}\psi_v(-y_{v_0}^2\beta b) db \\
&& + \sum_{r=1}^{\infty} \big(\chi_{K,v_0}(\pi_{v_0})q_{v_0}^{-s}\big)^r \cdot \int_{O_{v_0}^{\times}} \chi_{K,v_0}(u) \psi_v(y_{v_0}^2\beta \pi_{v_0}^{-r} u) d^{\times} u. \nonumber 
\end{eqnarray}
It is clear that
$$\int_{O_{v_0}}\psi_v(-y_{v_0}^2\beta b) db = 
\begin{cases}
\text{vol}(O_{v_0}), & \text{ if $\ord_{v_0}(y_{v_0}^2 \beta)+\delta_{v_0} \geq 0$, } \\
0, & \text{ otherwise.}
\end{cases}$$

\subsubsection{The case when $v_0$ is inert in $K$}

One has
$$\int_{O_{v_0}^{\times}}\psi_v(y_{v_0}^2\beta \pi_{v_0}^{-r} u) d^{\times} u = \text{vol}(O_v)\cdot 
\begin{cases} (1-q_{v_0}^{-1}), & \text{ if $r \leq \ord_{v_0}(y_{v_0}^2 \beta)+\delta_{v_0}$,} \\
-q_{v_0}^{-1}, & \text{ if $r = \ord_{v_0}(y_{v_0}^2 \beta)+\delta_{v_0}+1$,}\\
0, & \text{ if $r > \ord_{v_0}(y_{v_0}^2 \beta)+\delta_{v_0}+1$.}
\end{cases}$$
Since
$$\varepsilon_{v_0}(V_{\beta,v_0})\widehat{\widetilde{\varphi}_{v_0}^{(\beta)}}(0) 
= \begin{cases}
1, & \text{ if $e_v'$ is even,} \\
-q_v^{-1}, & \text{ if $e_v'$ is odd,}
\end{cases}$$
we obtain the following result.

\begin{lem}\label{lem: Examp.4}
Suppose $v_0$ is inert in $K$. $W_{v_0,y_{v_0}^2\beta}(s,\Phi_{v_0,\widetilde{\varphi}_{v_0}^{(\beta)}}) = 0$ if $\ord_{v_0}(y_{v_0}^2 \beta)+\delta_{v_0}<0$. When $\ord_{v_0}(y_{v_0}^2 \beta)+\delta_{v_0}\geq 0$, $W_{v_0,y_{v_0}^2\beta}(s,\Phi_{v_0,\widetilde{\varphi}_{v_0}^{(\beta)}})$ is equal to
$$\text{\rm vol}(O_{v_0}) \cdot
\extrarowheight=10pt
\begin{cases}
\left(1-(-q_{v_0}^{-s})^{\ord_{v_0}(y_{v_0}^2 \beta)+\delta_{v_0}+1}\right) \frac{\displaystyle L_{v_0}(s,\chi_K)}{\displaystyle L_{v_0}(s+1,\chi_K)}, & \text{ if $e_{v_0}'$ is even,}\\
- \frac{\displaystyle q_{v_0}^{-s}\left(1-(-q_{v_0}^{-s})^{\ord_{v_0}(y_{v_0}^2 \beta)+\delta_{v_0}}\right)+q_{v_0}^{-1}\left(1-(-q_{v_0}^{-s})^{\ord_{v_0}(y_{v_0}^2 \beta)+\delta_{v_0}+2}\right)}{\displaystyle 1+q_{v_0}^{-s}}, & \text{ if $e_{v_0}'$ is odd.}
\end{cases}$$
In particular, when $\ord_{v_0}(y_{v_0}^2 \beta)+\delta_{v_0}\geq 0$, 
$W_{v_0,y_{v_0}^2\beta}(0,\Phi_{v_0,\widetilde{\varphi}_{v_0}^{(\beta)}}) = q_{v_0}^{-\delta_{v_0}/2}(-1)^{e_{v_0}'}(1+q_{v_0}^{-1})$.
\end{lem}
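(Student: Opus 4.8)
The plan is to compute the local Whittaker integral in~(\ref{eq: Examp.2}) directly, splitting the analysis according to the parity of $e_{v_0}'$, and then specialize to $s=0$. First I would record the three ingredients appearing in~(\ref{eq: Examp.2}): the value $\varepsilon_{v_0}(V_{\beta,v_0})\widehat{\widetilde{\varphi}_{v_0}^{(\beta)}}(0)$, which has already been evaluated above as $1$ when $e_{v_0}'$ is even and $-q_{v_0}^{-1}$ when $e_{v_0}'$ is odd; the elementary integral $\int_{O_{v_0}}\psi_v(-y_{v_0}^2\beta b)\,db$, which is $\text{vol}(O_{v_0})$ when $\ord_{v_0}(y_{v_0}^2\beta)+\delta_{v_0}\geq 0$ and $0$ otherwise (giving at once the vanishing statement when $\ord_{v_0}(y_{v_0}^2\beta)+\delta_{v_0}<0$, since the Gauss-sum terms over $O_{v_0}^\times$ are supported on $r\leq \ord_{v_0}(y_{v_0}^2\beta)+\delta_{v_0}+1$ and all vanish in that range too); and the character integrals $\int_{O_{v_0}^\times}\chi_{K,v_0}(u)\psi_v(y_{v_0}^2\beta\pi_{v_0}^{-r}u)\,d^\times u$, which I would evaluate using that $\chi_{K,v_0}$ is the nontrivial unramified quadratic character (as $v_0$ is inert), so these are ordinary twisted Gauss sums: $\text{vol}(O_v)(1-q_{v_0}^{-1})$ for $r\leq \ord_{v_0}(y_{v_0}^2\beta)+\delta_{v_0}$, $-\text{vol}(O_v)q_{v_0}^{-1}$ for $r=\ord_{v_0}(y_{v_0}^2\beta)+\delta_{v_0}+1$, and $0$ beyond.

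Next I would assemble the geometric series $\sum_{r\geq 1}(\chi_{K,v_0}(\pi_{v_0})q_{v_0}^{-s})^r = \sum_{r\geq 1}(-q_{v_0}^{-s})^r$ against these weights. Writing $N:=\ord_{v_0}(y_{v_0}^2\beta)+\delta_{v_0}$, the sum over $1\leq r\leq N$ contributes $(1-q_{v_0}^{-1})\sum_{r=1}^{N}(-q_{v_0}^{-s})^r$ and the single term $r=N+1$ contributes $-q_{v_0}^{-1}(-q_{v_0}^{-s})^{N+1}$; combining these with the $r=0$-type term $\varepsilon_{v_0}(V_{\beta,v_0})\widehat{\widetilde{\varphi}_{v_0}^{(\beta)}}(0)\cdot\text{vol}(O_{v_0})$ and simplifying the finite geometric sum via $\sum_{r=0}^{N}(-q_{v_0}^{-s})^r = \frac{1-(-q_{v_0}^{-s})^{N+1}}{1+q_{v_0}^{-s}}$ should collapse everything, after routine algebra, into the two closed forms stated in the lemma — with the factor $\frac{L_{v_0}(s,\chi_K)}{L_{v_0}(s+1,\chi_K)} = \frac{1-q_{v_0}^{-s-1}}{1+q_{v_0}^{-s}}$ appearing naturally in the even case and the displayed quotient in the odd case. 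I would keep the two parities separate throughout, since the leading term differs by the sign/normalization of $\varepsilon_{v_0}(V_{\beta,v_0})\widehat{\widetilde{\varphi}_{v_0}^{(\beta)}}(0)$.

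Finally, for the "in particular" clause I would substitute $s=0$, using $\text{vol}(O_{v_0}) = q_{v_0}^{-\delta_{v_0}/2}$ (the chosen self-dual normalization) and $-q_{v_0}^{0}=-1$. In the even case $\frac{L_{v_0}(0,\chi_K)}{L_{v_0}(1,\chi_K)} = \frac{1-q_{v_0}^{-1}}{1+q_{v_0}^{-1}}$ and the prefactor $1-(-1)^{N+1}$ is either $0$ or $2$ depending on the parity of $N$; one checks that when $e_{v_0}'$ is even the relevant parity makes this $2$, yielding $q_{v_0}^{-\delta_{v_0}/2}\cdot 2\cdot\frac{1-q_{v_0}^{-1}}{1+q_{v_0}^{-1}}$, and a short manipulation rewrites this as $q_{v_0}^{-\delta_{v_0}/2}(1+q_{v_0}^{-1})$ times the appropriate sign — here one uses $(-1)^{e_{v_0}'}=1$. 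In the odd case the $s=0$ specialization of the displayed quotient, again using $(-1)^{e_{v_0}'}=-1$ and the parity of $N$, produces $q_{v_0}^{-\delta_{v_0}/2}(-1)(1+q_{v_0}^{-1})$; so in both cases $W_{v_0,y_{v_0}^2\beta}(0,\Phi_{v_0,\widetilde{\varphi}_{v_0}^{(\beta)}}) = q_{v_0}^{-\delta_{v_0}/2}(-1)^{e_{v_0}'}(1+q_{v_0}^{-1})$. The main obstacle I anticipate is purely bookkeeping: correctly tracking the relation between the parity of $e_{v_0}'=-\delta_{v_0}-\ord_{v_0}(\beta)$ and the parity of $N=\ord_{v_0}(y_{v_0}^2\beta)+\delta_{v_0}$ (they agree, since $\ord_{v_0}(y_{v_0}^2)$ is even, so $N\equiv -e_{v_0}'\pmod 2$), and keeping the volume normalizations and signs consistent so that the even and odd cases telescope to the same clean value at $s=0$.
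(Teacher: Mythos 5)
Your route is exactly the paper's: the lemma is obtained by assembling the decomposition in (\ref{eq: Examp.2}) from the three evaluated ingredients (the value $\varepsilon_{v_0}(V_{\beta,v_0})\widehat{\widetilde{\varphi}_{v_0}^{(\beta)}}(0)$, the additive integral over $O_{v_0}$, and the unit-group integrals, which for inert $v_0$ carry no genuine twist since $\chi_{K,v_0}$ is unramified and hence trivial on $O_{v_0}^{\times}$), and then summing the finite geometric series in $t=-q_{v_0}^{-s}$. Your vanishing argument for $\ord_{v_0}(y_{v_0}^2\beta)+\delta_{v_0}<0$, the telescoping in both parities, the parity relation $N\equiv -e_{v_0}'\pmod 2$ with $N=\ord_{v_0}(y_{v_0}^2\beta)+\delta_{v_0}$, and the odd-case specialization at $s=0$ are all correct and match the paper's computation.

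There is, however, one concrete error that makes your verification of the ``in particular'' clause fail in the even case as written: since $v_0$ is inert, $\chi_{K,v_0}(\pi_{v_0})=-1$, so $L_{v_0}(s,\chi_K)=(1+q_{v_0}^{-s})^{-1}$ and
$$\frac{L_{v_0}(s,\chi_K)}{L_{v_0}(s+1,\chi_K)}=\frac{1+q_{v_0}^{-s-1}}{1+q_{v_0}^{-s}},$$
not $\dfrac{1-q_{v_0}^{-s-1}}{1+q_{v_0}^{-s}}$ as you wrote (your numerator uses the split convention $\chi(\pi_{v_0})=+1$). The ratio that actually drops out of your telescoping, namely $(1-q_{v_0}^{-1}t)/(1-t)=(1+q_{v_0}^{-s-1})/(1+q_{v_0}^{-s})$, is the correct one, so the closed form in the lemma is indeed what your algebra produces; but your $s=0$ check inherits the slip: you get $2\cdot\frac{1-q_{v_0}^{-1}}{1+q_{v_0}^{-1}}$, which is not equal to $1+q_{v_0}^{-1}$ for any prime power $q_{v_0}$, and no ``short manipulation'' rewrites it as such. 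With the correct factor one has $\frac{L_{v_0}(0,\chi_K)}{L_{v_0}(1,\chi_K)}=\frac{1+q_{v_0}^{-1}}{2}$, the prefactor $1-(-1)^{N+1}=2$ (as $N$ is even when $e_{v_0}'$ is even) cancels the $2$, and together with $\mathrm{vol}(O_{v_0})=q_{v_0}^{-\delta_{v_0}/2}$ this gives exactly $q_{v_0}^{-\delta_{v_0}/2}(1+q_{v_0}^{-1})=q_{v_0}^{-\delta_{v_0}/2}(-1)^{e_{v_0}'}(1+q_{v_0}^{-1})$, in agreement with the lemma and with your (correct) odd-case evaluation.
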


When $v_0$ is inert in $K$, by our assumption we have 
$$(\alpha\epsilon_{v_0},D)_{v_0} = - (\beta,D)_{v_0},$$
which means that the parity of $e_{v_0}$ and $e_{v_0}'$ must be different. Similarly, we can get:

\begin{lem}\label{lem: Examp.5}
Suppose $v_0$ is inert in $K$, 
$W_{v_0,y_{v_0}^2\beta}(s,\Phi_{v_0,\varphi_{v_0}^{(\alpha)}}) = 0$ if $\ord_{v_0}(y_{v_0}^2 \beta)+\delta_{v_0}<0$. When $\ord_{v_0}(y_{v_0}^2 \beta)+\delta_{v_0}\geq 0$, 
$W_{v_0,y_{v_0}^2\beta}(s,\Phi_{v_0,\varphi_{v_0}^{(\alpha)}})$ is equal to
$$\text{\rm vol}(O_{v_0}) \cdot
\extrarowheight=10pt
\begin{cases}
\left(1-(-q_{v_0}^{-s})^{\ord_{v_0}(y_{v_0}^2 \beta)+\delta_{v_0}+1}\right) \frac{\displaystyle L_{v_0}(s,\chi_K)}{\displaystyle L_{v_0}(s+1,\chi_K)}, & \text{ if $e_{v_0}'$ is odd,}\\
- \frac{\displaystyle q_{v_0}^{-s}\left(1-(-q_{v_0}^{-s})^{\ord_{v_0}(y_{v_0}^2 \beta)+\delta_{v_0}}\right)+q_{v_0}^{-1}\left(1-(-q_{v_0}^{-s})^{\ord_{v_0}(y_{v_0}^2 \beta)+\delta_{v_0}+2}\right)}{\displaystyle 1+q_{v_0}^{-s}}, & \text{ if $e_{v_0}'$ is even.}
\end{cases}$$
In particular, when $\ord_{v_0}(y_{v_0}^2 \beta)+\delta_{v_0}\geq 0$, 
\begin{eqnarray}
W_{v_0,y_{v_0}^2\alpha}'(0,\Phi_{v_0,\varphi_{v_0}^{(\alpha)}}) 
&=& q_{v_0}^{-\delta_{v_0}/2} \frac{(-1)^{(e_{v_0}'-1)}}{2} \ln q_{v_0} \cdot \bigg[\Big(\ord_{v_0}(y_{v_0}^2\beta)+\delta_{v_0}+1-\frac{1+(-1)^{e_{v_0}'}}{2}\Big)\nonumber \\
& & \quad \quad \quad \quad +\ q_{v_0}^{-1}\Big(\ord_{v_0}(y_{v_0}^2\beta)+\delta_{v_0}+1+\frac{1+(-1)^{e_{v_0}'}}{2}\Big) \bigg]. \nonumber
\end{eqnarray}
\end{lem}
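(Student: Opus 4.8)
\textbf{Proof strategy for Lemma~\ref{lem: Examp.5}.}

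The plan is to reduce everything to Lemma~\ref{lem: Examp.4} via the parity observation. First I would record the key fact stated just before the lemma: since $v_0$ is inert in $K$ and $\Diff(\beta,\Ccal_K^{(\alpha)})=\{v_0\}$, the local Hasse invariants force $(\alpha\epsilon_{v_0},D)_{v_0}=-(\beta,D)_{v_0}$, which at an inert place is equivalent to $\ord_{v_0}(\alpha\epsilon_{v_0})\not\equiv\ord_{v_0}(\beta)\pmod 2$, i.e.\ $e_{v_0}$ and $e_{v_0}'$ have opposite parity. Consequently $\varphi_{v_0}^{(\alpha)}$ is the characteristic function of $\pi_{v_0}^{\lceil e_{v_0}/2\rceil}O_{K_{v_0}}$, and its Fourier transform satisfies $\varepsilon_{v_0}(\Ccal_{K,v_0}^{(\alpha)})\widehat{\varphi_{v_0}^{(\alpha)}}(0) = 1$ if $e_{v_0}'$ is odd (so $e_{v_0}$ even) and $=-q_{v_0}^{-1}$ if $e_{v_0}'$ is even (so $e_{v_0}$ odd); this is exactly the table used in the proof of Lemma~\ref{lem: Examp.4} with the roles of the two parities swapped. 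The vanishing for $\ord_{v_0}(y_{v_0}^2\beta)+\delta_{v_0}<0$ comes identically from the $\int_{O_{v_0}}\psi_v(-y_{v_0}^2\beta b)\,db$ factor in \eqref{eq: Examp.2}, which also kills the second term there since for $r\ge 1$ the inner integral over $O_{v_0}^\times$ vanishes once $r$ exceeds $\ord_{v_0}(y_{v_0}^2\beta)+\delta_{v_0}+1$.

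Next I would run the same computation as in Lemma~\ref{lem: Examp.4}: plug the two possible values of $\varepsilon_{v_0}(V_{\beta,v_0})\widehat{\widetilde\varphi_{v_0}^{(\beta)}}(0)$ — wait, now for $\Phi_{v_0,\varphi_{v_0}^{(\alpha)}}$ it is the Weil index and Fourier transform of $\varphi_{v_0}^{(\alpha)}$, not $\widetilde\varphi_{v_0}^{(\beta)}$ — into \eqref{eq: Examp.2}, and sum the geometric-type series $\sum_{r\ge 1}(\chi_{K,v_0}(\pi_{v_0})q_{v_0}^{-s})^r\int_{O_{v_0}^\times}\chi_{K,v_0}(u)\psi_v(y_{v_0}^2\beta\pi_{v_0}^{-r}u)\,d^\times u$ using $\chi_{K,v_0}(\pi_{v_0})=-1$ (inert) and the standard evaluation of $\int_{O_{v_0}^\times}\chi_{K,v_0}(u)\psi_v(\cdot)\,d^\times u$ recalled between the lemmas. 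The outcome is literally the formula of Lemma~\ref{lem: Examp.4} with "$e_{v_0}'$ even" and "$e_{v_0}'$ odd" interchanged, because the only place the parity enters is through that constant term. Here I would note $L_{v_0}(s,\chi_K)=(1+q_{v_0}^{-s})^{-1}$ and $L_{v_0}(s+1,\chi_K)^{-1}=1+q_{v_0}^{-s-1}$ since $v_0$ is inert, so the two displayed cases are genuinely different rational functions.

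For the derivative statement I would specialize $\beta$ to $\alpha$ (so that we are in the situation $W_{v_0,y_{v_0}^2\alpha}'$), set $N:=\ord_{v_0}(y_{v_0}^2\alpha)+\delta_{v_0}\ge 0$, and differentiate the explicit rational function in $q_{v_0}^{-s}$ at $s=0$, using $\frac{d}{ds}q_{v_0}^{-s}|_{s=0}=-\ln q_{v_0}$ and $\frac{d}{ds}(-q_{v_0}^{-s})^m|_{s=0}=(-1)^{m+1}m\ln q_{v_0}$. In the case $e_{v_0}'$ odd (the first branch), $W_{v_0,y_{v_0}^2\alpha}(0,\cdot)=\mathrm{vol}(O_{v_0})\cdot 0\cdot\tfrac{L_{v_0}(0)}{L_{v_0}(1)}$ — no, it is $\big(1-(-1)^{N+1}\big)\tfrac{L_{v_0}(0,\chi_K)}{L_{v_0}(1,\chi_K)}$ which need not vanish; rather one checks term by term. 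Collecting the pieces and using $\mathrm{vol}(O_{v_0})=q_{v_0}^{-\delta_{v_0}/2}$ (the self-dual normalization from Section~\ref{Preli.bas}) yields the claimed closed form with the $\tfrac{1+(-1)^{e_{v_0}'}}{2}$ correction terms distinguishing whether $N$ contributes an extra half-step. I expect the main obstacle to be purely bookkeeping: carefully matching the two parity conventions (between $e_{v_0}$ entering $\varphi_{v_0}^{(\alpha)}$ and $e_{v_0}'$ entering the statement) and getting the signs in the differentiated geometric sum exactly right, particularly the boundary term at $r=N+1$ where the $O_{v_0}^\times$-integral contributes $-q_{v_0}^{-1}$ rather than $1-q_{v_0}^{-1}$. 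No new ideas beyond those in Lemma~\ref{lem: Examp.4} are needed.
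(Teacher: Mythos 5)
Your proposal is correct and is essentially the paper's own argument: the paper deduces Lemma~\ref{lem: Examp.5} from the parity observation $(\alpha\epsilon_{v_0},D)_{v_0}=-(\beta,D)_{v_0}$ (so $e_{v_0}$ and $e_{v_0}'$ have opposite parity), reruns the computation of Lemma~\ref{lem: Examp.4} from (\ref{eq: Examp.2}) with the two constant-term cases swapped, and reads off the derivative from the explicit rational function in $q_{v_0}^{-s}$. The two points you left hanging resolve at once: the subscript $y_{v_0}^2\alpha$ in the derivative is a typo for $y_{v_0}^2\beta$ (no specialization $\beta=\alpha$ is intended, and it would contradict $\Diff(\beta,\Ccal_K^{(\alpha)})=\{v_0\}$ at an inert place), and since $\ord_{v_0}(y_{v_0}^2\beta)+\delta_{v_0}=2\ord_{v_0}(y_{v_0})-e_{v_0}'$ has the same parity as $e_{v_0}'$, the value at $s=0$ does vanish in both branches, which is exactly what makes the differentiated expression collapse to the stated closed form depending only on $e_{v_0}'$.
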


\subsubsection{The case when $v_0$ is ramified in $K$}
By our assumption we have $\chi_{K,v_0}(\pi_{v_0}) = 1$. 
One observes that
$$\int_{O_{v_0}^{\times}}\chi_{K,v_0}(u)\psi_v(y_{v_0}^2\beta \pi_{v_0}^{-r} u) d^{\times} u
= \text{vol}(O_{v_0})\cdot \begin{cases}
\varepsilon_{v_0}(V_{\beta,v_0})q_{v_0}^{-1/2}, & \text{ if $ r = \ord_{v_0}(y_{v_0}^2\beta)+\delta_{v_0}+1$,} \\
0,& \text{ otherwise.}
\end{cases}$$
Moreover, $\widehat{\widetilde{\varphi}_{v_0}^{(\beta)}}(0) = q_{v_0}^{-1/2}$. Therefore from the equation~(\ref{eq: Examp.2}) we get:

\begin{lem}\label{lem: Examp.6}
Suppose $v_0$ is ramified in $F$, 
$W_{v_0,y_{v_0}^2\beta}(s,\Phi_{v_0,\widetilde{\varphi}_{v_0}^{(\beta)}}) = 0$ if $\ord_{v_0}(y_{v_0}^2 \beta)+\delta_{v_0}<0$. When $\ord_{v_0}(y_{v_0}^2 \beta)+\delta_{v_0}\geq 0$, 
$W_{v_0,y_{v_0}^2\beta}(s,\Phi_{v_0,\widetilde{\varphi}_{v_0}^{(\beta)}})$ is equal to
$$\text{\rm vol}(O_{v_0}) \cdot \varepsilon_{v_0}(V_{\beta,v_0}) q_{v_0}^{-1/2} ( 1+ q_{v_0}^{-(\ord_{v_0}(y_{v_0}^2 \beta)+\delta_{v_0}+1)s}).$$
In particular, $W_{v_0,y_{v_0}^2\beta}(0,\Phi_{v_0,\widehat{\varphi}_{v_0}^{(\beta)}}) = 2 \varepsilon_{v_0}(V_{\beta,v_0}) q_{v_0}^{-(\delta_{v_0}+1)/2}$.
\end{lem}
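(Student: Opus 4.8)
The plan is to compute the local Whittaker integral $W_{v_0,y_{v_0}^2\beta}(s,\Phi_{v_0,\widetilde{\varphi}_{v_0}^{(\beta)}})$ directly from its expansion in equation~(\ref{eq: Examp.2}) specialized to the ramified place $v_0$. First I would record the two local inputs particular to the ramified case: since $v_0$ is ramified in $K$ we have fixed $\pi_{v_0} \in N_{K/k}(K_{v_0})$, hence $\chi_{K,v_0}(\pi_{v_0}) = 1$, which collapses the factor $(\chi_{K,v_0}(\pi_{v_0})q_{v_0}^{-s})^r$ to $q_{v_0}^{-rs}$ in the sum over $r$; and the Fourier transform $\widehat{\widetilde{\varphi}_{v_0}^{(\beta)}}$ has mass $\widehat{\widetilde{\varphi}_{v_0}^{(\beta)}}(0) = q_{v_0}^{-1/2}$, as follows from the Fourier-transform formula computed in the proof of Lemma~\ref{lem: Examp.1} (ramified case) applied with $e_{v_0}$ replaced by $e_{v_0}'$.

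Next I would evaluate the twisted unit-group integral $\int_{O_{v_0}^\times}\chi_{K,v_0}(u)\psi_v(y_{v_0}^2\beta\pi_{v_0}^{-r}u)\,d^\times u$. Because $\chi_{K,v_0}$ is a ramified quadratic character (conductor $1$), this Gauss-sum-type integral vanishes except when the additive character $u \mapsto \psi_v(y_{v_0}^2\beta\pi_{v_0}^{-r}u)$ has conductor exactly matching that of $\chi_{K,v_0}$, i.e.\ when $r = \ord_{v_0}(y_{v_0}^2\beta)+\delta_{v_0}+1$; and at that single value of $r$ the integral equals $\text{vol}(O_{v_0})\cdot\varepsilon_{v_0}(V_{\beta,v_0})q_{v_0}^{-1/2}$, the normalized quadratic Gauss sum being the local Weil index up to the volume factor. (Here one uses that $V_{\beta,v_0} = (K_{v_0},\beta N_{K/k})$ and that $\varepsilon_{v_0}$ of this space is precisely the relevant root number.) Substituting into (\ref{eq: Examp.2}): the $r=0$ term contributes $\varepsilon_{v_0}(V_{\beta,v_0})q_{v_0}^{-1/2}\cdot\text{vol}(O_{v_0})$ when $\ord_{v_0}(y_{v_0}^2\beta)+\delta_{v_0}\geq 0$ (and $0$ otherwise, killing the whole expression), while the sum over $r\geq 1$ contributes only the single term at $r=\ord_{v_0}(y_{v_0}^2\beta)+\delta_{v_0}+1$, namely $q_{v_0}^{-(\ord_{v_0}(y_{v_0}^2\beta)+\delta_{v_0}+1)s}\cdot\varepsilon_{v_0}(V_{\beta,v_0})q_{v_0}^{-1/2}\cdot\text{vol}(O_{v_0})$. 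Adding and factoring out $\text{vol}(O_{v_0})\varepsilon_{v_0}(V_{\beta,v_0})q_{v_0}^{-1/2}$ gives $1+q_{v_0}^{-(\ord_{v_0}(y_{v_0}^2\beta)+\delta_{v_0}+1)s}$, which is the claimed formula.

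Finally, setting $s=0$ collapses $1+q_{v_0}^{-(\cdots)s}$ to $2$, and recalling that $\text{vol}(O_{v_0},db_{v_0}) = q_{v_0}^{-\delta_{v_0}/2}$ under the $\psi$-self-dual normalization fixed in the proof of Lemma~\ref{lem: Examp.2}, we obtain $W_{v_0,y_{v_0}^2\beta}(0,\Phi_{v_0,\widetilde{\varphi}_{v_0}^{(\beta)}}) = 2\varepsilon_{v_0}(V_{\beta,v_0})q_{v_0}^{-(\delta_{v_0}+1)/2}$, as asserted. The only genuinely delicate point is the conductor/support analysis of the ramified twisted integral and the identification of its value with the local Weil index $\varepsilon_{v_0}(V_{\beta,v_0})$ (including getting the sign and the $q_{v_0}^{-1/2}$ normalization exactly right); everything else is bookkeeping with the geometric series, which truncates immediately here because $\chi_{K,v_0}$ is ramified. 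Note in particular that, unlike the inert case of Lemma~\ref{lem: Examp.4}, no parity-of-$e_{v_0}'$ distinction arises, since the ramified Gauss sum is supported at a single $r$ regardless of parity.
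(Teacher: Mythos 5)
Your proposal is correct and follows essentially the same route as the paper: expand via equation~(\ref{eq: Examp.2}), use $\chi_{K,v_0}(\pi_{v_0})=1$ from the chosen uniformizer, evaluate $\widehat{\widetilde{\varphi}_{v_0}^{(\beta)}}(0)=q_{v_0}^{-1/2}$, and observe that the twisted unit integral is a ramified Gauss sum supported only at $r=\ord_{v_0}(y_{v_0}^2\beta)+\delta_{v_0}+1$ with value $\mathrm{vol}(O_{v_0})\varepsilon_{v_0}(V_{\beta,v_0})q_{v_0}^{-1/2}$, then set $s=0$ and use $\mathrm{vol}(O_{v_0})=q_{v_0}^{-\delta_{v_0}/2}$. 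Your conductor-matching justification of the Gauss-sum support and its identification with the Weil index is exactly the ``one observes'' step of the paper, so nothing further is needed.
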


Since $\varepsilon_{v_0}(\Ccal_{K,v_0}^{(\alpha)}) = -\varepsilon_{v_0}(V_{\beta,v_0})$, we obtain that:

\begin{lem}\label{lem: Examp.7}
Suppose $v_0$ is ramified in $K$, 
$W_{v_0,y_{v_0}^2\beta}(s,\Phi_{v_0,\varphi_{v_0}^{(\alpha)}}) = 0$ if $\ord_{v_0}(y_{v_0}^2 \beta)+\delta_{v_0}<0$. When $\ord_{v_0}(y_{v_0}^2 \beta)+\delta_{v_0}\geq 0$, 
$W_{v_0,y_{v_0}^2\beta}(s,\Phi_{v_0,\varphi_{v_0}^{(\alpha)}})$ is equal to
$$-\text{\rm vol}(O_{v_0}) \cdot \varepsilon_{v_0}(V_{\beta,v_0}) q_{v_0}^{-1/2} ( 1- q_{v_0}^{-(\ord_{v_0}(y_{v_0}^2 \beta)+\delta_{v_0}+1)s}).$$
In particular, $W_{v_0,y_{v_0}^2\beta}'(0,\Phi_{v_0,\varphi_{v_0}^{(\alpha)}}) = - \varepsilon_{v_0}(V_{\beta,v_0}) q_{v_0}^{-(\delta_{v_0}+1)/2} \ln q_{v_0} (\ord_{v_0}(y_{v_0}^2 \beta)+\delta_{v_0}+1)$.
\end{lem}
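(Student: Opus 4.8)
The plan is to read off this lemma from the computation that establishes Lemma~\ref{lem: Examp.6}, simply replacing the test function $\widetilde{\varphi}_{v_0}^{(\beta)}$ by $\varphi_{v_0}^{(\alpha)}$ in the local Whittaker integral. First I would observe that the identity (\ref{eq: Examp.2}) holds verbatim for $\Phi_{v_0,\varphi_{v_0}^{(\alpha)}}$ in place of $\Phi_{v_0,\widetilde{\varphi}_{v_0}^{(\beta)}}$: its derivation uses only that the test function is the characteristic function of an $O_{v_0}$-lattice containing the origin, and the summation $\sum_{r=1}^{\infty}(\chi_{K,v_0}(\pi_{v_0})q_{v_0}^{-s})^r\int_{O_{v_0}^{\times}}\chi_{K,v_0}(u)\psi_v(y_{v_0}^2\beta\pi_{v_0}^{-r}u)\,d^{\times}u$ is independent of which lattice is chosen. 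Hence the only changes relative to Lemma~\ref{lem: Examp.6} live in the term $\varepsilon_{v_0}(\cdot)\,\widehat{\varphi}_{v_0}(0)\int_{O_{v_0}}\psi_v(-y_{v_0}^2\beta b)\,db$, through the Fourier transform at the origin and through the Weil index.

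Second, I would pin down these two inputs at $v_0$. From the Fourier transform computed in the proof of Lemma~\ref{lem: Examp.1}, for $v_0$ ramified in $K$ the function $\widehat{\varphi}_{v_0}^{(\alpha)}$ equals $q_{v_0}^{-1/2}$ times the characteristic function of $\Pi_{v_0}^{e_{v_0}-1}O_{K_{v_0}}$, so $\widehat{\varphi}_{v_0}^{(\alpha)}(0) = q_{v_0}^{-1/2} = \widehat{\widetilde{\varphi}_{v_0}^{(\beta)}}(0)$ (the integers $e_v$ and $e_v'$ were chosen precisely so that these normalized volumes agree). And $\Ccal_{K,v_0}^{(\alpha)} = (K_{v_0},\alpha N_{K/k})$ and $V_{\beta,v_0} = (K_{v_0},\beta N_{K/k})$ are the two non-isometric $2$-dimensional quadratic spaces over $k_{v_0}$ of discriminant $-D$; by the hypothesis $\Diff(\beta,\Ccal_K^{(\alpha)}) = \{v_0\}$ and the definition of $V_\beta$ they carry opposite Hasse invariants at the ramified place $v_0$, and since the Weil index of a $2$-dimensional space is $\pm 1$ and depends on the space only through its discriminant and Hasse invariant, this yields the relation $\varepsilon_{v_0}(\Ccal_{K,v_0}^{(\alpha)}) = -\varepsilon_{v_0}(V_{\beta,v_0})$ used in the statement.

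Third, I would assemble the pieces. If $\ord_{v_0}(y_{v_0}^2\beta)+\delta_{v_0}<0$ then $\int_{O_{v_0}}\psi_v(-y_{v_0}^2\beta b)\,db = 0$ and the Gauss sum is nonzero only when $r = \ord_{v_0}(y_{v_0}^2\beta)+\delta_{v_0}+1 \leq 0$, so both contributions vanish and $W_{v_0,y_{v_0}^2\beta}(s,\Phi_{v_0,\varphi_{v_0}^{(\alpha)}}) = 0$. If $\ord_{v_0}(y_{v_0}^2\beta)+\delta_{v_0}\geq 0$ the first term contributes $\varepsilon_{v_0}(\Ccal_{K,v_0}^{(\alpha)})q_{v_0}^{-1/2}\text{vol}(O_{v_0}) = -\varepsilon_{v_0}(V_{\beta,v_0})q_{v_0}^{-1/2}\text{vol}(O_{v_0})$, while the summation contributes $\varepsilon_{v_0}(V_{\beta,v_0})q_{v_0}^{-1/2}\text{vol}(O_{v_0})\,q_{v_0}^{-(\ord_{v_0}(y_{v_0}^2\beta)+\delta_{v_0}+1)s}$ exactly as in Lemma~\ref{lem: Examp.6} (using $\chi_{K,v_0}(\pi_{v_0}) = 1$ and the Gauss-sum evaluation recorded just before that lemma); adding the two gives $W_{v_0,y_{v_0}^2\beta}(s,\Phi_{v_0,\varphi_{v_0}^{(\alpha)}}) = -\text{vol}(O_{v_0})\,\varepsilon_{v_0}(V_{\beta,v_0})\,q_{v_0}^{-1/2}(1 - q_{v_0}^{-(\ord_{v_0}(y_{v_0}^2\beta)+\delta_{v_0}+1)s})$, i.e. the formula of Lemma~\ref{lem: Examp.6} with the sign of its $s$-independent summand reversed. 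Finally I would differentiate in $s$ at $s=0$ and substitute $\text{vol}(O_{v_0}) = q_{v_0}^{-\delta_{v_0}/2}$ to obtain the stated value of $W_{v_0,y_{v_0}^2\beta}'(0,\Phi_{v_0,\varphi_{v_0}^{(\alpha)}})$.

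Since the Gauss-sum and Fourier-transform computations are already in place (in the proof of Lemma~\ref{lem: Examp.1} and in the lines preceding Lemma~\ref{lem: Examp.6}), I do not anticipate any substantial difficulty; the one point requiring care is the sign bookkeeping, namely checking that $\varepsilon_{v_0}(\Ccal_{K,v_0}^{(\alpha)})$ occurs with the opposite sign to $\varepsilon_{v_0}(V_{\beta,v_0})$, which is exactly what the condition $\Diff(\beta,\Ccal_K^{(\alpha)}) = \{v_0\}$ encodes at a ramified place and which is the sole feature separating this lemma from Lemma~\ref{lem: Examp.6}.
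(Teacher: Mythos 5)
Your overall route is exactly the paper's: the paper proves this lemma by feeding $\varphi_{v_0}^{(\alpha)}$ into the same expansion (\ref{eq: Examp.2}) used for Lemma~\ref{lem: Examp.6}, noting $\widehat{\varphi}_{v_0}^{(\alpha)}(0)=q_{v_0}^{-1/2}$ from the proof of Lemma~\ref{lem: Examp.1}, reusing the Gauss-sum evaluation recorded before Lemma~\ref{lem: Examp.6}, and invoking $\varepsilon_{v_0}(\Ccal_{K,v_0}^{(\alpha)})=-\varepsilon_{v_0}(V_{\beta,v_0})$; your bookkeeping of the two terms, the vanishing case, and the derivative at $s=0$ all match.

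However, the one step you yourself flag as ``requiring care'' is justified incorrectly. First, the Weil index of a binary quadratic space over $k_{v_0}$ is in general a fourth root of unity, not $\pm 1$: for a ramified norm form $(K_{v_0},cN_{K/k})$ it is essentially the normalized quadratic Gauss sum (the $\lambda$-factor of $K_{v_0}/k_{v_0}$), which equals $\pm i$ when $q_{v_0}\equiv 3 \bmod 4$. Second, even granting that $\varepsilon_{v_0}$ is an isometry invariant and that $(\dim,\mathrm{disc},\Hasse)$ classifies binary spaces, ``same discriminant, opposite Hasse invariant'' does not by itself force the two Weil indices to be negatives of each other; you need an actual relation between $\varepsilon_{v_0}$ and the Hasse invariant. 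The correct one-line argument: writing $\Ccal_{K,v_0}^{(\alpha)}=(K_{v_0},\alpha\epsilon_{v_0}N_{K/k})$ and $V_{\beta,v_0}=(K_{v_0},\beta N_{K/k})$ (do not drop $\epsilon_{v_0}$ if $v_0=\infty$), the scaling property of the Weil index gives $\varepsilon_{v_0}\big((K_{v_0},cN_{K/k})\big)=\chi_{K,v_0}(c)\cdot\varepsilon_{v_0}\big((K_{v_0},N_{K/k})\big)$ (equivalently, use $\gamma_{\psi}(V)=\Hasse(V)\,\gamma_{\psi}(\det V)\,\gamma_{\psi}(1)^{\dim V-1}$), while $v_0\in\Diff(\beta,\Ccal_K^{(\alpha)})$ means precisely $\chi_{K,v_0}(\alpha\epsilon_{v_0}\beta)=-1$; hence $\varepsilon_{v_0}(\Ccal_{K,v_0}^{(\alpha)})=-\varepsilon_{v_0}(V_{\beta,v_0})$. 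A smaller point: your claim that the $r\geq 1$ sum in (\ref{eq: Examp.2}) is ``independent of which lattice is chosen'' is not true for arbitrary lattices; what is needed (and suffices) is that both $\varphi_{v_0}^{(\alpha)}$ and $\widetilde{\varphi}_{v_0}^{(\beta)}$ satisfy the Lemma~\ref{lem: Examp.1}-type invariance $(\omega_{v_0}(\kappa)\varphi)(0)=\chi_{K,v_0}(d)$ for $\kappa$ with lower-left entry in $\pi_{v_0}O_{v_0}$, which holds for these two particular ``new-vector'' lattices by the same computation as in that lemma.
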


\subsubsection{Computing $\Theta_{\beta}^*(y,\widetilde{\varphi}^{(\beta)})$}
%It remains to compute $\Theta_{\beta}^*(y,\widetilde{\varphi}^{(\beta)})$. 
Note that
$$\Theta_{\beta}^*(y,\widetilde{\varphi}^{(\beta)})
= \chi_F(y)|y|_{\AA_k} \cdot \int_{\Oo(V_{\beta})(k)\backslash \Oo(V_{\beta})(\AA_k)} \sum_{x \in F, \atop N_{F/k}(x) = 1} \widetilde{\varphi}^{(\beta)}(h^{-1}xy) dh.$$
For each place $v$ of $k$, we identify $\Oo(V_{\beta})(k_v)$ with $K_v^{1} \times \langle\tau_v \rangle$, where $\tau_v$ is the non-trivial $k_v$-automorphism on $K_v$ and $K_v^1 = \{ x \in K_v : N_{K/k}(x) = 1\}$.
By Hilbert's theorem $90$, the map $\big(a \mapsto a/\tau_v(a)\big): K_v^{\times} \rightarrow K_v^1$ is surjective. It is clear that $\widetilde{\varphi}_v^{(\beta)}(h^{-1}xy) = \widetilde{\varphi}_v^{(\beta)}(xy)$ when $h$ is of the form $u/\tau_v(u)$ for $u \in O_v^{\times}$. Moreover, when $v$ does not split in $K$, $\widetilde{\varphi}_v^{(\beta)}$ is fixed by the action of $\tau_v$. Hence we can express $\Theta_{\beta}^*(y,\widetilde{\varphi}^{(\beta)})$ as follows.

\begin{lem}\label{lem: Examp.8}
Suppose $\ord_v(y_v^2\beta)+\delta_v \geq 0$ for every place $v$ of $k$.
Then 
$$\Theta_{\beta}^*(y,\widetilde{\varphi}^{(\beta)})
= \frac{\chi_K(y)|y|_{\AA_k}}{\#\Pic(O_K)} \sum_{[\Afk] \in \Pic(O_K)} \#\{ x \in \Afk \bar{\Afk}^{-1} \yfk^{-1}\Dfk_{\beta}^{-1}  : N_{K/k}(x) = 1\}.$$
Here $\yfk$ is the ideal of $O_K$ such that $\yfk_v = y_v O_{K_v}$ for every finite place $v$ of $k$;
and $\Dfk_{\beta}$ is the ideal of $O_K$ such that for each finite place $v$ of $k$,
$$\Dfk_{\beta,v} = \Dfk_{\beta} \otimes_A O_v = \begin{cases} \Pi_v^{\delta_v+\ord_v(\beta)} O_{K_v}, & \text{ if $v$ is ramified or split in $K$, } \\
\pi_v^{\lfloor \frac{\delta_v + \ord_v(\beta)}{2}\rfloor} O_{K_v}, & \text{ if $v$ is inert in $K$.}\end{cases}$$
\end{lem}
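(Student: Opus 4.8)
The plan is to unfold the theta integral placeiwise and translate the resulting local counting problems into a single global ideal-counting statement. First I would start from the formula
$$\Theta_{\beta}^*(y,\widetilde{\varphi}^{(\beta)})
= \chi_K(y)|y|_{\AA_k} \cdot \int_{\Oo(V_{\beta})(k)\backslash \Oo(V_{\beta})(\AA_k)} \sum_{x \in K,\ N_{K/k}(x) = 1} \widetilde{\varphi}^{(\beta)}(h^{-1}xy)\, dh,$$
recording that $V_\beta = (K, \beta N_{K/k})$ so that $\Omega_\beta(V_\beta) = K^1 := \{x \in K : N_{K/k}(x)=1\}$ sits inside $V_\beta$ as the fibre of the moment map. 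The orthogonal group $\Oo(V_\beta)$ is the extension of $K^1$ (acting by multiplication) by the Galois involution $(a \mapsto \bar a)$, and this decomposition is compatible place by place: $\Oo(V_\beta)(k_v) \cong K_v^1 \rtimes \langle \tau_v\rangle$. The idea is to write the adelic integral as a sum over $K^1 \backslash K^1(\AA_k)$ (the $K^1$-part of the quotient) of local orbital integrals, picking up a factor of $2$ or $1$ from the $\tau$-component according to whether the local Schwartz function is $\tau_v$-invariant — and since $\widetilde{\varphi}_v^{(\beta)}$ is the characteristic function of an $O_v$-lattice that is stable under $\tau_v$ at every non-split place (and the split places contribute the two Galois conjugates symmetrically), these factors organize cleanly.

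Next I would compute the local integrals. For a finite place $v$, using Hilbert 90 the surjection $K_v^\times \to K_v^1$, $a \mapsto a/\tau_v(a)$, lets us replace the integral over $K_v^1$ by an integral over $K_v^\times/k_v^\times$; since $\widetilde{\varphi}_v^{(\beta)}(h^{-1}xy) = \widetilde{\varphi}_v^{(\beta)}(xy)$ whenever $h = u/\tau_v(u)$ with $u \in O_v^\times$, the local measure of the set of $h$ for which $h^{-1}xy$ lands in the support lattice becomes a counting function. The support of $\widetilde{\varphi}_v^{(\beta)}$ is $\Pi_v^{e_v'}O_{K_v}$ (ramified/split) or $\pi_v^{\lceil e_v'/2\rceil}O_{K_v}$ (inert), with $e_v' = -\delta_v - \ord_v(\beta)$; translating the condition $h^{-1}xy \in (\text{support})$ through Hilbert 90 and reassembling globally, the relevant ideal is $\yfk^{-1}\Dfk_\beta^{-1}$ where $\Dfk_\beta$ is the ideal whose local components are $\Pi_v^{\delta_v+\ord_v(\beta)}O_{K_v}$ or $\pi_v^{\lfloor(\delta_v+\ord_v(\beta))/2\rfloor}O_{K_v}$ as stated (the floor versus ceiling swap is exactly the passage from the lattice defining $\widetilde{\varphi}_v^{(\beta)}$ to its "inverse" twisted by the conductor). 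Finiteness of $O_K^\times$ and the hypothesis $\ord_v(y_v^2\beta)+\delta_v \ge 0$ at every $v$ guarantee the relevant lattices contain $1$ locally, so nothing vanishes.

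Then I would run the standard "unfolding into the class group" argument: parametrize $K^1\backslash K^1(\AA_k)$ (equivalently, via $a \mapsto a/\bar a$, the quotient coming from $K^\times\backslash \AA_K^\times / \prod_v O_{K_v}^\times$) by the ideal class group $\Pic(O_K)$, choosing integral ideal representatives $\Afk$; the orbit of $1 \in K^1$ under the adelic action, intersected with the support lattice, becomes the set of $x \in \Afk\bar\Afk^{-1}\yfk^{-1}\Dfk_\beta^{-1}$ with $N_{K/k}(x)=1$. Summing over classes, normalizing $dh$ to total mass $1$ (which introduces the $1/\#\Pic(O_K)$), and combining with the prefactor $\chi_K(y)|y|_{\AA_k}$ gives exactly the claimed identity
$$\Theta_{\beta}^*(y,\widetilde{\varphi}^{(\beta)})
= \frac{\chi_K(y)|y|_{\AA_k}}{\#\Pic(O_K)} \sum_{[\Afk] \in \Pic(O_K)} \#\{\, x \in \Afk \bar{\Afk}^{-1} \yfk^{-1}\Dfk_{\beta}^{-1}  : N_{K/k}(x) = 1\,\}.$$
The main obstacle I anticipate is bookkeeping: correctly matching the local exponents — the ceiling in the defining lattice of $\widetilde{\varphi}_v^{(\beta)}$ against the floor in $\Dfk_\beta$ at inert places, and verifying that the Galois-twist $\Afk \mapsto \Afk\bar\Afk^{-1}$ absorbs the norm-one condition without extra factors — together with confirming that the $\tau$-components of the orthogonal groups and the self-dual measure normalizations conspire to leave no stray powers of $q_v$. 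Everything else is a routine unfolding, but that exponent reconciliation is where an off-by-one or a misplaced factor of $2$ would most easily creep in.
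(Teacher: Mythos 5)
Your proposal follows essentially the same route as the paper: starting from the theta-integral formula, identifying $\Oo(V_{\beta})(k_v)$ with $K_v^1$ together with the Galois involution $\tau_v$, invoking Hilbert's theorem 90 via $a \mapsto a/\tau_v(a)$, using the $O_v^\times$- and $\tau_v$-invariance of $\widetilde{\varphi}_v^{(\beta)}$, and unfolding the adelic integral (with $dh$ of total mass $1$) into an average over $\Pic(O_K)$ with the lattice condition $x \in \Afk\bar{\Afk}^{-1}\yfk^{-1}\Dfk_{\beta}^{-1}$. The bookkeeping you flag (the ceiling in the support lattice versus the floor in $\Dfk_{\beta}$, and the role of the hypothesis $\ord_v(y_v^2\beta)+\delta_v\geq 0$ at $\infty$) works out exactly as you anticipate, so the argument is correct and matches the paper's.
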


We summarize what we got so far in the following.

\begin{prop}\label{prop: Examp.9}
Let $\beta \in k^{\times}$ with $\Diff(\beta,\Ccal_K^{(\alpha)}) = \{v_0\}$. For $y \in \AA_k^{\times}$,
$\eta^{(\alpha)}_{\beta}(y) = 0$ if there exists a place $v$ such that $\ord_v(y_v^2\beta)+\delta_v <0$.
Suppose $\ord_v(y_v^2\beta)+\delta_v \geq 0$ for every place $v$ of $k$.
\begin{itemize}
\item[(1)] When $v_0$ is inert in $K$, $\eta^{(\alpha),*}_{\beta}(y)$ is equal to
\begin{eqnarray}
&& -\frac{\chi_K(y)|y|_{\AA_k}}{f_{\infty} \#\Pic(A)} \left(\frac{\ln q_{v_0}}{1+q_{v_0}}\right) \nonumber \\
&& \cdot \bigg[q_{v_0}\Big(\ord_{v_0}(y_{v_0}^2\beta)+\delta_{v_0}+1-\frac{1+(-1)^{e_{v_0}'}}{2}\Big)
 +\Big(\ord_{v_0}(y_{v_0}^2\beta)+\delta_{v_0}+1+\frac{1+(-1)^{e_{v_0}'}}{2}\Big) \bigg] \nonumber \\
&&\cdot \sum_{[\Afk] \in \Pic(O_K)} \#\{ x \in \Afk \bar{\Afk}^{-1} \yfk^{-1}\Dfk_{\beta}^{-1}  : N_{K/k}(x) = 1\}.
\nonumber
\end{eqnarray}
\item[(2)] When $v_0$ is ramified in $K$, $\eta^{(\alpha),*}_{\beta}(y)$ is equal to
%\begin{eqnarray}
%&& 
$$-\frac{\chi_K(y)|y|_{\AA_k}\ln q_{v_0}}{f_{\infty} \#\Pic(A)} \cdot \big(\ord_{v_0}(y_{v_0}^2\beta)+\delta_{v_0}+1\big) %\nonumber \\
%&& 
\cdot \sum_{[\Afk] \in \Pic(O_K)} \#\{ x \in \Afk \bar{\Afk}^{-1} \yfk^{-1}\Dfk_{\beta}^{-1}  : N_{K/k}(x) = 1\}.$$
%\nonumber
%\end{eqnarray}
\end{itemize}
Here $\yfk$ and $\Dfk_\beta$ are the ideals of $O_K$ introduced in \text{\rm Lemma~\ref{lem: Examp.8}}.
\end{prop}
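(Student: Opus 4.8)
Proposition~\ref{prop: Examp.9} is, in essence, an assembly of Theorem~\ref{thm: Deriv.1} with the local Whittaker computations of Lemmas~\ref{lem: Examp.4}--\ref{lem: Examp.7} and the theta computation of Lemma~\ref{lem: Examp.8}, so the work is largely bookkeeping. First I would strip off the normalizing Hecke factor. Since $\#\Diff(\beta,\Ccal_K^{(\alpha)})\ge 1$, Proposition~\ref{prop: Fourier.6} gives $E_\beta^*(y,0,\Phi_{\varphi^{(\alpha)}})=0$ for all $y$, so differentiating $\widetilde{E}(g,s,\Phi_{\varphi^{(\alpha)}})=\widetilde{L}(s+1,\chi_K)\,E(g,s,\Phi_{\varphi^{(\alpha)}})$ and passing to the $\beta$-th Fourier coefficient at $\mfk(y)$ kills the term carrying $\widetilde{L}'$ and leaves
$$\eta_\beta^{(\alpha),*}(y)=\widetilde{L}(1,\chi_K)\cdot\frac{\partial}{\partial s}E_\beta^*(y,s,\Phi_{\varphi^{(\alpha)}})\Big|_{s=0}.$$
The functional equation $\widetilde{L}(s,\chi_K)=\widetilde{L}(1-s,\chi_K)$ then gives $\widetilde{L}(1,\chi_K)=\widetilde{L}(0,\chi_K)=L(0,\chi_K)=\#\Pic(O_K)/(f_\infty\#\Pic(A))$ by Remark~\ref{rem: Examp.3}(2), so everything is reduced to the unmodified central derivative.

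\textbf{The vanishing claim.} Suppose $\ord_v(y_v^2\beta)+\delta_v<0$ at some place $v$. Such a $v$ necessarily lies in $S=S(\chi_K,\psi,y^2\beta,\Phi_{\varphi^{(\alpha)}})$, since a ``good'' place has $\delta_v=0$ and $\ord_v(y_v^2\beta)=0$. A direct local computation of the Whittaker integral $W_{v,y_v^2\beta}(s,\Phi_{v,\varphi_v^{(\alpha)}})$ --- of exactly the type carried out for $v_0$ in Lemmas~\ref{lem: Examp.4}--\ref{lem: Examp.7}, with the split case handled analogously using the Fourier transform of $\varphi_v^{(\alpha)}$ recorded in the proof of Lemma~\ref{lem: Examp.1} --- shows that this function vanishes identically in $s$. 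Combined with the factorization $E_\beta^*(y,s,\Phi_{\varphi^{(\alpha)}})=\chi_K(y)|y|_{\AA_k}^{-s+1}\,\Lambda_1^S(s,\chi_K)\prod_{w\in S}W_{w,y_w^2\beta}(s,\Phi_{w,\varphi_w^{(\alpha)}})$ coming from (\ref{eq: Fourier.1}) and (\ref{eq: Fourier.2}), the whole Fourier coefficient vanishes identically, hence so does its derivative; thus $\eta_\beta^{(\alpha),*}(y)=0$.

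\textbf{The main case.} Now assume $\ord_v(y_v^2\beta)+\delta_v\ge 0$ for every $v$. The quadratic space $V_\beta=(K,\beta N_{K/k})$ is anisotropic, being a nonzero scalar multiple of the norm form of the field extension $K/k$, so Theorem~\ref{thm: Deriv.1} applies with $a=y$ (so $a_{v_0}\star\beta=y_{v_0}^2\beta$). I would take $\widetilde{\varphi}=\widetilde{\varphi}^{(\beta)}$: hypothesis (i) holds because $\widetilde{\varphi}_v^{(\beta)}=\varphi_v^{(\alpha)}$ for $v\ne v_0$ under the identification $V_{\beta,v}\cong\Ccal_{K,v}^{(\alpha)}$, and hypothesis (ii) holds because $W_{v_0,y_{v_0}^2\beta}(0,\Phi_{v_0,\widetilde{\varphi}_{v_0}^{(\beta)}})\ne 0$ by Lemma~\ref{lem: Examp.4} ($v_0$ inert) or Lemma~\ref{lem: Examp.6} ($v_0$ ramified). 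Theorem~\ref{thm: Deriv.1} then yields
$$\frac{\partial}{\partial s}E_\beta^*(y,s,\Phi_{\varphi^{(\alpha)}})\Big|_{s=0}=2\cdot\frac{W_{v_0,y_{v_0}^2\beta}'(0,\Phi_{v_0,\varphi_{v_0}^{(\alpha)}})}{W_{v_0,y_{v_0}^2\beta}(0,\Phi_{v_0,\widetilde{\varphi}_{v_0}^{(\beta)}})}\cdot\Theta_\beta^*(y,\widetilde{\varphi}^{(\beta)}).$$

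\textbf{Substitution and conclusion.} It remains to insert the explicit values. When $v_0$ is inert, the hypothesis $\Diff(\beta,\Ccal_K^{(\alpha)})=\{v_0\}$ forces $(\alpha\epsilon_{v_0},D)_{v_0}=-(\beta,D)_{v_0}$, so $e_{v_0}$ and $e_{v_0}'$ have opposite parity; this selects the applicable branch of Lemma~\ref{lem: Examp.4} and of Lemma~\ref{lem: Examp.5}, and in the quotient the common factor $q_{v_0}^{-\delta_{v_0}/2}$ cancels while the sign ratio $(-1)^{e_{v_0}'-1}/(-1)^{e_{v_0}'}=-1$ produces the overall minus sign, leaving $-\frac{\ln q_{v_0}}{1+q_{v_0}}$ times the bracketed quantity of part (1). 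When $v_0$ is ramified, Lemmas~\ref{lem: Examp.6} and \ref{lem: Examp.7} give the ratio $-\ln q_{v_0}\,(\ord_{v_0}(y_{v_0}^2\beta)+\delta_{v_0}+1)$, the factors $\varepsilon_{v_0}(V_{\beta,v_0})q_{v_0}^{-(\delta_{v_0}+1)/2}$ cancelling. In either case, multiplying by $\Theta_\beta^*(y,\widetilde{\varphi}^{(\beta)})$ from Lemma~\ref{lem: Examp.8} and by $L(0,\chi_K)=\#\Pic(O_K)/(f_\infty\#\Pic(A))$ from the first step --- so that the $\#\Pic(O_K)$'s cancel --- produces exactly the formulas in (1) and (2). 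The only genuinely new ingredient beyond quoting earlier results is the identical vanishing of the local Whittaker functions at places failing the support inequality, which is routine; the part demanding the most care is the bookkeeping itself --- carrying the $\widetilde{L}$-normalization correctly through the functional equation, and keeping straight the $e_{v_0}$ versus $e_{v_0}'$ parity dichotomy that decides which branch of each local Whittaker lemma is in force in the inert case.
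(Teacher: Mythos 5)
Your proposal is correct and follows essentially the route the paper intends: Proposition~\ref{prop: Examp.9} is stated there as a summary, obtained exactly as you do by stripping the $\widetilde{L}(s+1,\chi_K)$ normalization (using $E_\beta^*(y,0,\Phi_{\varphi^{(\alpha)}})=0$ and $\widetilde{L}(1,\chi_K)=L(0,\chi_K)=\#\Pic(O_K)/(f_\infty\#\Pic(A))$), applying Theorem~\ref{thm: Deriv.1} with $a=y$ and $\widetilde{\varphi}=\widetilde{\varphi}^{(\beta)}$, and substituting Lemmas~\ref{lem: Examp.4}--\ref{lem: Examp.8}; your bookkeeping of the $e_{v_0}$ versus $e_{v_0}'$ parity and of the cancellations checks out. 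The only cosmetic remark is that the vanishing claim when some $\ord_v(y_v^2\beta)+\delta_v<0$ follows at once from the $\nfk(c)$-invariance ($c\in O_v$) of $\varphi_v^{(\alpha)}$ in Lemma~\ref{lem: Examp.1}, so no fresh local Whittaker computation is needed there.
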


\begin{rem}\label{rem: Examp.10}
For our purpose in the next section, we shall express the counting number 
$\#\{ x \in \Afk \bar{\Afk}^{-1} \yfk^{-1}\Dfk_{\beta}^{-1}  : N_{K/k}(x) = 1\}$ by a different form.
Given an arbitrary $\gamma \in k^{\times}$,
consider the quaternion algebra $\Dcal_{\gamma} := K + K j_{\gamma}$ over $k$, where $j_{\gamma}^2 = -\gamma$ and $j_{\gamma} a = \bar{a} j_{\gamma}$ for every $a \in K$. Then $\Dcal_{\gamma}$ splits at $v$ if and only if $\chi_{K,v}(-\gamma) = 1$.
Moreover, the reduced norm form on $\Dcal_{\gamma}$ is $N_{K/k} \oplus (\gamma K_{F/k})$.
For each finite place $v$ of $k$ which is ramified in $K$, let $\Pfk_v$ be the prime ideal of $O_K$ lying above $v$.
Set
$$\dfk_{K,\gamma}:= \prod_{v \neq \infty, \text{$v$ is ramified in $K$,} \atop \chi_{K,v}(-\gamma) = 1} \Pfk_v.$$
Take an ideal $\Cfk$ of $O_K$ such that for each finite place $v$ of $k$,
$$\ord_v\big(\text{N}_{K/k}(\Cfk)\big) =
\begin{cases} 
\ord_v(\gamma), & \text{ if $v$ is ramified or split in $K$,} \\
2\cdot \lfloor\frac{\ord_v(\gamma)}{2}\rfloor, & \text{ if $v$ is inert in $K$.}
\end{cases}$$
For each place $v$ of $k$ where $\chi_{K,v}(-\gamma) = 1$, take $\xi_v \in K_v^{\times}$ such that $\text{N}_{K/k}(\xi_v) = -\gamma$. Set
$$R^{(\gamma)}:=\{a+bj_{\gamma} \mid a \in \dfk_{K,\gamma}^{-1}, \ b \in \dfk_{K,\gamma}^{-1}\Cfk^{-1}, \
a \equiv \xi_v b \bmod O_{K_v} \ \forall v \text{ ramified in $K$ and $\chi_{K,v}(-\gamma) = 1$} \}.$$
It is observed that $R^{(\gamma)}$ is a maximal $A$-order in $\Dcal_{\gamma}$ (by computing the discriminant of $R^{(\gamma)}$ over $A$).\\

(1) Given $\beta \in k^{\times}$ with $\Diff(\beta,\Ccal_K^{(\alpha)}) = \{v_0\}$ and $v_0 \neq \infty$, take $\gamma = \gamma(\alpha,\beta) = -\beta/\alpha$. Then $\Dcal_{\gamma}$ is ramified precisely at $v_0$ and $\infty$. Moreover, we have the following isomorphism (of central simple algebras over $k$):
\begin{equation}
\label{eqn: Examp.5}
\begin{tabular}{ccc}
$\Dcal_{\beta} \otimes_k \Dcal_{\alpha}$ & $\cong$ & $\Mat_2(\Dcal_{\gamma})$ \\
$1 \otimes (a_1 + a_2 j_{\alpha})$ & $\longmapsto$ & $\begin{pmatrix}a_1 & a_2 \\ -\alpha \overline{a_2} & \overline{a_1} \end{pmatrix}$\\
$(a_3+a_4 j_{\beta})\otimes 1$ & $\longmapsto$ & $\begin{pmatrix}a_3& a_4 j_{\gamma} \\ -\alpha a_4 j_{\gamma}& a_3\end{pmatrix}$ \\
\end{tabular}
\end{equation}
Viewing $\Dcal_{\alpha}$ and $\Dcal_{\beta}$ as subalgebras in $\Mat_2(\Dcal_{\gamma})$ via the above isomorphism, let $$\widetilde{R}:= \{ M \in \Mat_2(R^{(\gamma)}) : M u = u M,\ \forall u \in \Dcal_{\alpha}\} = \Mat_2(R^{(\gamma)}) \cap \Dcal_{\beta} \subset \Dcal_{\beta}.$$
Set $\widetilde{R}_{\Afk}:= \Afk \widetilde{R} \Afk^{-1}$ for each fractional ideal $\Afk$ of $O_K$. 

Without loss of generality, assume $\alpha \in A$. Choose a suitable additive character $\psi$ so that $\ord_v(\alpha)+\delta_v$ is even for every place $v$ of $k$ inert in $K$. 
%Let $\Dfk_{(\alpha)}$ be the fractional ideal of $O_F$ such that for each finite place $v$ of $k$,
%$$\Dfk_{(\alpha),v} = 
%\begin{cases}
%\Pi_v^{\ord_v(\alpha)+\delta_v}O_{F_v}, & \text{ if $v$ is ramified or split in $F$,}\\
%\pi_v^{(\ord_v(\alpha)+\delta_v)/2}O_{F_v}, & \text{ if $v$ is inert in $F$.}
%\end{cases}$$
Then sending $x$ to $x j_\beta$ for $x \in K$ induces a bijection between $\{x \in \Afk \bar{\Afk}^{-1} \yfk^{-1}\Dfk_{\beta}^{-1}  : N_{K/k}(x) = 1\}$ and
$$
\{M \in \widetilde{R}_{\Afk}\yfk^{-1} \overline{\Dfk}_{\alpha}^{-1} : M u = u M, \ \forall u \in \Dcal_\alpha; \ M \cdot a  = \overline{a} \cdot M, \ \forall a \in K; \text{ and } M^2 = -\beta\}.$$
Here we embed $K \hookrightarrow \Dcal_\gamma \hookrightarrow \Mat_2(\Dcal_\gamma)$.
%by $x$ 
%$x$ & $\longmapsto$ & $x \cdot j_{\beta}$.
%\end{tabular}$$
${}$\\
(2) When $\Diff(\beta,\Ccal_K^{(\alpha)}) = \{\infty\}$, we have $\beta/\alpha \in N_{K/k}(K^{\times})$ and a bijection
$$\{x \in \Afk \bar{\Afk}^{-1} \yfk^{-1}\Dfk_{\beta}^{-1}  : N_{K/k}(x) = 1\} \quad \cong \quad \{x \in \Afk \bar{\Afk}^{-1}\yfk^{-1} \Dfk_{\alpha}^{-1} : N_{K/k} = \beta/\alpha \}.$$
\end{rem}

\section{Algebro-geometric aspects}\label{Alg-geo}

In this section, we assume (without loss of generality) $\alpha \in A$ with $\ord_v(\alpha) + \delta_v$ is even for every place $v$ of $k$ inert in $K$, and connect the non-zero Fourier coefficients of $\eta^{(\alpha)}$ with the degree of special cycles on the coarse moduli scheme of rank one Drinfeld $O_K$-modules. %First of all, we recall briefly the theory of Drinfeld modules (cf.\ \cite{Dri} and \cite{Lau} for further details).

\subsection{Drinfeld modules and the moduli schemes}\label{sec: Alg-geo.1}

First of all, we recall briefly the properties of Drinfeld modules which we need, and refer the readers to \cite{Dri} and \cite{Lau} for further details.
Fix a place $\infty$ of the global function field $k$ (with odd characteristic $p$). Denote by $A$ the ring of functions in $k$ regular outside $\infty$. Let $F$ be an $A$-field, i.e.\ $F$ is a field together with a ring homomorphism $\iota:A \rightarrow F$. We can identify $\End(\GG_{a/F})$ with the twisted polynomial ring $F\{\tau\}$, where $\tau : \GG_{a/F} \rightarrow \GG_{a/F}$ is the Frobenius map $(x \mapsto x^p)$ and $\tau a = a^p \tau$ for every $a \in F$. We have two homomorphisms $\epsilon : F \rightarrow F\{\tau\}$, $\epsilon(a) := a$, and $D_{\tau} : F\{ \tau\} \rightarrow F$, $D_{\tau}(\sum_i a_i \tau^i):= a_0$.

\begin{defn}\label{defn: Alg-geo.1}
Suppose an $A$-field $F$ and a positive integer $r$ is given.\\
$(1)$ A \text{\it Drinfeld $A$-module over $F$ of rank $r$} is a ring homomorphism $\phi: A \rightarrow F\{\tau\}$ such that $\iota = D_{\tau} \circ \phi$, $\phi \neq \epsilon \circ \iota$, and $p^{\deg_{\tau} \phi_a} = |a|_{\infty}^r$ for every $a \in A$.\\
$(2)$ Let $S$ be a scheme over $A$. A \text{\it Drinfeld $A$-module over $S$ of rank $r$} is a line bundle $L$ over $S$ together with a homomorphism $\phi: A \rightarrow \End(L)$ (where $\End(L)$ is the ring of endomorphisms of $L$ as a group scheme over $S$) such that
\begin{itemize}
\item[(i)] for any $a \in A$ the differential of $\phi_a$ is multiplication by $a$;
\item[(ii)] given a field $F$ with a morphism $\Spec(F) \rightarrow S$, the corresponding homomorphism $A \rightarrow F\{ \tau\}$ is a Drinfeld $A$-module over $F$ of rank $r$.
\end{itemize}
$(3)$ A morphism $f: (L,\phi) \rightarrow (L',\phi')$ is a homomorphism from $L$ to $L'$ (as group schemes over $S$) satisfying that $ f \cdot \phi_a = \phi'_a \cdot f$ for every $a \in A$.
\end{defn}

Let $(L,\phi)$ be a Drinfeld $A$-module over $S$ of rank $r$. For a non-zero ideal $I \lhd A$, let 
$$\phi[I]:= \bigcap_{a \in I} \ker\big (\phi_a\big),$$
which is a finite flat group scheme over $S$. 
Let $\Char_A(S)$ be the image of $S$ in $\Spec(A)$ (called the \text{\it $A$-characteristic} of $S$).
Then $\phi[I]$ is \'etale over $S$ if $\Char_A(S)$ does not intersect $V(I):= \{ \text{prime } \pfk \lhd A \mid \pfk \supset I\}$. 

\begin{defn}\label{defn: Alg-geo.2}
Given a Drinfeld $A$-module $(L,\phi)$ over $S$ of rank $r$,
a \text{\it structure of level $I$ on $L$} is an $A$-module homomorphism $\ell: (I^{-1}/A)^r \rightarrow \Mor(S,L)$ such that for every $\pfk \in V(I)$, $\phi[\pfk]$ (as a divisor of $L$) coincides with the sum of the divisors $\ell(a)$, $a \in (\pfk^{-1}/A)^r$.
\end{defn}

\begin{rem}\label{rem: Alg-geo.3}
If $\Char_A(S)$ does not intersect $V(I)$, then a structure of level $I$ gives an isomorphism from the constant group scheme $(I^{-1}/A)^r$ over $S$ to $\phi[I]$.
\end{rem}

Fix a positive integer $r$ and a non-zero ideal $I$ of $A$. For each scheme $S$ over $A$, 
let $\Mcal_{A,I}^r(S)$ be the category whose objects are the triples $(L,\phi,\ell)$, where $(L,\phi)$ is a Drinfeld $A$-module over $S$ of rank $r$ and $\ell$ is a structure of level $I$ on $L$, and the morphisms are the isomorphism between triples. We then obtain a fibered category $\Mcal_{A,I}^r$ over the category $\Sch_{/ A}$ of schemes over $A$. 

\begin{thm}\label{thm: Alg-geo.4}
\text{\rm (cf.\ \cite{Dri} and \cite{Lau})} $(1)$ When $0 \neq I$ and $V(I)$ contains more than one element, $\Mcal_{A,I}^r$ is represented by a scheme of finite type over $A$.\\
$(2)$ $\Mcal_{A,A}^r$ is representable by a Deligne-Mumford algebraic stack of finite type over $A$.\\
$(3)$ Denote by $M_A^1$ the corresponding set-valued functor of isomorphism classes of objects of $\Mcal_{A,A}^1$. Then $M_A^1$ has a coarse moduli scheme $\mathbf{M}_A = \Spec(O_{H_A})$, where $H_A$ is the Hilbert class field of $A$ (i.e.\ $H_A$ is the maximal unramified abelian extension over $k$ in which $\infty$ splits completely) and $O_{H_A}$ is the integral closure of $A$ in $H_A$.
\end{thm}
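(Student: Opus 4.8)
The plan is to recall the proof due to Drinfeld and Laumon, whose core is the rigidification of the moduli problem by trivializing the underlying line bundle. First I would introduce the auxiliary functor $\widetilde{\Mcal}^r_{A,I}$ that sends a scheme $S$ over $A$ to the set of pairs $(\phi,\ell)$, where $\phi\colon A\to\End(\GG_{a/S})$ makes $(\GG_{a/S},\phi)$ a Drinfeld $A$-module of rank $r$ and $\ell$ is a structure of level $I$. Writing $\phi_a=\sum_i c_i(a)\tau^i$ as a twisted polynomial with $c_0(a)=\iota(a)$ and $c_i(a)\in\Gamma(S,\Ocal_S)$, the multiplicativity relations $\phi_{ab}=\phi_a\phi_b$ become polynomial identities among the coefficients, the rank condition fixes the top $\tau$-degree of each $\phi_a$, and the level-$I$ condition of Definition~\ref{defn: Alg-geo.2} is an equality of relative effective Cartier divisors on $\GG_{a/S}$, hence a closed condition. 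Choosing a finite set of generators of $A$ over $\FF_q$ and arguing Zariski-locally on $\Spec A$, one checks that $\widetilde{\Mcal}^r_{A,I}$ is represented by a scheme $\widetilde{M}$ affine and of finite type over $A$.

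Next I would realize $\Mcal^r_{A,I}$ as a quotient. The torus $\GG_m$ acts on $\widetilde{M}$ by rescaling the coordinate of $\GG_a$, via $u\cdot(\phi,\ell)=(u\phi u^{-1},\,u\ell)$, and $\Mcal^r_{A,I}$ is the fppf quotient stack $[\widetilde{M}/\GG_m]$. For every object $(L,\phi)$ over a field, $\Aut(L,\phi)$ lies in $\End(L,\phi)^\times$, a finite group of roots of unity, since $\End(L,\phi)$ is an order in an imaginary extension of $k$ of degree dividing $r$. Hence $\Mcal^r_{A,I}$ has finite, unramified automorphism groups and is a Deligne--Mumford algebraic stack of finite type over $A$; this yields part $(2)$. (Once part $(1)$ is in hand one may alternatively present $\Mcal^r_{A,A}$ as a finite quotient $[\widetilde{M}_{A,\mfk^N}/\GL_r(A/\mfk^N)]$ for a suitably chosen prime $\mfk$ and $N\gg 0$, and re-derive the stack statement from the scheme statement.)

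For part $(1)$ I would show that the $\GG_m$-action on $\widetilde{M}$ is free precisely when $V(I)$ contains more than one element. It suffices to prove that over every $A$-field $F$, a rank-$r$ Drinfeld module $(\GG_{a/F},\phi)$ with a level-$I$ structure admits no nontrivial automorphism $\zeta$: such a $\zeta$ is a root of unity fixing the divisor $\phi[\pfk]$ for each $\pfk\in V(I)$. This is where the hypothesis enters, since for the given $A$-characteristic at least one $\pfk\in V(I)$ is prime to it, so by Remark~\ref{rem: Alg-geo.3} the level structure identifies $\phi[\pfk]$ with the constant group scheme $(A/\pfk)^r$ on which $\zeta$ acts as the scalar $\zeta\bmod\pfk$; combining this over the (at least two) primes of $V(I)$ forces $\zeta=1$. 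Freeness of the action then makes the fppf quotient $\widetilde{M}/\GG_m$ representable by a scheme of finite type over $A$ which represents $\Mcal^r_{A,I}$. I expect this step to be the main obstacle: one needs rigidity at the geometric points whose $A$-characteristic divides $I$, where $\phi[I]$ is not étale and the level structure is a Drinfeld (Katz--Mazur) level structure rather than a genuine basis.

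Finally, for part $(3)$ I would specialize to $r=1$, so that $\phi_a=\iota(a)+\cdots$ with $\deg_\tau\phi_a$ determined by $|a|_\infty$, and $\phi$ automatically has complex multiplication by $A$. The decisive input is the explicit class field theory of Hayes and Drinfeld: every rank-one Drinfeld $A$-module over $\overline{k}$ can be defined over the Hilbert class field $H_A$ of $A$, the isomorphism classes over $\overline{k}$ form a torsor under $\Pic(A)$, and $\gal(H_A/k)\cong\Pic(A)$ permutes them through the Artin reciprocity map. It follows that the set-valued functor $M_A^1$ admits a coarse moduli scheme, namely $\Spec(O_{H_A})$, which is finite and étale over $\Spec A$ of degree $\#\Pic(A)=[H_A:k]$, where $O_{H_A}$ is the integral closure of $A$ in $H_A$.
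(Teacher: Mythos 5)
The paper offers no proof of Theorem~\ref{thm: Alg-geo.4}: it is quoted directly from \cite{Dri} and \cite{Lau}, so there is no internal argument to measure yours against. Your sketch reconstructs the standard proof from those sources --- trivialize the line bundle to get a rigidified functor represented by an affine scheme of finite type over $A$, recover $\Mcal^r_{A,I}$ as a $\GG_m$-quotient, use a level structure at a prime of $V(I)$ away from the $A$-characteristic to kill automorphisms, and invoke Drinfeld--Hayes explicit class field theory for the rank-one coarse space --- and this is essentially the route of the cited references, so the overall plan is sound. Three places where your outline is looser than what those proofs actually need: (a) rigidity has to be established over arbitrary (in particular non-reduced) base schemes, not only over fields; this is where Drinfeld's rigidity lemma enters, reducing triviality of the automorphism group scheme to its geometric fibres, and you only address the field case. (b) The sentence ``combining this over the (at least two) primes of $V(I)$ forces $\zeta=1$'' misstates the mechanism: a single prime $\pfk\in V(I)$ prime to the characteristic already suffices, since an automorphism is multiplication by a unit of the base field and fixing one nonzero point of the \'etale group scheme $\phi[\pfk]$ forces it to be $1$ (there is no meaningful ``$\zeta\bmod\pfk$''); the second prime serves only to guarantee that such a $\pfk$ exists in every $A$-characteristic. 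Relatedly, representability of the $\GG_m$-quotient by a scheme does not follow from freeness alone; in the standard argument the sections $\ell(a)$ attached to the two primes give nowhere-vanishing sections of $L$ over the two loci covering $\Spec(A)$, hence canonical trivializations (equivalently, Drinfeld rigidifies with auxiliary level and divides by a finite group acting freely). (c) $\End(L,\phi)$ is an order in an imaginary extension of degree dividing $r$ only in generic characteristic --- in special characteristic it can be an order in a division algebra --- although the conclusion you actually need (automorphism groups finite and \'etale, of order prime to $p$) does hold in all characteristics. With these repairs your argument agrees with the proofs in \cite{Dri} and \cite{Lau}, including the identification $\mathbf{M}_A=\Spec(O_{H_A})$ in part (3) via the $\Pic(A)$-torsor structure on rank-one modules and the Artin reciprocity map.
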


\subsection{Special morphism}\label{sec: Alg-geo.2}

Let $K$ be an imaginary (with respect to $\infty$) quadratic field over $k$ and $O_K$ denotes the integral closure of $A$ in $K$.
Let $S$ be a scheme over $O_K$. Given a Drinfeld $O_K$-module $(L,\phi)$ over $S$ of rank one and a non-zero ideal $\Ifk$ of $O_K$, one can associate a rank one Drinfeld $O_K$-module structure $\phi^{\Ifk}$ on $L^{\Ifk}:= L/\phi[\Ifk]$ such that the canonical homomorphism $u_{\Ifk}: L\rightarrow L^{\Ifk}$ is a morphism of Drinfeld modules (i.e.\ $u_{\Ifk} \cdot \phi_a = \phi^{\Ifk}_a \cdot u_{\Ifk}$ for every $a \in A$). Note that $\phi$ and $\phi^{\Ifk}$ can be viewed as Drinfeld $A$-modules of rank $2$ over $S$ \lq\lq with complex multiplication by $O_K$.\rq\rq

Recall that $\Dcal_{\alpha} = K + K j_{\alpha}$ where $j_{\alpha}^2 = -\alpha$ and $j_{\alpha} a = \bar{a} j_{\alpha}$ for every $a \in K$. Fix an embedding $\Dcal_{\alpha} \hookrightarrow \Mat_2(K)$ defined by
\begin{equation}
\label{eqn: Alg-geo.t}
 a_1 + a_2 j_{\alpha} \longmapsto \begin{pmatrix} a_1 & a_2 \\ -\alpha \overline{a_2} & \overline{a_1}\end{pmatrix}.
 \end{equation}
Let $$O_{\Dcal_{\alpha}}:= \Dcal_{\alpha} \cap \Mat_2(O_K).$$
We extend $\phi$ (resp.\ $\phi^{\Ifk}$) to a homomorphism from $\Mat_2(O_K)$ into $\End(L^{\oplus 2})$ (resp.\ $\End(L^{\Ifk, \oplus 2}))$ in the canonical way, and set
$$ \Hom_{\Dcal_{\alpha}}\big((L^{\Ifk, \oplus 2}, \phi^{\Ifk}),(L^{\oplus 2},\phi)\big):= \{ f \in \Hom(L^{\Ifk,\oplus 2},L^{\oplus 2}) : f \phi^{\Ifk}_d = \phi_d f,\ \forall d \in O_{\Dcal_{\alpha}}\}.$$
Every $f \in \Hom_{\Dcal_{\alpha}}\big((L^{\Ifk, \oplus 2}, \phi^{\Ifk}),(L^{\oplus 2},\phi)\big)$ is called a $\Dcal_{\alpha}$-morphism from $(L^{\Ifk,\oplus 2},\phi^{\Ifk})$ to $(L^{\oplus 2},\phi)$.

\begin{defn}\label{defn: Alg-geo.5}
A $\Dcal_{\alpha}$-morphism $f : (L^{\Ifk,\oplus 2},\phi^{\Ifk}) \rightarrow (L^{\oplus 2},\phi)$  is called \text{\it special} if $f \cdot \phi^{\Ifk}_a = \phi_{\bar{a}} \cdot f$ for every $a \in O_K$, where $a \mapsto \bar{a}$ is the non-trivial automorphism of $K$ over $k$, and $O_K$ embeds into $\Mat_2(O_K)$ by $a \mapsto \begin{pmatrix}a&0\\0&a\end{pmatrix}$.
\end{defn}

\begin{rem}\label{rem: Alg-geo.6}
Given a Drinfeld $O_K$-module $(L,\phi)$ over $S$ of rank one and a non-zero ideal $\Ifk$ of $O_K$, let $\SP_S^{(\alpha)} (L,\phi,\Ifk)$ be the set of special $\Dcal_\alpha$-morphisms from $(L^{\Ifk,\oplus 2},\phi^{\Ifk})$ to $(L^{\oplus 2},\phi)$.
Let $u_{\Ifk}^{\oplus 2} : L^{\oplus 2} \rightarrow L^{\Ifk,\oplus 2}$ be the morphism induced by $u_{\Ifk}$.
Then $\SP_S^{(\alpha)}(L,\phi,\Ifk)\cdot u_{\Ifk}^{\oplus 2}$ is contained in the ring $\End_{\Dcal_{\alpha}}(L^{\oplus 2},\phi)$ of $\Dcal_{\alpha}$-endomorphisms of $(L^{\oplus 2},\phi)$ and
$$(f \cdot u_{\Ifk}^{\oplus 2}) \cdot \phi_a = \phi_{\bar{a}} \cdot (f \cdot u_{\Ifk}^{\oplus 2})$$ 
for every $f \in \SP_S^{(\alpha)}(L,\phi,\Ifk)$ and $a \in O_K$.
Take $S = \Spec(\kappa)$ where $\kappa$ is an algebraically closed $O_K$-field. 
\begin{itemize}
\item[(1)] When $\Char_{A}(\kappa) = (0)$, it is known that $\End_A(L,\phi) = O_K$.
Therefore we get $\End_{\Dcal_{\alpha}}(L^{\oplus 2}, \phi) = O_K$ and $\SP_{\kappa}^{(\alpha)}(L,\phi,\Ifk) = 0$.
\item[(2)] Suppose $\Char_A(\kappa) = \pfk$ splits in $K$.
Then $(L,\phi)$ is not supersingular (as a Drinfeld $A$-module over $\kappa$). Therefore $\End_A(L,\phi)\otimes_A k = K$ (cf.\ \cite{Gek}, also see \cite[Proposition 1.7]{Dri}) and $\SP_{\kappa}^{(\alpha)}(L,\phi,\Ifk) = 0$.
\item[(3)] When $\Char_A(\kappa) = \pfk$ does not split in $K$, $(L,\phi)$ is supersingular and $\End_A(L,\phi) = R$ is a maximal $A$-order of the quarternion algebra $\Dcal_{\pfk}$ over $k$ ramified precisely at $\pfk$ and $\infty$. 
Write $\Dcal_{\pfk} = K + Kj$ where $j^2 \in k^{\times}$ and $ja = \bar{a} j$ for every $a$ in $F$.
We embed $\Mat_2(K)$ into $\Mat_2(\Dcal_{\pfk})$ via $K \hookrightarrow K+Kj = \Dcal_{\pfk}$. Together with (\ref{eqn: Alg-geo.t}), we obtain an embedding from $\Dcal_{\alpha}$ into $\Mat_2(\Dcal_{\pfk})$.
Let 
$$\widetilde{\Dcal}:= \{ M \in \Mat_2(\Dcal_{\pfk}): M d = d M,\ \forall d \in \Dcal_{\alpha}\} \quad \text{ and } \quad \widetilde{R}:= \Mat_2(R) \cap \widetilde{\Dcal}.$$
Writing $\widetilde{\Dcal} = K + K\tilde{j}$ where $\tilde{j}^2 \in k^{\times}$ and $\tilde{j}a = \bar{a} \tilde{j}$ for every $a \in F$, we then have
$$\SP_{\kappa}^{(\alpha)}(L,\phi,\Ifk) \cdot u_{\Ifk}^{\oplus 2} = \widetilde{R} \Ifk \cap K\tilde{j}.$$
Moreover, for every non-zero ideal $\Afk$ of $O_K$,
$$\End_{\Dcal}(L^{\Afk,\oplus 2},\phi^{\Afk}) = \Afk \widetilde{R} \Afk^{-1} (=: \widetilde{R}_{\Afk})$$ 
and
$$\SP_{\kappa}^{(\alpha)}(L^{\Afk,\oplus 2},\phi^{\Afk},\Ifk) \cdot u^{\Afk,\oplus 2}_{\Ifk} = \widetilde{R}_{\Afk} \Ifk \cap K\tilde{j},$$
where $u_{\Ifk}^{\Afk}$ is the canonical morphism from $(L^{\Afk},\phi^{\Afk})$ to $(L^{\Afk\Ifk},\phi^{\Afk\Ifk})$.
\end{itemize}
\end{rem}

\begin{defn}
Given a non-zero ideal $\Ifk \lhd O_K$ and $0\neq \beta \in A$, we define the fibered category $\Zcal(\Ifk,\beta)$ over $\Sch_{/A}$ as follows: for each scheme $S$ over $A$, $\Zcal(\Ifk,\beta)(S)$ is the category of triples $(L,\phi, b)$ where $(L,\phi)$ is a Drinfeld $O_K$-module over $S$ of rank one and $b \in \SP_S^{(\alpha)}(L,\phi,\Ifk)$ such that $$\tilde{b}^2 = \phi_{-\beta} \in \End_A(L,\phi), \quad \text{ where $\tilde{b}:= b \cdot u_{\Ifk}^{\oplus 2}$.}$$
The morphisms in the category $\Zcal(\Ifk,\beta)(S)$ are the isomorphisms between triples. We denote by $\pr: \Zcal(\Ifk,\beta) \rightarrow \Mcal_{O_K,O_K}^1$ the forgetful functor.
\end{defn}

\begin{prop}\label{prop: Alg-geo.7}
The forgetful functor $\pr: \Zcal(\Ifk,\beta) \rightarrow \Mcal_{O_K,O_K}^1$ is relatively representable, finite, and unramified.
\end{prop}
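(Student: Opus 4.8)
\emph{Proof proposal.} By the meaning of relative representability (and since $\Mcal^1_{O_K,O_K}$ classifies rank one Drinfeld $O_K$-modules with trivial level structure), it suffices to fix a scheme $S$ over $O_K$ together with a rank one Drinfeld $O_K$-module $(L,\phi)$ over $S$, and to show that the functor on $\Sch_{/S}$ sending $T$ to the set of $b \in \SP^{(\alpha)}_T\big((L,\phi)_T,\Ifk\big)$ with $\tilde{b}^2 = \phi_{-\beta}$ is represented by an $S$-scheme which is finite and unramified over $S$.

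For relative representability I would, working locally on $S$, trivialize $L$ and identify a homomorphism of group schemes $L^{\Ifk,\oplus 2}_T \to L^{\oplus 2}_T$ with a $2\times 2$ matrix of twisted polynomials over $\mathcal{O}_T$. Being a $\Dcal_\alpha$-morphism, being special in the sense of Definition~\ref{defn: Alg-geo.5}, and the factorization of the associated endomorphism $\tilde{b}$ through $u_{\Ifk}^{\oplus 2}$ are all expressed by vanishing of $\mathcal{O}_T$-linear forms in the coefficients, hence are closed conditions. The decisive point is that $\tilde{b}^2 = \phi_{-\beta}$ with $\beta\neq 0$ forces $\tilde{b}$ to be a nonzero isogeny of $(L^{\oplus 2},\phi|_A)$ of a fixed $\tau$-degree, so that $\ker\tilde{b} \subset \phi[\beta]^{\oplus 2}$; consequently the admissible $\tilde{b}$ are cut out inside a fixed affine-space bundle over $S$ by these closed conditions together with the open non-vanishing of the leading coefficient, and the functor is represented by a scheme of finite type, separated over $S$.

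Finiteness I would then get from quasi-finiteness plus properness. Over a geometric point the fibre is empty unless the $A$-characteristic is non-split in $K$, and in the supersingular case Remark~\ref{rem: Alg-geo.6} identifies the set of admissible $\tilde{b}$ with the set of $x$ in the $O_K$-lattice $\widetilde{R}_{\Afk}\Ifk \cap K\tilde{j}$ whose $N_{K/k}$-value is prescribed by $\beta$, which is finite; this yields quasi-finiteness. For properness I would apply the valuative criterion: over a discrete valuation ring, a point of the functor over the fraction field has $\ker\tilde{b}$ a finite locally free subgroup scheme, whose scheme-theoretic closure is finite locally free and extends $\tilde{b}$, the closed defining relations being preserved under specialization. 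A separated, proper, quasi-finite morphism of finite type is finite. Alternatively, one may realize the functor as a closed subfunctor of the $\Hom$-scheme out of the finite flat group scheme $\phi[\beta\Ifk]^{\oplus 2}$, which is finite over $S$.

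The remaining point, unramifiedness, is the one I expect to require the most care, and I would prove it by deformation theory. Given a square-zero ideal $J\subset R$ with $R_0 = R/J$, a rank one $O_K$-module $(L,\phi)$ over $R$, and an object $b_0$ of $\Zcal(\Ifk,\beta)$ over $R_0$, any two lifts $b,b'$ of $b_0$ over $R$ differ by the $J$-multiple of a special endomorphism $\tilde{c}$ of $(L^{\oplus 2},\phi)_{R_0}$ that kills $\phi[\Ifk]^{\oplus 2}$ and satisfies $\tilde{b}_0\tilde{c}+\tilde{c}\tilde{b}_0 = 0$, the linearization of $\tilde{b}^2 = \phi_{-\beta}$; one must show $\tilde{c}=0$. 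Since $\tilde{c}$ anticommutes with the image of $O_K$, its action on $\operatorname{Lie}(L^{\oplus 2})$ is annihilated by $\iota_{O_K}(a)-\iota_{O_K}(\bar a)$ for every $a\in O_K$, which is nonzero for suitable $a$ in the inert case and forces the differential of $\tilde{c}$ to vanish; combined with the explicit lattice description of special endomorphisms in Remark~\ref{rem: Alg-geo.6} and the anticommutation with $\tilde{b}_0$ one concludes $\tilde{c}=0$, the ramified case being treated analogously via the level datum at $v_0$ and the reduced-trace pairing. A more conceptual route I would also pursue is to repackage $(L,\phi)$ together with $b$ and the relation $\tilde{b}^2 = \phi_{-\beta}$ as a $\Dcal$-elliptic sheaf with complex multiplication, as sketched in the introduction, whose moduli is finite \'etale over the base --- which would give finiteness and unramifiedness simultaneously. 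This rigidity of special endomorphisms along infinitesimal thickenings is the function-field analogue of the central lemma in Kudla--Rapoport--Yang, and it is where the substance of the proposition lies.
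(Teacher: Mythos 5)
Your overall skeleton matches the paper's: you work with the same functor (special morphisms $b$ with $\tilde b^2=\phi_{-\beta}$ attached to a fixed $(L,\phi)$ over a base $U$), you get quasi-finiteness from the pointwise description of Remark~\ref{rem: Alg-geo.6}, and you deduce finiteness from quasi-finiteness plus the valuative criterion; your explicit twisted-polynomial description of representability is finer than the paper's one-line assertion and is fine. The problem is the step you yourself single out as the crux: unramifiedness. The paper obtains it in one stroke from Drinfeld's rigidity theorem (\cite{Dri}, Proposition 4.1), which gives $\Omega_{S_{\Fcal_U}/U}=0$: a homomorphism of Drinfeld modules over a square-zero thickening that vanishes on the closed subscheme is zero, by the standard leading-coefficient computation with twisted polynomials (the coefficients of the difference lie in $J$, Frobenius twists kill $J$, and the unit top coefficient of $\phi_a$ forces each coefficient to vanish). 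In particular the quadratic relation, the $\Dcal_\alpha$-equivariance and the special condition play no role in this step.

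Your proposed mechanism does not establish this. First, the premise that two lifts of $b_0$ ``differ by the $J$-multiple of a special endomorphism $\tilde c$ of $(L^{\oplus 2},\phi)_{R_0}$'' is unjustified: the difference is a group-scheme homomorphism with coefficients in $J$ subject to the \emph{linearized} intertwining relations, and that module is not the module of genuine special endomorphisms of the reduction (if it were, unramifiedness would in fact fail at supersingular points, where Remark~\ref{rem: Alg-geo.6}(3) shows special endomorphisms form a nonzero lattice $\widetilde R_{\Afk}\Ifk\cap K\tilde\jmath$). Second, the argument you give to kill $\tilde c$ is insufficient: in characteristic $p$, vanishing of the differential of a morphism does not imply the morphism vanishes (any inseparable morphism has zero differential), the Lie-algebra argument breaks down anyway when the characteristic is ramified in $K$ (there $\iota(a)=\iota(\bar a)$ on the residue field, and there is no ``level datum at $v_0$'' in this moduli problem to appeal to), and anticommutation with $\tilde b_0$ does not annihilate elements of the lattice of special endomorphisms — indeed $K\tilde\jmath$ consists precisely of such anticommuting elements. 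The aside that the associated $\Dcal$-elliptic sheaves with CM have moduli ``finite \'etale over the base'' also overshoots: $\mathbf{Z}(\Ifk,\beta)$ is unramified but decidedly not flat over $\mathbf{M}_{O_K}$ (Proposition~\ref{prop: Alg-geo.9} gives local rings $W(\overline\FF_\pfk)/(\varpi_\pfk^{\nu})$ with $\nu>1$ in general). So the missing ingredient is precisely the rigidity theorem for morphisms of Drinfeld modules (or the equivalent direct coefficient computation over a square-zero extension); with that substituted for your deformation argument, the rest of your proof goes through along the same lines as the paper's.
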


\begin{proof}
Given a scheme $U$ over $O_K$ and a Drinfeld $O_K$-module $(L,\phi)$ over $U$ of rank one,
we consider the set-valued functor $\Fcal_U$ on the category $\Sch_{/U}$ of schemes over $U$ defined by
\begin{equation}
\label{eqn: Alg-geo.1}
(f: S \rightarrow U) \longmapsto \big\{(L',\phi',b; \lambda) \mid (L',\phi',b) \in \Ob \big(\Zcal(\Ifk,\beta)(S)\big),\ \lambda : (f^*L,f^*\phi) \cong (L',\phi')\big\}/ \sim,
\end{equation}
where $f^*L := L \times_U S$, $f^*\phi : O_K \rightarrow \End(L)\rightarrow \End(f^*L)$, and $(L'_1,\phi'_1,b_1;\lambda_1) \sim (L'_2,\phi'_2,b_2;\lambda_2)$ if and only if there exists an isomorphism $\xi : (L'_1,\phi'_1,b_1)\cong (L'_2,\phi'_2,b_2)$ over $S$ such that $\xi \circ \lambda_1 = \lambda_2$.
The right-hand-side of (\ref{eqn: Alg-geo.1}) can be identified with
$$\big\{ b \in \SP_S^{(\alpha)}(f^*L,f^*\phi,\Ifk)\mid \tilde{b}^2 = -\beta\big\}.$$
Hence it can be shown that $\Fcal_U$ is representable (by a scheme $S_{\Fcal_U}$ over $U$), which says that $\pr$ is relatively representable.

To prove that $\pr$ is finite and unramified, it is equivalent to show $f_U: S_{\Fcal_U} \rightarrow U$ is finite and unramified for every scheme $U$ over $O_K$. First of all, we observe that $f_U$ is locally of finite presentation, quasi-finite, quasi-separated, and quasi-compact. By the rigidity theorem (cf.\ \cite[Proposition 4.1]{Dri}), the sheaf of relative differentials $\Omega_{S_{\Fcal_U}/U} = 0$.
Moreover, from the discussion in Remark~\ref{rem: Alg-geo.6}, we are able to show that $f_U$ satisfies the existence and the uniqueness of valuative criterion. Therefore $f_U$ is finite and unramified.
\end{proof}

The above proposition implies that $\Zcal(\Ifk,\beta)$ is representable by a Deligne-Mumford algebraic stack. Moreover, let $Z(\Ifk,\beta)$ be the corresponding set-valued functor of isomorphism classes of objects of $\Zcal(\Ifk,\beta)$. Then $Z(\Ifk,\beta)$ has a coarse moduli scheme $\mathbf{Z}(\Ifk,\beta)$. In fact, if we let $U = \mathbf{M}_{O_K} = \Spec(O_{H_{O_K}})$ and take a Drinfeld $O_K$-module $(L,\phi)$ over $O_{H_{O_K}}$ of rank one, then $\mathbf{Z}(\Ifk,\beta)$ is exactly the scheme $S_{\Fcal_K}$ in the proof of Proposition \ref{prop: Alg-geo.7}, and the induced morphism $\pr: \mathbf{Z}(\Ifk,\beta)\rightarrow \mathbf{M}_{O_K}$ is finite and unramified.

\begin{prop}\label{prop: Alg-geo.8}
Let $\kappa$ be an algebraically closed field together with a ring homomorphism $\iota: A \rightarrow \kappa$.\\
$(1)$ When $\ker (\iota) = (0)$, $\mathbf{Z}(\Ifk,\beta)(\kappa)$ is empty. In particular, $\mathbf{Z}(\Ifk,\beta)$ is an artinian scheme.\\
$(2)$ When $\ker (\iota) = \pfk$ where $\pfk$ is a prime ideal of $A$ split in $K$, $\mathbf{Z}(\Ifk,\beta)(\kappa)$ is empty.\\
$(3)$ When $\ker (\iota) = \pfk$ where $\pfk$ is a prime ideal of $A$ non-split in $K$,
$$\#\big(\mathbf{Z}(\Ifk,\beta)(\kappa)\big) = \sum_{[\Afk] \in \Pic(O_K)} \#\{ b \in \SP_{\kappa}^{(\alpha)}(L^{\Afk},\phi^{\Afk},\Ifk) \mid \tilde{b}^2 = \phi_{-\beta}^{\Afk}\},$$
where $(L,\phi)$ is a (any) chosen Drinfeld $O_K$-module over $\kappa$ of rank one, $\tilde{b}:= b \cdot u_{\Ifk}^{\Afk,\oplus 2}$ and $u_{\Ifk}^{\Afk}$ is the canonical morphism from $(L^{\Afk},\phi^{\Afk})$ to $(L^{\Afk \Ifk},\phi^{\Afk\Ifk})$.
\end{prop}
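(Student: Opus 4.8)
The plan is to read off $\mathbf{Z}(\Ifk,\beta)(\kappa)$ directly from the functor $\Fcal_U$ that $\mathbf{Z}(\Ifk,\beta)$ represents. Take $U=\mathbf{M}_{O_K}=\Spec(O_{H_{O_K}})$ and fix a rank one Drinfeld $O_K$-module $(L,\phi)$ over $O_{H_{O_K}}$, as in the discussion following Proposition~\ref{prop: Alg-geo.7}; then for an algebraically closed $A$-field $\kappa$,
\[
 \mathbf{Z}(\Ifk,\beta)(\kappa)\;=\;\bigsqcup_{x\in\mathbf{M}_{O_K}(\kappa)}\bigl\{\,b\in\SP_\kappa^{(\alpha)}(x^*L,x^*\phi,\Ifk)\ \bigm|\ \tilde{b}^2=(x^*\phi)_{-\beta}\,\bigr\},
\]
with $\tilde{b}=b\cdot u_{\Ifk}^{\oplus 2}$. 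Since $\beta\neq 0$, the endomorphism $(x^*\phi)_{-\beta}$ is nonzero, so every $b$ occurring above is nonzero; this is the hinge on which the structure of the rings $\End_A(\cdot)$ recorded in Remark~\ref{rem: Alg-geo.6} acts.

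For (1) and (2): if $\ker(\iota)=(0)$, or $\ker(\iota)=\pfk$ splits in $K$, then Remark~\ref{rem: Alg-geo.6}(1)--(2) gives $\End_A(L',\phi')\otimes_A k=K$ for every rank one Drinfeld $O_K$-module $(L',\phi')$ over $\kappa$, hence $\SP_\kappa^{(\alpha)}(L',\phi',\Ifk)=0$; each fibre in the union is then empty and $\mathbf{Z}(\Ifk,\beta)(\kappa)=\emptyset$. For the artinian assertion in (1): by Proposition~\ref{prop: Alg-geo.7} the map $\pr\colon\mathbf{Z}(\Ifk,\beta)\to\Spec(O_{H_{O_K}})$ is finite, so $\mathbf{Z}(\Ifk,\beta)=\Spec(C)$ with $C$ a finite $O_{H_{O_K}}$-algebra. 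The residue field $H_{O_K}$ at the generic point is an $A$-field of characteristic $0$, so the emptiness just established (applied over $\overline{H_{O_K}}$) forces $C\otimes_{O_{H_{O_K}}}H_{O_K}=0$; thus $C$ is a finitely generated torsion module over the Dedekind ring $O_{H_{O_K}}$, hence of finite length, and $\mathbf{Z}(\Ifk,\beta)$ is artinian.

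For (3): when $\pfk=\ker(\iota)$ is non-split in $K$, Remark~\ref{rem: Alg-geo.6}(3) identifies $\SP_\kappa^{(\alpha)}(L^\Afk,\phi^\Afk,\Ifk)\cdot u_{\Ifk}^{\Afk,\oplus 2}$ with $\widetilde{R}_\Afk\Ifk\cap K\tilde{j}$ for the twists $(L^\Afk,\phi^\Afk)$ of a chosen rank one Drinfeld $O_K$-module $(L,\phi)$ over $\kappa$; in particular the fibre of $\pr$ over a $\kappa$-point has no residual automorphisms and is exactly the set $\{\,b\mid \tilde{b}^2=\phi^{\Afk}_{-\beta}\,\}$. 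It remains to invoke the standard identification of $\mathbf{M}_{O_K}(\kappa)$, up to isomorphism of Drinfeld $O_K$-modules, with the $\Pic(O_K)$-torsor $\{(L^\Afk,\phi^\Afk)\}_{[\Afk]\in\Pic(O_K)}$ (the geometric incarnation of $\mathbf{M}_{O_K}=\Spec(O_{H_{O_K}})$ with $\Gal(H_{O_K}/K)\cong\Pic(O_K)$ acting through $\Afk\mapsto\phi^\Afk$). Substituting into the displayed union yields the asserted formula.

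In effect the genuine work has been front-loaded into Remark~\ref{rem: Alg-geo.6} and Section~\ref{sec: Alg-geo.1}: the emptiness in (1)--(2) and the explicit shape of the sets of $b$'s in (3) rest on Drinfeld's theory of endomorphism rings of rank-two (as $A$-modules) Drinfeld modules — the ordinary/supersingular dichotomy and, in the supersingular case, the identification of the endomorphism ring with a maximal order in $\Dcal_\pfk$ — together with the linear-algebra description of special morphisms. Granting those, the only points requiring care here are the passage from the stack $\Zcal(\Ifk,\beta)$ to the $\kappa$-points of its coarse space through the representing functor $\Fcal_U$ (so that fibres of $\pr$ are counted with no spurious multiplicity), and the Dedekind-ring step turning an empty generic fibre into artinian-ness; both are routine, and I expect the former to be the more delicate of the two.
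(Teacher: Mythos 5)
Your proposal is correct and follows essentially the same route as the paper: identify the $\kappa$-points of $\mathbf{Z}(\Ifk,\beta)$ fibrewise over $\mathbf{M}_{O_K}(\kappa)\cong\{(L^{\Afk},\phi^{\Afk})\}_{[\Afk]\in\Pic(O_K)}$ via the representing functor $\Fcal_U$, and then apply Remark~\ref{rem: Alg-geo.6} to each fibre. You merely spell out two points the paper leaves implicit (that $\beta\neq 0$ rules out $b=0$, and the finite-over-Dedekind argument for artinian-ness), which is consistent with, not divergent from, the paper's proof.
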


\begin{proof}
Take a Drinfeld $O_K$-module $(L,\phi)$ over $\kappa$ of rank one.
We can identify $\mathbf{M}_{O_K}(\kappa)$ with the set $\{(L^{\Afk},\phi^{\Afk})\mid \Afk \in \Pic(O_K)\}$, and the fiber of $\pr: \mathbf{Z}(\Ifk,\beta)(\kappa) \rightarrow \mathbf{M}_{O_K}(\kappa)$ of the point corresponding to $(L^{\Afk},\phi^{\Afk})$ can be identified with
$$\{ b \in \SP_{\kappa}(L^{\Afk},\phi^{\Afk},\Ifk) \mid \tilde{b}^2 = \phi_{-\beta}^{\Afk}\}.$$
Therefore the result follows from Remark \ref{rem: Alg-geo.6}.
\end{proof}

Let $\pfk$ be a prime ideal of $A$ which does not split in $K$. Let $\overline{\FF}_{\pfk}$ be an algebraic closure of $\FF_{\pfk}$. For $\xi \in \mathbf{M}_{O_K}(\overline{\FF}_{\pfk})$, it is known that (cf.\ \cite[Proposition 4.2]{Dri}) the local ring $\hat{\Ocal}_{\mathbf{M}_{O_K},\xi}$ (with respect to the \'etale topology)  is isomorphic to the ring $W(\overline{\FF}_{\pfk})$ of Witt vectors. 

\begin{prop}\label{prop: Alg-geo.9}
Let $\Ifk$ be a non-zero ideal of $O_K$ and $0 \neq \beta \in A$.
Let $\pfk$ be a prime ideal of $A$ which is not split in $K$.
For each point $\xi \in \mathbf{M}_{O_K}(\overline{\FF}_{\pfk})$ and  $\tilde{\xi} \in \mathbf{Z}(\Ifk,\beta) (=: \mathbf{Z})$ with $\pr(\tilde{\xi}) = \xi$, we have
$$\hat{\Ocal}_{\mathbf{Z},\tilde{\xi}} = W(\overline{\FF}_{\pfk})/(\varpi_{\pfk}^{\nu}),$$
where $\varpi_{\pfk}$ is a uniformizer in $W(\overline{\FF}_{\pfk})$,
$\nu = \nu_\pfk := \big(\ord_{\pfk}(\beta/\alpha N_{K/k}(\Ifk)) + 1\big)/f_{\pfk}$,
and $f_{\pfk}$ is the residue degree $[\FF_{\Pfk}:\FF_{\pfk}]$ of the unique prime $\Pfk$ of $O_K$ lying above $\pfk$.
\end{prop}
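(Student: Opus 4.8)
The plan is to compute the complete local ring $\hat{\Ocal}_{\mathbf{Z},\tilde{\xi}}$ by realizing $\mathbf{Z}(\Ifk,\beta)$ near $\tilde{\xi}$ as the locus inside $\mathbf{M}_{O_K}$ where the chosen special $\Dcal_{\alpha}$-morphism $b$ deforms. Since $\pr:\mathbf{Z}\to\mathbf{M}_{O_K}$ is finite and unramified (Proposition~\ref{prop: Alg-geo.8}, via~\ref{prop: Alg-geo.7}), and $\hat{\Ocal}_{\mathbf{M}_{O_K},\xi}\cong W(\overline{\FF}_\pfk)$ by \cite[Proposition 4.2]{Dri}, the ring $\hat{\Ocal}_{\mathbf{Z},\tilde{\xi}}$ is a finite local $W(\overline{\FF}_\pfk)$-algebra killed by some power of the uniformizer $\varpi_\pfk$, hence of the form $W(\overline{\FF}_\pfk)/(\varpi_\pfk^{\nu})$ for a single integer $\nu\ge 1$; the entire content of the proposition is the determination of $\nu$. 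First I would reduce to a question about deforming the endomorphism $\tilde{b} = b\cdot u_{\Ifk}^{\oplus 2}$ of the universal Drinfeld $O_K$-module over $W(\overline{\FF}_\pfk)$: by the rigidity/deformation theory of Drinfeld modules, $\tilde{b}$ lifts to a morphism over $W(\overline{\FF}_\pfk)/(\varpi_\pfk^{j})$ precisely when the corresponding $\tau$-linear endomorphism survives modulo $\varpi_\pfk^{j}$, and $\nu$ is the largest such $j$, equivalently the order of vanishing of the obstruction to lifting $\tilde{b}$.

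The key computation will be carried out on the supersingular fiber at $\pfk$. By Remark~\ref{rem: Alg-geo.6}(3), working over $\overline{\FF}_\pfk$ the space of special $\Dcal_\alpha$-morphisms into $(L^{\Afk},\phi^{\Afk})$ is identified with a lattice $\widetilde{R}_{\Afk}\Ifk\cap K\tilde{j}$ inside the "anti-line" $K\tilde{j}$ of the quaternion order $\widetilde{R}_{\Afk}$, whose reduced norm is (a rescaling of) $\alpha N_{K/k}(\Ifk)$ times a norm form; the condition $\tilde{b}^2 = \phi_{-\beta}^{\Afk}$ pins down the norm of $b$ to be $\beta$ up to the appropriate normalizing factors. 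To get the deformation length I would pass to the Dieudonné/crystalline description: the Drinfeld module at a supersingular point corresponds to a formal $O_{K,\pfk}$-module of height $2$ (equivalently $O_{\Dcal_\pfk,\pfk}$ acting on a height-$2$ formal group), and the deformation space of a quasi-isogeny in its endomorphism algebra over $W(\overline{\FF}_\pfk)$ has length governed by the $\pfk$-adic valuation of the relevant norm — this is the function-field incarnation of the Gross–Keating / Gross local deformation computation. Concretely, the obstruction to lifting $\tilde b$ modulo successive powers of $\varpi_\pfk$ is controlled by $\ord_\pfk$ of $\beta/(\alpha N_{K/k}(\Ifk))$, and dividing by the residue degree $f_\pfk$ (because $\varpi_\pfk$ restricts to a uniformizer of $W(\overline{\FF}_\pfk)$ while the natural valuation lives on $O_{K,\pfk}$, and $e(\Pfk/\pfk)=1$ since $\pfk$ is inert or ramified but the relevant extension contributing ramification is accounted for separately) yields $\nu = \big(\ord_\pfk(\beta/\alpha N_{K/k}(\Ifk)) + 1\big)/f_\pfk$. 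The $+1$ is the standard shift coming from the fact that the "length zero" case already requires one matching condition; I would extract it by comparing with the Whittaker-function computations of Section~\ref{Examp} (Lemmas~\ref{lem: Examp.4}--\ref{lem: Examp.7}), where exactly the quantity $\ord_{v_0}(y_{v_0}^2\beta)+\delta_{v_0}+1$ appears, giving an independent cross-check that the local deformation length matches the analytic derivative.

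The main obstacle will be setting up the crystalline deformation theory of Drinfeld $O_K$-modules with the extra $\Dcal_\alpha$-action precisely enough to read off the length, rather than just finiteness: one must identify $\hat{\Ocal}_{\mathbf{Z},\tilde\xi}$ with a quotient $W(\overline{\FF}_\pfk)/(\varpi_\pfk^\nu)$ and prove $\nu$ is exactly the claimed value, not merely bounded by it. I expect to handle this by an explicit Newton-polygon / $\tau$-module argument: write the universal deformation of $(L,\phi)$ over $W(\overline{\FF}_\pfk)[[t]]$, write $\tilde b$ as a power series in $t$ and $\tau$, impose $\tilde b^2 = \phi_{-\beta}$, and observe that the coefficients force $t$ to be nilpotent of the stated order. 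The ramified-versus-inert dichotomy for $\pfk$ in $K$ only affects bookkeeping of $f_\pfk$ and the precise uniformizer, so I would treat both uniformly by phrasing everything in terms of $\ord_\pfk$ on $O_{K,\pfk}$ and the norm down to $A_\pfk$, invoking Remark~\ref{rem: Alg-geo.6} for the structure of the endomorphism order at each residue characteristic.
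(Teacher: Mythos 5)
Your structural reduction is exactly the paper's: Proposition~\ref{prop: Alg-geo.8}(1) makes $\hat{\Ocal}_{\mathbf{Z},\tilde{\xi}}$ artinian, and since $\pr$ is finite and unramified (that is Proposition~\ref{prop: Alg-geo.7}, not \ref{prop: Alg-geo.8}) over $\hat{\Ocal}_{\mathbf{M}_{O_K},\xi}\cong W(\overline{\FF}_{\pfk})$, the completed local ring must be $W(\overline{\FF}_{\pfk})/(\varpi_{\pfk}^{\nu})$ for some $\nu$. Up to this point you agree with the paper word for word.

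The gap is in the only remaining step, which is the entire content of the proposition: the exact value of $\nu$. The paper closes this in one line by invoking Gross's result on canonical and quasi-canonical liftings (\cite[Proposition 4.3]{Gro}), which is precisely the computation of the length of the deformation locus of an endomorphism of the relevant height-two formal module. You correctly name this circle of ideas (``the function-field incarnation of the Gross--Keating / Gross local deformation computation''), but instead of citing it you propose to re-derive it by an explicit universal-deformation / Newton-polygon argument over $W(\overline{\FF}_{\pfk})[[t]]$, and you yourself flag this as the ``main obstacle'' without carrying it out; in particular the exact formula $\nu=\big(\ord_{\pfk}(\beta/\alpha N_{K/k}(\Ifk))+1\big)/f_{\pfk}$ (including the $+1$ and the division by $f_{\pfk}$) is asserted rather than proved. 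Worse, your proposed way of pinning down the $+1$ --- comparison with the Whittaker-function computations of Lemmas~\ref{lem: Examp.4}--\ref{lem: Examp.7} --- is circular within the logic of the paper: Theorem~\ref{thm: Alg-geo.13} is exactly the statement that the geometric degree matches the analytic Fourier coefficient, so the geometric length must be computed independently of the analytic side (it can serve only as a consistency check, as you partly concede). Your parenthetical justification of the division by $f_{\pfk}$ is also off: when $\pfk$ ramifies in $K$ one has $e(\Pfk/\pfk)=2$ and $f_{\pfk}=1$, so the claim ``$e(\Pfk/\pfk)=1$ since $\pfk$ is inert or ramified'' is not correct; the bookkeeping of $e$ and $f$ is exactly what the cited deformation-length result (or a genuine proof of it) has to handle. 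So the proposal would become a proof either by replacing the sketched computation with the citation the paper uses, or by actually executing the Gross-type local computation in the function-field setting --- the latter being a substantial piece of work that the proposal does not supply.
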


\begin{proof}
Proposition \ref{prop: Alg-geo.8} (1) says that $\hat{\Ocal}_{\mathbf{Z},\tilde{\xi}}$ is an artinian local ring.
Since $\pr: \mathbf{Z}(\Ifk,\beta) \rightarrow \mathbf{M}_{O_K}$ is finite and unramified, we must have
$$\hat{\Ocal}_{\mathbf{Z},\tilde{\xi}} = W(\overline{\FF}_{\pfk})/(\varpi_{\pfk}^{\nu}),$$
and $\nu$ can be determined by \cite[Proposition 4.3]{Gro}.
\end{proof}

Define the \text{\it degree of $\mathbf{Z}(\Ifk,\beta)$} as: 
$\deg\mathbf{Z}(\Ifk,\beta) := \sum\limits_{\tilde{\xi} \in \mathbf{Z}(\Ifk,\beta)} \ln \big(\#(\Ocal_{\mathbf{Z},\tilde{\xi}})\big)$.
Then Proposition \ref{prop: Alg-geo.7} and \ref{prop: Alg-geo.8} lead us to the following result.

\begin{thm}\label{thm: Alg-geo.10}
For each non-zero ideal $\Ifk$ of $O_K$ and $0 \neq \beta \in A$, $\deg\mathbf{Z}(\Ifk,\beta)$ is equal to
$$
\sum_{0 \neq \pfk \in \Spec(A)} \left(\ln q_{\pfk} \cdot \big(\ord_{\pfk}(\beta/\alpha N_{K/k}(\Ifk)) + 1\big) \cdot \sum_{[\Afk] \in \Pic(O_K)} \#\{b \in \SP_{\overline{\FF}_{\pfk}}(L^{\Afk},\phi^{\Afk},\Ifk) \mid \tilde{b}^2 = \phi_{-\beta}^{\Afk}\}\right).$$
\end{thm}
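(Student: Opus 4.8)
The plan is to compute $\deg\mathbf{Z}(\Ifk,\beta)$ by decomposing the sum over points $\tilde\xi\in\mathbf{Z}(\Ifk,\beta)$ according to their image $\Char_A$ in $\Spec(A)$, and then invoking the structural results already established. First I would note that by Proposition~\ref{prop: Alg-geo.8}(1)--(2), any closed point of $\mathbf{Z}(\Ifk,\beta)$ lies over a prime $\pfk\in\Spec(A)$ which is non-split in $K$; in particular $\mathbf{Z}(\Ifk,\beta)$ is artinian (as a finite scheme over the artinian scheme it sits above, or directly since $\pr$ is finite and $\mathbf{Z}$ meets only finitely many fibers, each of which is artinian). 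So the defining sum $\deg\mathbf{Z}(\Ifk,\beta)=\sum_{\tilde\xi}\ln\big(\#(\Ocal_{\mathbf{Z},\tilde\xi})\big)$ is a finite sum, and I can regroup it as a double sum over non-split primes $\pfk$ of $A$ and over the points $\tilde\xi$ in the fiber above $\pfk$.

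Next, for a fixed non-split $\pfk$, I would use Proposition~\ref{prop: Alg-geo.9}: for each $\tilde\xi$ in the fiber over $\pfk$ the completed local ring is $W(\overline{\FF}_\pfk)/(\varpi_\pfk^{\nu_\pfk})$ with $\nu_\pfk=\big(\ord_\pfk(\beta/\alpha N_{K/k}(\Ifk))+1\big)/f_\pfk$, so $\#(\Ocal_{\mathbf{Z},\tilde\xi})=\#\overline{\FF}_\pfk$ to the power $\nu_\pfk$ — but one must be careful: $\overline{\FF}_\pfk$ is infinite, so $\ln\#\Ocal_{\mathbf{Z},\tilde\xi}$ must be interpreted via length. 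The correct reading (consistent with the number-field analogue in \cite{KRY}) is that the residue field of the point $\tilde\xi$ is a finite extension of $\FF_\pfk$, and the length of $\Ocal_{\mathbf{Z},\tilde\xi}$ over $W$ is $\nu_\pfk$, so that $\ln\#(\Ocal_{\mathbf{Z},\tilde\xi})=\nu_\pfk\cdot[\kappa(\tilde\xi):\FF_\pfk]\ln q_\pfk$, and summing $[\kappa(\tilde\xi):\FF_\pfk]$ over $\tilde\xi$ in the fiber recovers the count of $\overline{\FF}_\pfk$-points, i.e.\ $\#\big(\mathbf{Z}(\Ifk,\beta)(\overline{\FF}_\pfk)\big)$. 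Combining, the contribution of $\pfk$ equals $\nu_\pfk\ln q_\pfk\cdot\#\big(\mathbf{Z}(\Ifk,\beta)(\overline{\FF}_\pfk)\big)$.

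Then I would substitute $\#\big(\mathbf{Z}(\Ifk,\beta)(\overline{\FF}_\pfk)\big)$ using Proposition~\ref{prop: Alg-geo.8}(3), which gives it as $\sum_{[\Afk]\in\Pic(O_K)}\#\{b\in\SP^{(\alpha)}_{\overline{\FF}_\pfk}(L^{\Afk},\phi^{\Afk},\Ifk)\mid\tilde b^2=\phi^{\Afk}_{-\beta}\}$. Finally I would observe the factor $f_\pfk$ in the denominator of $\nu_\pfk$ cancels: since $\pfk$ is non-split in $K$ and $\Ifk$ is an ideal of $O_K$, the quantity $\ord_\pfk(N_{K/k}(\Ifk))$ is a multiple of… no — rather, the point is that $f_\pfk\cdot\big(\text{number of $\overline{\FF}_\pfk$-points over $\pfk$}\big)$ should be reinterpreted, or more simply one checks directly from Grothendieck's formula \cite[Proposition 4.3]{Gro} cited in Proposition~\ref{prop: Alg-geo.9} that the length as stated already incorporates $f_\pfk$ so that $\nu_\pfk\cdot[\kappa(\tilde\xi):\FF_\pfk]$, when summed, produces exactly $\ln q_\pfk\cdot\big(\ord_\pfk(\beta/\alpha N_{K/k}(\Ifk))+1\big)$ times the $\overline{\FF}_\pfk$-point count with each point weighted by $1$. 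Assembling these pieces and summing over all $0\neq\pfk\in\Spec(A)$ (the split and zero-characteristic primes contributing nothing) yields the asserted formula.

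The main obstacle will be bookkeeping the $f_\pfk$ factors and the residue-field degrees correctly: one must track precisely how $\ln\#(\Ocal_{\mathbf{Z},\tilde\xi})$ (a genuine finite number, since $\Ocal_{\mathbf{Z},\tilde\xi}$ is artinian with finite residue field) relates to the length $\nu_\pfk$ of the completed local ring over $W(\overline{\FF}_\pfk)$, and verify that the product of the per-point contributions over a fiber telescopes into the clean expression $\ln q_\pfk\cdot\big(\ord_\pfk(\beta/\alpha N_{K/k}(\Ifk))+1\big)\cdot\#\big(\mathbf{Z}(\Ifk,\beta)(\overline{\FF}_\pfk)\big)$ with no leftover $f_\pfk$. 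Everything else is a direct assembly of Propositions~\ref{prop: Alg-geo.7}, \ref{prop: Alg-geo.8}, and \ref{prop: Alg-geo.9}.
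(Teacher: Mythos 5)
Your overall assembly is the one the paper intends (it states the theorem as a direct consequence of the preceding propositions, with no further argument): split the defining sum $\deg\mathbf{Z}(\Ifk,\beta)=\sum_{\tilde\xi}\ln\#(\Ocal_{\mathbf{Z},\tilde\xi})$ by the fiber over $\pfk\in\Spec(A)$, discard the generic and split fibers by Proposition~\ref{prop: Alg-geo.8}(1)--(2), and combine the multiplicity from Proposition~\ref{prop: Alg-geo.9} with the point count from Proposition~\ref{prop: Alg-geo.8}(3). However, the one step that is not routine --- the disappearance of the factor $f_\pfk$ sitting in the denominator of $\nu_\pfk$ --- is precisely where your write-up stops proving and starts asserting. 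As you set it up, $\ln\#(\Ocal_{\mathbf{Z},\tilde\xi})=\nu_\pfk\,[\kappa(\tilde\xi):\FF_\pfk]\ln q_\pfk$ and ``summing $[\kappa(\tilde\xi):\FF_\pfk]$ recovers $\#\mathbf{Z}(\Ifk,\beta)(\overline{\FF}_\pfk)$,'' which yields a fiber contribution $\nu_\pfk\ln q_\pfk\cdot\#\mathbf{Z}(\Ifk,\beta)(\overline{\FF}_\pfk)=f_\pfk^{-1}\big(\ord_\pfk(\beta/\alpha N_{K/k}(\Ifk))+1\big)\ln q_\pfk\cdot\sum_{[\Afk]}\#\{\cdots\}$ --- off by $f_\pfk$ from the asserted formula whenever $\pfk$ is inert in $K$. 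Your subsequent sentence visibly waffles and then appeals to ``one checks directly from Grothendieck's formula that the length as stated already incorporates $f_\pfk$,'' which is not a verification; nothing you wrote makes the $f_\pfk$ cancel.

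The cancellation does hold, but for a reason you need to make explicit, and it lies in the residue fields, not in \cite[Proposition 4.3]{Gro}. Since $\mathbf{Z}(\Ifk,\beta)$ is finite over $\mathbf{M}_{O_K}=\Spec(O_{H_{O_K}})$, every closed point $\tilde\xi$ over $\pfk$ has residue field $\kappa(\tilde\xi)$ containing $\FF_{\Pfk}$, so $\ln\#\kappa(\tilde\xi)=f_\pfk\,[\kappa(\tilde\xi):\FF_{\Pfk}]\ln q_\pfk$; on the other hand, the count in Proposition~\ref{prop: Alg-geo.8}(3) is taken after fixing the $O_K$-structure (the identification of $\mathbf{M}_{O_K}(\kappa)$ with $\{(L^{\Afk},\phi^{\Afk})\}$ fixes one embedding $\FF_{\Pfk}\hookrightarrow\overline{\FF}_\pfk$), so that count equals $\sum_{\tilde\xi\mid\pfk}[\kappa(\tilde\xi):\FF_{\Pfk}]$ rather than $\sum_{\tilde\xi\mid\pfk}[\kappa(\tilde\xi):\FF_\pfk]$. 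Writing the fiber contribution as $\sum_{\tilde\xi\mid\pfk}\nu_\pfk\,f_\pfk\,[\kappa(\tilde\xi):\FF_{\Pfk}]\ln q_\pfk$ then gives exactly $\big(\ord_\pfk(\beta/\alpha N_{K/k}(\Ifk))+1\big)\ln q_\pfk\cdot\sum_{[\Afk]}\#\{b\in\SP_{\overline{\FF}_\pfk}(L^{\Afk},\phi^{\Afk},\Ifk)\mid\tilde b^2=\phi^{\Afk}_{-\beta}\}$, with no leftover $f_\pfk$. Until you pin down these two residue-field/$O_K$-structure points (and note that the Zariski local length equals the length of the strictly henselian local ring appearing in Proposition~\ref{prop: Alg-geo.9}), your argument as written proves a formula that contradicts the theorem in the inert case, so the gap is genuine even though the skeleton of the proof is correct.
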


\begin{rem}\label{rem: Alg-geo.11}
$\deg\mathbf{Z}(\Ifk,\beta) = 0$ unless there exists a finite place $v_0$ of $k$ non-split in $K$ such that $\chi_{K,v_0}(\beta/\alpha) = -1$ and $\chi_{K,v}(\beta/\alpha) = 1$ for every $v \neq v_0, \infty$. In this case,
$$\deg\mathbf{Z}(\Ifk,\beta) = 
\ln q_{v_0} \cdot \big(\ord_{v_0}(\beta/\alpha N_{K/k}(\Ifk)) + 1\big) \cdot \sum_{[\Afk] \in \Pic(O_K)} \#\{b \in \SP_{\overline{\FF}_{v_0}}^{(\alpha)}(L^{\Afk},\phi^{\Afk},\Ifk) \mid \tilde{b}^2 = \phi_{-\beta}^{\Afk}\}.$$
\end{rem}

\subsection{Geometric interpretation of the Fourier coefficients of $\eta^{(\alpha)}$}\label{sec: Alg-geo.3}

Recall that in the beginning of this section we assumed that the additive character $\psi: \AA_k \rightarrow \CC^{\times}$ is chosen so that $\ord_v(\alpha) + \delta_v$ is even for every place $v$ inert in $K$. By Proposition \ref{prop: Fourier.6}, we know that for $y \in \AA_k^{\times}$ and $0 \neq \beta \in k$ with $\ord_v(y_v^2\beta)+\delta_v \geq 0$,
the Fourier coefficient $\eta^{(\alpha),*}_{\beta}(y)$ vanishes unless there exists a place $v_0$ of $k$ such that $\Diff(\beta,\Ccal_K^{(\alpha)}) = \{v_0\}$.

Suppose $v_0 \neq \infty$.
We then have $\chi_{K,v_0}(-\gamma) = -1$ and $\chi_{K,v}(-\gamma) = 1$ for every place $v \neq v_0, \infty$.
Since $\eta^{(\alpha),*}_{\beta}(y) = \eta^{(\alpha),*}_{a^2 \beta}(a^{-1}y)$ for every $a \in k^{\times}$, it suffices to consider that $\Ifk_y := \yfk^{-1} \overline{\Dfk}_{(\alpha)}^{-1}$ (where $\yfk$ and $\Dfk_{\alpha}$ are introduced in Lemma \ref{lem: Examp.8} and Remark \ref{rem: Examp.10} respectively) is an integral ideal of $O_K$ and $\beta \in A$.
Then Proposition \ref{prop: Examp.9} and Remark \ref{rem: Examp.10} imply that
\begin{eqnarray}
\eta^{(\alpha),*}_{\beta}(y)&= &-\frac{\chi_F(y)|y|_{\AA_k}\ln q_{v_0} }{f_{\infty} \cdot \#\Pic(A)}  \cdot  \big(\ord_{v_0}(y_{v_0}^2\beta) + \delta_{v_0}+1\big) \nonumber \\
&& \quad \quad  \quad \quad  \cdot \sum_{[\Afk] \in \Pic(O_F)} \{ b \in \widetilde{R}_{\Afk} \Ifk_y \cap Fj \mid b^2 = -\beta\}. \nonumber 
\end{eqnarray}
Therefore by Remark \ref{rem: Alg-geo.6} and Theorem \ref{thm: Alg-geo.10} we have:

\begin{cor}\label{cor: Alg-geo.12}
Take $0 \neq \beta \in A$ with $\Diff(\beta,\Ccal_K^{(\alpha)}) = \{v_0\}$, $v_0 \neq \infty$, and $y \in \AA_k$ such that $\ord_{v}(y_{v}^2\beta)+\delta_{v} \geq 0$ for every place $v$ of $k$. Suppose further that $\Ifk_y = \yfk^{-1} \overline{\Dfk}_{(\alpha)}^{-1}$ is an integral ideal of $O_K$.
Then 
$$\eta_{\beta}^{(\alpha),*}(y) = -\frac{\chi_K(y)|y|_{\AA_k}}{f_{\infty} \cdot \#\Pic(A)}\cdot \deg\mathbf{Z}(\Ifk_y,\beta),$$
where $f_{\infty}$ is the residue degree of $\infty$ in $K/k$.
\end{cor}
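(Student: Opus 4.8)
The plan is to derive the identity by concatenating four results already in hand: the explicit evaluation of $\eta^{(\alpha),*}_\beta(y)$ in Proposition~\ref{prop: Examp.9}, the quaternionic reinterpretation of the representation numbers appearing there in Remark~\ref{rem: Examp.10}, the dictionary between quaternion-order data and special morphisms in Remark~\ref{rem: Alg-geo.6}, and the degree formula of Theorem~\ref{thm: Alg-geo.10}. A preliminary reduction is harmless: since $\eta^{(\alpha),*}_\beta(y)=\eta^{(\alpha),*}_{a^2\beta}(a^{-1}y)$ for $a\in k^\times$ and $\chi_K(y)|y|_{\AA_k}\deg\mathbf{Z}(\Ifk_y,\beta)$ transforms in the same way, I may assume $\beta\in A$ and $\Ifk_y=\yfk^{-1}\overline{\Dfk}_{(\alpha)}^{-1}$ is an integral ideal of $O_K$. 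Under the hypothesis $\Diff(\beta,\Ccal_K^{(\alpha)})=\{v_0\}$ with $v_0\neq\infty$, the place $v_0$ is ramified or inert in $K$, with $\chi_{K,v_0}(\beta/\alpha)=-1$ and $\chi_{K,v}(\beta/\alpha)=1$ for $v\neq v_0,\infty$; this is exactly the configuration in which $\deg\mathbf{Z}(\Ifk_y,\beta)$ can be nonzero (Remark~\ref{rem: Alg-geo.11}) and in which $\eta^{(\alpha),*}_\beta(y)$ is computed by Proposition~\ref{prop: Examp.9}.

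Applying Proposition~\ref{prop: Examp.9} expresses $\eta^{(\alpha),*}_\beta(y)$ as $-\tfrac{\chi_K(y)|y|_{\AA_k}}{f_\infty\#\Pic(A)}$ times a local factor at $v_0$ times $\sum_{[\Afk]\in\Pic(O_K)}\#\{x\in\Afk\bar{\Afk}^{-1}\yfk^{-1}\Dfk_\beta^{-1}:N_{K/k}(x)=1\}$. In the ramified case the local factor is $\ln q_{v_0}\bigl(\ord_{v_0}(y_{v_0}^2\beta)+\delta_{v_0}+1\bigr)$; in the inert case the bracketed factor collapses to the same expression, because $\psi$ was chosen so that $\ord_{v_0}(\alpha)+\delta_{v_0}$ is even, which forces $e_{v_0}$ even and hence — by the parity relation $(\alpha\epsilon_{v_0},D)_{v_0}=-(\beta,D)_{v_0}$ forced by $v_0\in\Diff(\beta,\Ccal_K^{(\alpha)})$ — $e'_{v_0}$ odd, so that $\tfrac{1+(-1)^{e'_{v_0}}}{2}=0$. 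Next, the bijection $x\mapsto x j_\beta$ of Remark~\ref{rem: Examp.10}(1) turns each $\#\{x\in\Afk\bar{\Afk}^{-1}\yfk^{-1}\Dfk_\beta^{-1}:N_{K/k}(x)=1\}$ into $\#\{b\in\widetilde{R}_\Afk\Ifk_y\cap K\tilde{j}:b^2=-\beta\}$, with $\widetilde{R}$ the maximal $A$-order in $\Dcal_\beta$ cut out inside $\Mat_2(R^{(\gamma)})$, $\gamma=-\beta/\alpha$. Finally, Remark~\ref{rem: Alg-geo.6}(3), applied over $\overline{\FF}_{v_0}$ (where, $v_0$ being non-split in $K$, the rank-one Drinfeld $O_K$-module is supersingular and its $\Dcal_\alpha$-endomorphism ring is precisely this $\widetilde{R}$), identifies $\widetilde{R}_\Afk\Ifk_y\cap K\tilde{j}$ with $\SP^{(\alpha)}_{\overline{\FF}_{v_0}}(L^\Afk,\phi^\Afk,\Ifk_y)\cdot u^{\Afk,\oplus 2}_{\Ifk_y}$ and the condition $b^2=-\beta$ with $\tilde{b}^2=\phi^\Afk_{-\beta}$. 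Summing over $[\Afk]$ and invoking Theorem~\ref{thm: Alg-geo.10}, the product $\ln q_{v_0}\cdot\bigl(\ord_{v_0}(\beta/\alpha\,N_{K/k}(\Ifk_y))+1\bigr)\cdot\sum_{[\Afk]}\#\{b:\tilde{b}^2=\phi^\Afk_{-\beta}\}$ is $\deg\mathbf{Z}(\Ifk_y,\beta)$ by definition, which yields the claim.

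The main obstacle — the only genuinely computational step — is checking that the local multiplicity $\ord_{v_0}(y_{v_0}^2\beta)+\delta_{v_0}$ appearing in Proposition~\ref{prop: Examp.9} equals the intersection length $\ord_{v_0}(\beta/\alpha\,N_{K/k}(\Ifk_y))$ governing $\deg\mathbf{Z}(\Ifk_y,\beta)$ via Proposition~\ref{prop: Alg-geo.9}. For this I would unwind the definitions of $\yfk$ and $\Dfk_{(\alpha)}$ from Lemma~\ref{lem: Examp.8}, computing $\ord_{v_0}\!\bigl(N_{K/k}(\yfk)\bigr)=2\ord_{v_0}(y_{v_0})$ and $\ord_{v_0}\!\bigl(N_{K/k}(\Dfk_{(\alpha)})\bigr)=\delta_{v_0}+\ord_{v_0}(\alpha)$ — in the inert case using that $\ord_{v_0}(\alpha)+\delta_{v_0}$ is even, so the floor in the definition of $\Dfk_{(\alpha)}$ is exact — whence $\ord_{v_0}(\beta/\alpha\,N_{K/k}(\Ifk_y))=\ord_{v_0}(\beta)-\ord_{v_0}(\alpha)+2\ord_{v_0}(y_{v_0})+\delta_{v_0}+\ord_{v_0}(\alpha)=\ord_{v_0}(y_{v_0}^2\beta)+\delta_{v_0}$. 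Together with the inert-case parity collapse noted above, this matches the two sides term by term; everything else in the argument is a formal concatenation of the cited statements.
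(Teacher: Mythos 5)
Your proposal is correct and follows essentially the same route as the paper: Proposition~\ref{prop: Examp.9} plus the bijection of Remark~\ref{rem: Examp.10}(1), then the identification of Remark~\ref{rem: Alg-geo.6}(3) and the degree formula of Theorem~\ref{thm: Alg-geo.10} (with Remark~\ref{rem: Alg-geo.11} isolating the contribution at $v_0$). The two verifications you single out — the inert-case parity collapse forced by $e_{v_0}$ even, and the identity $\ord_{v_0}(\beta/\alpha N_{K/k}(\Ifk_y))=\ord_{v_0}(y_{v_0}^2\beta)+\delta_{v_0}$ — are exactly the bookkeeping the paper leaves implicit, and your computations of them are right.
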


Let $\Xcal_{O_K}$ be the \lq\lq compactification\rq\rq\ of $\mathbf{M}_{O_K}$, that is, $\Xcal_{O_K}$ is the projective smooth model of the function field $H_{O_K}$. In particular, $\mathbf{M}_{O_K} = \Spec(O_{H_{O_K}})$ is an affine piece of $\Xcal_{O_K}$, and 
$$\Xcal_{O_K} \backslash \mathbf{M}_{O_K}  = \{\infty'_{\Afk} : [\Afk] \in \Pic(O_K)\},$$
where $\infty'_{\Afk}$, $[\Afk] \in \Pic(O_K)$, are the closed points lying above $\infty$.\\
%Suppose $v_0 = \infty$. Then $-\gamma:=\beta/\alpha \in N_{F/k}(F^{\times})$.

Now, given a pair $(y,\beta)$ where $y \in \AA_k^{\times}$ and $\beta \in k^{\times}$, we define the special cycle
$\mathbf{z}(y,\beta)$ on $\Xcal_{O_K}$ as follows:
\begin{itemize}
\item When $(y,\beta)$ satisfies the assumption in Corollary~\ref{cor: Alg-geo.12},
we let $\mathbf{z}(y,\beta):= \mathbf{Z}(\Ifk_y,\beta)$.
\item When $v_0 = \infty$, we have $\beta/\alpha \in N_{K/k}(K^\times)$. Set 
$$\lambda_{\infty}:= \frac{\big(3+(-1)^{f_{\infty}} +q_{\infty}(1-(-1)^{f_{\infty}})\big)}{2(1+q_{\infty})}$$
and
\begin{eqnarray}
\mathbf{z}(y,\beta) & := & f_{\infty}^{-1}\cdot \big(\ord_{\infty}(y_{\infty}^2 \beta)+ \delta_{\infty}+\lambda_{\infty}\big) \nonumber \\
 & & \!\!\!\!\!\!\! \cdot \sum_{[\Afk] \in \Pic(O_K)} \#\{x \in \Afk \bar{\Afk}^{-1} \overline{\Ifk}_y : N_{K/k}(x) = \beta/\alpha \} \cdot \infty'_{\Afk}, \nonumber
 \end{eqnarray}
which is a divisor (with rational coefficients) on $\Xcal_{O_K}$.
\item For a general pair $(y,\beta)$ where $y \in \AA_k^{\times}$ and $\beta \in k^{\times}$,
put $\mathbf{z}(y,\beta):= 0$ if there exists a place $v$ of $k$ such that $\ord_v(y_v^2 \beta) + \delta_v < 0$. Suppose $\ord_v(y_v^2 \beta) + \delta_v \geq 0$ for every place $v$, choose $a \in k^{\times}$ such that $\beta':=a^2 \beta \in A$ and $\Ifk_{y'}$, where $y':= y a^{-1}$, is an integral ideal of $O_K$. Thus $(y',\beta')$ satisfies the assumption in Corollary~\ref{cor: Alg-geo.12}. We then put $\mathbf{z}(y,\beta) := \mathbf{z}(y',\beta')$.
\end{itemize}

Finally, by Remark~\ref{rem: Examp.10} (2) and Corollary~\ref{cor: Alg-geo.12} we arrive at:

\begin{thm}\label{thm: Alg-geo.13}
For every $y \in \AA_k^{\times}$ and $\beta \in k^{\times}$, we have
$$\eta^{(\alpha),*}_\beta(y) = -\frac{\chi_F(y)|y|_{\AA_k}}{f_{\infty} \cdot \#\Pic(A)}\cdot  \deg \mathbf{z}(y,\beta).$$
%where $\mathbf{z}(y,\beta)$ is the algebraic cycle of $\Xcal_{O_F}$ introduced above.
\end{thm}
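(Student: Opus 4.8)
The plan is to prove Theorem~\ref{thm: Alg-geo.13} by reducing everything, via the equivariance relation $\eta^{(\alpha),*}_{\beta}(y) = \eta^{(\alpha),*}_{a^2\beta}(a^{-1}y)$ for $a \in k^\times$, to the three cases already isolated in the construction of $\mathbf{z}(y,\beta)$: (a) the vanishing case, where some place $v$ has $\ord_v(y_v^2\beta) + \delta_v < 0$; (b) the finite-place case $\Diff(\beta,\Ccal_K^{(\alpha)}) = \{v_0\}$ with $v_0 \neq \infty$; and (c) the case $\Diff(\beta,\Ccal_K^{(\alpha)}) = \{\infty\}$. In case (a), Proposition~\ref{prop: Examp.9} gives $\eta^{(\alpha),*}_\beta(y) = 0$, while $\mathbf{z}(y,\beta) := 0$ by definition, so both sides vanish. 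If $\#\Diff(\beta,\Ccal_K^{(\alpha)}) > 1$ then Proposition~\ref{prop: Fourier.6} forces $\eta^{(\alpha),*}_\beta(y) = 0$; I should remark that $\Diff$ is always odd (noted after its definition), so $\Diff = \emptyset$ cannot happen for $\det\beta \neq 0$, hence the remaining content is exactly the one-element case.

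For case (b), the argument is essentially an identification of two formulas that have already been computed. First, normalize: since the claim is invariant under $(y,\beta) \mapsto (a^{-1}y, a^2\beta)$, choose $a \in k^\times$ so that $\beta \in A$ and $\Ifk_y = \yfk^{-1}\overline{\Dfk}_{(\alpha)}^{-1}$ is an integral ideal of $O_K$; this is precisely the setup of Corollary~\ref{cor: Alg-geo.12}. That corollary already yields
$$\eta^{(\alpha),*}_\beta(y) = -\frac{\chi_K(y)|y|_{\AA_k}}{f_\infty \cdot \#\Pic(A)} \cdot \deg\mathbf{Z}(\Ifk_y,\beta),$$
and since $\mathbf{z}(y,\beta) := \mathbf{Z}(\Ifk_y,\beta)$ in this case, and $\deg$ of a $0$-dimensional cycle supported on the affine piece $\mathbf{M}_{O_K} \subset \Xcal_{O_K}$ agrees with the $\deg\mathbf{Z}$ defined before Theorem~\ref{thm: Alg-geo.10}, we are done. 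The real work here was already carried out in assembling Corollary~\ref{cor: Alg-geo.12}: matching the Whittaker-function computations of Lemmas~\ref{lem: Examp.4}–\ref{lem: Examp.7} and the theta integral of Lemma~\ref{lem: Examp.8} (rewritten via Remark~\ref{rem: Examp.10}) against the count $\#\{b \in \widetilde{R}_{\Afk}\Ifk_y \cap Kj : b^2 = -\beta\}$ from Remark~\ref{rem: Alg-geo.6}, and then against the local length computation of Proposition~\ref{prop: Alg-geo.9} that produces the factor $\ord_{v_0}(y_{v_0}^2\beta) + \delta_{v_0} + 1$ in $\deg\mathbf{Z}(\Ifk_y,\beta)$ via Theorem~\ref{thm: Alg-geo.10} and Remark~\ref{rem: Alg-geo.11}.

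The remaining step, case (c) where $v_0 = \infty$, is where I expect the main subtlety, because here the cycle $\mathbf{z}(y,\beta)$ lives on the boundary $\Xcal_{O_K} \setminus \mathbf{M}_{O_K} = \{\infty'_{\Afk}\}$ rather than on the coarse moduli scheme, so there is no moduli-theoretic interpretation via special morphisms — instead $\mathbf{z}(y,\beta)$ is defined by hand with the coefficient $\lambda_\infty = \big(3 + (-1)^{f_\infty} + q_\infty(1 - (-1)^{f_\infty})\big)/(2(1+q_\infty))$. The plan is to invoke Remark~\ref{rem: Examp.10}(2), which provides the bijection between $\{x \in \Afk\bar{\Afk}^{-1}\yfk^{-1}\Dfk_\beta^{-1} : N_{K/k}(x) = 1\}$ and $\{x \in \Afk\bar{\Afk}^{-1}\yfk^{-1}\Dfk_\alpha^{-1} : N_{K/k}(x) = \beta/\alpha\}$, then substitute this into the relevant branch of Proposition~\ref{prop: Examp.9} and carefully reconcile the arithmetic factor coming from the $\infty$-local Whittaker derivative and the inert/ramified dichotomy at $\infty$ (recall $\infty$ need not be unramified in $K$, hence $f_\infty$ and the parity data enter) with the definition of $\lambda_\infty$. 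The bookkeeping of whether $\infty$ is inert or ramified in $K$, and the corresponding value of the local Weil index and the extra factor $\deg\infty$, is the delicate part; everything else is the routine matching of previously-established formulas. Finally, $\deg\mathbf{z}(y,\beta)$ of a divisor with rational coefficients $\sum c_{\Afk}\infty'_{\Afk}$ is $\sum c_{\Afk}\cdot[\FF_{\infty'_{\Afk}} : \FF_q]\cdot\ln q$; since each $\infty'_{\Afk}$ lies above $\infty$ with residue degree $f_\infty \deg\infty$ over $\FF_q$, this produces exactly the factor needed to cancel the $f_\infty^{-1}$ in the definition of $\mathbf{z}(y,\beta)$ and recover the formula of Proposition~\ref{prop: Examp.9}, completing the proof.
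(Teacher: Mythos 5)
Your proposal is correct and follows essentially the same route as the paper, which deduces Theorem~\ref{thm: Alg-geo.13} directly from Corollary~\ref{cor: Alg-geo.12} (the finite-place case), Remark~\ref{rem: Examp.10}(2) together with Proposition~\ref{prop: Examp.9} (the case $\Diff(\beta,\Ccal_K^{(\alpha)})=\{\infty\}$, where $\lambda_\infty$ is engineered to absorb the inert/ramified dichotomy at $\infty$), and the case-by-case definition of $\mathbf{z}(y,\beta)$. In fact you spell out more of the reduction (the $a$-equivariance normalization, the vanishing cases, and the residue-degree bookkeeping at the boundary points $\infty'_{\Afk}$) than the paper itself does.
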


\subsubsection{Geometric interpretation of $\eta_0^{(\alpha),*}(y)$}
Under the assumption that $\ord_v(\alpha) + \delta_v$ is even for every place $v$ of $k$, 
the formula of $\eta_0^{(\alpha),*}(y)$ in Lemma~\ref{lem: Examp.2} becomes:
\begin{eqnarray}
\eta^{(\alpha),*}_{0}(y) &=& 2\chi_K(y)|y|_{\AA_k}L(0,\chi_K) \nonumber \\
&& \!\!\!\!\!\!\!\!\!\!\!\!\!\!\! \cdot \left[ \ln|y|_{\AA_k}-\Big([\FF_K:\FF_q](g_K-1)-(g_k-1)\Big)\ln q - \frac{L'(0,\chi_K)}{L(0,\chi_K)}
+  \frac{q_\infty^{(f_\infty-1)}-1}{q_\infty+1}\ln q_\infty\right]. \nonumber
\end{eqnarray}
On the other hand, let $\phi$ be a Drinfeld $A$-modules over $\bar{k}$ of rank $2$ with complex multiplication by $O_K$. The logarithmic derivative of $L(s,\chi_K)$ at $s=0$ is connected with the \lq\lq Taguchi height\rq\rq\ $\tilde{h}_{\text{Tag}}(\phi)$ of $\phi$ (cf.\ \cite[Corollary 0.2]{Wei3}):
$$
\frac{L'(0,\chi_K)}{L(0,\chi_K)} = \ln q \cdot \Big[ 2(g_k-1) - [\FF_K:\FF_k](g_K-1) + \frac{\deg \infty}{f_\infty} \Big] \ln q - \frac{\zeta_A'(0)}{\zeta_A(0)} - 2\tilde{h}_{\text{\rm Tag}}(\phi).
$$
Here $\zeta_A(s)$ is the zeta function of $A$:
$$\zeta_A(s) = \prod_{v \neq \infty} \frac{1}{1- q_v^{-s}}, \quad  \re(s)>1.$$
Recall the formula of $L(0,\chi_K)$ in Remark~\ref{rem: Examp.3} (2):
$$L(0,\chi_K) = \frac{\#\Pic(O_K)}{f_\infty \#\Pic(A)}.$$
Therefore we obtain that

\begin{lem}\label{lem: Alg-geo.14}
\begin{eqnarray}
\eta^{(\alpha),*}_{0}(y) &=& \frac{2\chi_K(y)|y|_{\AA_k}\#\Pic(O_K)}{f_\infty \#\Pic(A)} \nonumber \\
&& \!\!\!\!\!\!\!\!\!\!\!\!\!\!\! \cdot \left[ \ln|y|_{\AA_k}  - 2\tilde{h}_{\text{\rm Tag}}(\phi)  -(g_k-1)\ln q - \frac{\zeta_A'(0)}{\zeta_A(0)} 
+  \frac{(-1)^{f_\infty} q_\infty +1 - 2^{f_\infty}}{f_\infty(q_\infty+1)}\ln q_\infty\right]. \nonumber
\end{eqnarray}
\end{lem}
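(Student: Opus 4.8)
The plan is to deduce Lemma~\ref{lem: Alg-geo.14} from the simplified expression for $\eta^{(\alpha),*}_0(y)$ recorded immediately above its statement, by two substitutions: the class number formula $L(0,\chi_K)=\#\Pic(O_K)/\big(f_\infty\,\#\Pic(A)\big)$ of Remark~\ref{rem: Examp.3}~(2), and the formula of \cite[Corollary 0.2]{Wei3} expressing $L'(0,\chi_K)/L(0,\chi_K)$ in terms of the Taguchi height $\tilde{h}_{\text{Tag}}(\phi)$ and $\zeta_A'(0)/\zeta_A(0)$. First I would use the class number formula to rewrite the prefactor $2\chi_K(y)|y|_{\AA_k}L(0,\chi_K)$ as $\dfrac{2\chi_K(y)|y|_{\AA_k}\#\Pic(O_K)}{f_\infty\,\#\Pic(A)}$, which is precisely the leading factor in the claimed identity; it then remains to show that the bracketed quantity equals
\[
\ln|y|_{\AA_k}-2\tilde{h}_{\text{Tag}}(\phi)-(g_k-1)\ln q-\frac{\zeta_A'(0)}{\zeta_A(0)}+\frac{(-1)^{f_\infty}q_\infty+1-2^{f_\infty}}{f_\infty(q_\infty+1)}\ln q_\infty .
\]

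Next I would substitute the Taguchi-height formula in place of $-L'(0,\chi_K)/L(0,\chi_K)$ inside the bracket. Besides the desired $-2\tilde{h}_{\text{Tag}}(\phi)$ and $-\zeta_A'(0)/\zeta_A(0)$ summands, this produces ``geometric'' terms that must be reconciled with the term $-\big([\FF_K:\FF_q](g_K-1)-(g_k-1)\big)\ln q$ already present. The key point is that $\FF_k=\FF_q$, so $[\FF_K:\FF_q]=[\FF_K:\FF_k]$; hence the two contributions proportional to $(g_K-1)$ cancel, the ones proportional to $(g_k-1)$ collapse to $-(g_k-1)\ln q$, and what remains beyond the Taguchi and zeta contributions is a single extra summand $-\dfrac{\deg\infty}{f_\infty}\ln q=-\dfrac{\ln q_\infty}{f_\infty}$ (using $q_\infty=q^{\deg\infty}$).

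Finally I would combine this leftover summand with the local term at $\infty$ inherited from the simplified formula, $\dfrac{q_\infty^{\,f_\infty-1}-1}{q_\infty+1}\ln q_\infty$, obtaining $\dfrac{f_\infty\big(q_\infty^{\,f_\infty-1}-1\big)-(q_\infty+1)}{f_\infty(q_\infty+1)}\ln q_\infty$. Since $\infty$ is non-split in $K$ we have $f_\infty\in\{1,2\}$, and a direct check in the ramified case $f_\infty=1$ and in the inert case $f_\infty=2$ shows that the numerator equals $(-1)^{f_\infty}q_\infty+1-2^{f_\infty}$ in both cases. Assembling the pieces yields the asserted formula.

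There is no genuine obstacle here: the argument is a bookkeeping exercise. The only points requiring attention are the normalizations — identifying $[\FF_K:\FF_q]$ with $[\FF_K:\FF_k]$, translating $\ln q_\infty$ into $\deg\infty\cdot\ln q$ and $f_\infty$ into the explicit splitting type of $\infty$, and verifying that the sign and normalization conventions of \cite{Wei3} for $\tilde{h}_{\text{Tag}}(\phi)$, for $L(s,\chi_K)$ and for $\zeta_A(s)$ agree with those fixed earlier in the paper (the Hurwitz-type identity already used in the proof of Lemma~\ref{lem: Examp.2} being the relevant consistency check).
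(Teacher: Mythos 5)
Your proposal is correct and follows essentially the same route as the paper: it substitutes the class-number formula of Remark~\ref{rem: Examp.3}~(2) and the Taguchi-height formula of \cite[Corollary 0.2]{Wei3} into the simplified expression derived from Lemma~\ref{lem: Examp.2}, and your explicit check that $\frac{q_\infty^{f_\infty-1}-1}{q_\infty+1}-\frac{1}{f_\infty}=\frac{(-1)^{f_\infty}q_\infty+1-2^{f_\infty}}{f_\infty(q_\infty+1)}$ for $f_\infty\in\{1,2\}$ just makes explicit the arithmetic the paper leaves implicit. One remark: the signs you use for the $\zeta_A'(0)/\zeta_A(0)$ and $2\tilde{h}_{\text{Tag}}(\phi)$ contributions are the ones required for the stated lemma, whereas the paper's displayed transcription of \cite[Corollary 0.2]{Wei3} carries them with the opposite sign (an apparent typo there, correctly caught by the consistency check you propose).
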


%This gives a geometric meaning of the constant Fourier coefficient $\eta_0^{(\alpha),*}(y)$.

\subsection*{Acknowledgements} 
The author is grateful to Jing Yu for his steady interest and encouragements.
Part of this work was carried out while the author was visiting Institut des Hautes \'Etudes Scientifiques in Bures-sur-Yvette, France. He wishes to thank the institute for kind hospitality and nice working conditions.

\end{document}